\def\myfnt{\ifx\protect\@typeset@protect\expandafter\footnote\else\expandafter\@gobble\fi}
\DeclareFontFamily{OT1}{pzc}{}
\DeclareFontShape{OT1}{pzc}{m}{it}{<-> s * [1.15] pzcmi7t}{}
\DeclareMathAlphabet{\mathpzc}{OT1}{pzc}{m}{it}
\DeclareSymbolFont{bbold}{U}{bbold}{m}{n}
\DeclareSymbolFontAlphabet{\mathbbold}{bbold}
\newtheorem{theorem}{Theorem}[section]
\newtheorem{theoremint}{Theorem}
\newtheorem{deff}[theorem]{Definition}
\newtheorem{example}[theorem]{Example}
\newtheorem{lemma}[theorem]{Lemma}
\newtheorem{cor}[theorem]{Corollary}
\newtheorem{prop}[theorem]{Proposition}
\newtheorem{rem}[theorem]{Remark}
\newcommand{\bqa}{\begin{align}}
\newcommand\eqa {\end{align}}
\newcommand{\beq}{\begin{align}}
\newcommand{\beqn}{\begin{align}\nonumber}
\newcommand{\eeq}{\end{align}}
\newcommand{\be}{\begin{array}}
\newcommand{\ee}{\end{array}}
\newcommand{\la}{\langle}
\newcommand{\ra}{\rangle}
 \newcommand{\exd}{\mathrm{d}}
   \newcommand\vf\varphi
 \newcommand{\K}{\mathrm{K}}
 \newcommand{\s}{\mathrm{s}}
 \newcommand{\ttt}{\mathrm{t}}
 \newcommand{\m}{\mathrm{m}}
 \newcommand{\iii}{\mathrm{i}}
 \newcommand{\uuu}{\mathrm{u}}
\newcommand{\DD}{\mathbb{D}}
 \newcommand{\cC}{{\mathcal C}}
 \newcommand{\cO}{{\mathcal{O}}}
 \newcommand{\cL}{\mathcal{L}}
 \newcommand{\cG}{{\mathcal{G}}}
 \newcommand{\cF}{{\mathcal{F}}}
 \newcommand{\C}{{\mathbb C}}
 \newcommand{\R}{{\mathbb R}}
 \newcommand{\A}{{\mathbb A}}
 \newcommand{\N}{{\mathbb N}}
 \newcommand{\OO}{{\mathbb O}}
 \newcommand{\HH}{{\mathbb H}}
  \def\H{\mathbb H}
   \def\a{\alpha}
   \def\e{\epsilon}
\title[Minimal Lie groupoid and $\infty-$algebroid of the SOHF]{The minimal Lie groupoid and infinity algebroid \\ of the singular octonionic Hopf foliation}
\author{Hadi Nahari}
\email{hadi.nahari@u-pec.fr, stroblATmath.univ-lyon1.fr}
\author{Thomas Strobl}
\address{H.N.: Laboratoire d’Analyse et de Mathématiques Appliquées (LAMA), Universit\'e Paris Est Cr\'eteil,
61 Av. du Général de Gaulle, 94000 Créteil, France}
\address{T.S.: Institut Camille Jordan, Universit\'e Claude Bernard Lyon 1, Universit\'e de Lyon,
43 boulevard du 11 novembre 1918, 69622 Villeurbanne Cedex, France \qquad \& \newline Erwin Schr\"odinger International Institute for Mathematics and Physics, University of Vienna, Boltzmanngasse 9A, 1090 Wien, Austria}
\date{\today}
\begin{document}
\vspace{50mm}
\begin{abstract}

The famous singular leaf decomposition $\cL_{OH}$ of $\R^{16}\cong \OO^2$ induced by the Hopf construction for octonions $\OO$ has no known Lie group action generating it.
In this article we construct a  $\mathrm{G}_2$-equivariant Lie \emph{groupoid} $\cG \Rightarrow \OO^{2}$ whose orbits coincide with $\cL_{OH}$. Its Lie algebroid $E=\mathrm{Lie}(\cG)$ is of the form $\OO^4 \to \OO^2$ with polynomial structure functions. Its sheaf of sections induces a singular foliation $\cF_{OH} := \rho(\Gamma(E))$ on $\OO^{2}$, which we call the singular octonionic Hopf foliation (SOHF). $\cF_{OH}$ is shown to be maximal among all singular foliations $\cF$ generating $\cL_{OH}$---in the polynomial, the real analytic, as well as in the smooth setting.

\vskip 2mm \noindent  We extend $E$ to  a Lie $3$-algebroid, which is a minimal length representative of the universal Lie $\infty-$algebroid of the SOHF. 
This permits to prove that $E$ is the minimal rank Lie algebroid and that $\cG$  the lowest dimensional Lie groupoid which generate  the SOHF.

\vskip 2mm \noindent The leaf decomposition $\cL_{OH}$ is one of the few known examples of a singular Riemannian foliation in the sense of Molino which cannot be generated by local isometries (local non-homogeneity).
We improve this result by showing that any smooth singular foliation $\cF$ inducing $\cL_{OH}$ cannot be even Hausdorff Morita equivalent to a singular foliation $\cF_M$ on a Riemannian manifold $(M,g)$ generated by local isometries. Furthermore, we show that there is no real analytic singular foliation $\cF$ generating  $\cL_{OH}$ which  turns $(\R^{16}, g_{st}, \cF)$ into a module singular Riemannian foliation as defined in \cite{NS24}.

\end{abstract}

\maketitle
\tableofcontents
\newpage
\section{Introduction}
\addtocontents{toc}{\protect\setcounter{tocdepth}{1}}

\vskip 2mm\noindent Hopf fibrations are celebrated geometric objects which can be constructed for every normed division algebra. Hopf fibrations associated to the complex numbers and quaternions are well-known examples in differential geometry and mathematical physics \cite{H31, U03}. Due to the non-associative nature of octonions and the consequent challenges in calculations, the octonionic Hopf fibration on \(\mathrm{S}^{15} \subset \OO^2 \cong \R^{16}\) has been less studied in the literature. However, it possesses remarkable properties, highlighting the importance of non-associativity in its geometry \cite{OPPV13, BC21}. To construct the octonionic Hopf fibration, one has to start with the singular octonionic Hopf leaf decomposition $\cL_{OH}$.  Recall that the \emph{octonionic lines} in $\OO^2\cong\R^{16}$ \cite{GWZ86,OPPV13} are defined as

\begin{equation*}
         l_m:=\left\{(x,m\!\cdot\!x)\in \OO^2\,\colon\, x\in \OO\right\}.
\end{equation*}
for every $m\in \OO$, called the slope of the line, together with the octonionic line given by $l_{\infty}:=\left\{(0,x)\in \OO^2\,\colon\, \, x\in \OO\right\}$.

\begin{deff}
The \emph{singular Hopf leaf decomposition} $\cL_{OH}$ of $\OO^2$ is defined as the family of the leaves
\begin{align*}
    L_{m,r}&:=l_m \cap \mathrm{S}(r)\:\quad \forall  m\in \OO,\nonumber \\
    L_{\infty,r}&:=l_\infty \cap \mathrm{S}(r)\,,
\end{align*}
together with the origin in $\OO^2$. Here, $\mathrm{S}(r)$ is the sphere of radius $r>0$ in $\OO^2$.
\end{deff}

\vskip 2mm\noindent In particular, the leaves $L_{m,1}$ and $L_{\infty,1}$ correspond to the fibers of the octonionic Hopf fibration. 
In contrast to the complex and quaternionic Hopf fibrations, the octonionic Hopf fibration is \emph{non-homogeneous}, meaning there is no isometric Lie group action on the standard $\mathrm{S}^{15}$ that generates these fibers \cite{GWZ86, L93}. As a result, the leaf decomposition $\mathcal{L}_{OH}$, which forms a singular Riemannian foliation with respect to the standard metric on $\mathbb{R}^{16}$ in the sense of Molino \cite{M98}, is locally non-homogeneous near the origin \cite{MR19}.

\vskip 2mm\noindent 
The notion of module singular Riemannian foliations was introduced and developed in \cite{NS24} by adapting Molino's classical definition of singular Riemannian foliations \cite{M98} to the modern framework of singular foliations, defined as locally finitely generated involutive submodules of compactly supported vector fields \cite{AS09}: for a singular foliation $\cF \subset \mathfrak{X}_c(M)$ on a Riemannian manifold $(M,g)$, we have
\begin{deff}
    The triple $(M,g,\cF)$ is a \emph{module singular Riemannian} foliation if, for every vector field $X\in \cF$, we have

    \begin{equation}\label{msrfint}
        \cL_X g \in \Omega^1(M) \odot g_\flat(\cF)\,.
    \end{equation}
\end{deff}
Here, $g_\flat \colon TM \to T^*M$ is the musical isomorphism of vector bundles, given by $v \mapsto g(v, \cdot)$ for every vector $v\in TM$.

\vskip 2mm\noindent 
\vskip 2mm\noindent It was shown that the leaf decomposition of every module singular Riemannian foliation defines a singular Riemannian foliation in the sense of Molino. Moreover, for regular foliations, both definitions are equivalent. However, the following question had remained open:

\vskip 2mm \noindent \textbf{Question:} Let $\cF$ be a singular foliation on a Riemannian manifold $(M, g)$ such that its leaf decomposition defines a singular Riemannian foliation in the sense of Molino \cite{M98}. Is it possible to construct a module singular Riemannian foliation $(M, g, \cF')$ that has the same leaf decomposition as $\cF$?

\vskip 2mm \noindent We claim here that $\cL_{OH}$ partially provides a counterexample to this question:

\begin{theoremint}\label{ceint}
    Let $\cF$ be any singular foliation on the real analytic Riemannian manifold $(\OO^2,\exd s^2)$ with $\cL_{OH}$ as its leaf decomposition.  Then, the triple $(\OO^2, \exd s^2,\mathcal{F})$ induces a singular Riemannian foliation in the sense of Molino, but it is not a module singular Riemannian foliation.
\end{theoremint}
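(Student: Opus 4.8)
The plan is to prove the two assertions separately; the first is soft, and the second goes by contradiction. For the Molino assertion there is nothing beyond bookkeeping: being a singular Riemannian foliation in the sense of Molino is a property of the partition $\cL_{OH}$ together with $\exd s^2$ — a geodesic orthogonal to one leaf stays orthogonal to all leaves — which is exactly the classical fact, recalled in the Introduction, that $\cL_{OH}$ is a (locally non‑homogeneous) singular Riemannian foliation for the flat metric \cite{M98,GWZ86,L93,MR19}, and it does not depend on the chosen generating module. So I assume, towards a contradiction, that $(\OO^2,\exd s^2,\cF)$ is a module singular Riemannian foliation — i.e.\ \eqref{msrfint} holds for every $X\in\cF$ — and aim to produce a forbidden vector field.

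\emph{Step 1 — a vector field with nonzero linear part.} Since $\{0\}$ is the only singular leaf of $\cL_{OH}$, every $X\in\cF$ vanishes at the origin, hence so does every element of $g_\flat(\cF)$ and of $\Omega^1(\OO^2)\odot g_\flat(\cF)$. If, contrary to what I want, $\mathrm{lin}_0X=0$ for all $X\in\cF$ — i.e.\ every nonzero $X\in\cF$ vanishes to order $\ge 2$ at $0$ — I pick $X\in\cF\setminus\{0\}$ of minimal vanishing order $k$ at $0$; here $2\le k<\infty$, and the finiteness is the one place where real analyticity of $\cF$ (on the connected manifold $\OO^2$) is used. The leading Taylor term of $X$ is then a nonzero homogeneous vector field of degree $k\ge 2$, hence not a Killing field of the flat metric (a nonzero homogeneous polynomial Killing field has degree $\le 1$), so the leading term of $\cL_X(\exd s^2)$ is a nonzero homogeneous symmetric $2$‑tensor of degree $k-1$, and $\cL_X(\exd s^2)$ vanishes to order exactly $k-1$ at the origin. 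But every element of $\Omega^1(\OO^2)\odot g_\flat(\cF)$ vanishes to order $\ge k$ there, since each $g_\flat(Y)$, $Y\in\cF$, does — contradicting \eqref{msrfint}. Hence there is $X\in\cF$ with $A:=\mathrm{lin}_0X\neq 0$.

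\emph{Step 2 — ruling out $A$.} First, $x\mapsto Ax$ is everywhere tangent to $\cL_{OH}$: along a ray $\{tq:t>0\}$ all points share one octonionic line and slope, so the leaf tangent space $T_{tq}L=:W_q$ does not depend on $t$, and from $X(tq)=t\,Aq+O(t^2)\in W_q$ one gets $Aq\in W_q$ by dividing by $t$ and letting $t\to 0^+$. It then remains to show that the zero field is the only linear vector field on $\OO^2$ tangent to $\cL_{OH}$. Writing $A$ in block form $A=\left(\begin{smallmatrix}P&Q\\ R&S\end{smallmatrix}\right)$ on $\OO^2=\OO\oplus\OO$, tangency to the punctured line $l_m\setminus\{0\}$ at $(x_1,mx_1)$ is the system $Rx_1+S(mx_1)=m\bigl(Px_1+Q(mx_1)\bigr)$ and $\langle Px_1+Q(mx_1),x_1\rangle=0$ for all $x_1\in\OO$; running $m$ over $\OO$ and matching homogeneous degrees in $m$ forces in turn $R=0$; then $m\bigl(Q(mx_1)\bigr)=0$, hence $Q(mx_1)=0$ for $m\neq 0$ (as $\OO$ is a division algebra), hence $Q=0$; then $S(mx_1)=m(Px_1)$, whence $S=P$ (take $m=1$) and $Pm=mp_0$ with $p_0:=P(1)$ (take $x_1=1$), so $P=S=R_{p_0}$ is right multiplication by $p_0$; the leftover identity $(mx_1)p_0=m(x_1p_0)$ for all $m,x_1\in\OO$ forces $p_0$ into the nucleus $\R$ of $\OO$, and then the orthogonality condition reads $p_0|x_1|^2=0$, so $p_0=0$ and $A=0$. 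This contradicts Step 1, and the proof is complete.

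I expect the one genuinely delicate point to be the octonionic computation in Step 2: one must check that the lines $l_m$ with $m\in\OO$ already force $A=0$ (so that $l_\infty$ and the point $\{0\}$ contribute nothing) and keep track of the associators correctly. This is precisely where non‑associativity bites — for the complex and quaternionic Hopf foliations the diagonal right translations $x\mapsto(x_1p_0,x_2p_0)$ \emph{do} preserve every line $l_m$ and provide Killing generators, whereas over $\OO$ they fail to do so because $(mx_1)p_0\neq m(x_1p_0)$ in general. I would also flag that real analyticity is used only in Step 1, to rule out a generating module all of whose elements are infinitely flat at the origin (for instance $e^{-1/|x|^{2}}\cF_{OH}$ in the $C^\infty$ category), so extending the statement to the smooth setting would require an additional argument.
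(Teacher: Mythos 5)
Your argument is correct, and it rests on the same three pillars as the paper's proof---real analyticity to control Taylor expansions at the origin, the rigidity of flat-metric Killing fields, and the non-existence of nonzero linear vector fields tangent to $\cL_{OH}$---but you assemble them in a genuinely different order. The paper first shows, via its characterization Lemma \ref{lemchar} and degree matching, that the linear Taylor part of any $X\in\cF$ is itself tangent to $\cL_{OH}$, kills it with Lemma \ref{nolin}, and then runs a degree-by-degree recursion: once every element of $\cF$ vanishes to order $\ge k$ at the origin, the module condition forces $\cL_{X_k}g_{st}=0$, hence $X_k=0$ by Killing rigidity, so analyticity eventually gives $X=0$ for every $X\in\cF$, contradicting the leaf decomposition. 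You instead invoke the module condition only once, through the minimal-vanishing-order comparison (order of $\cL_X(\exd s^2)$ exactly $k-1$ versus order $\ge k$ for everything in $\Omega^1\odot g_\flat(\cF)$), to force some element of $\cF$ to have a nonzero linear part $A$, and then contradict the linear-field statement; your scaling-limit argument ($\tfrac1t X(tq)\to Aq\in W_q$, using that $T_{tq}L$ is $t$-independent along rays) replaces the paper's polynomial-identity route to tangency of the linear part, and your block-matrix computation (degree matching in $m$, forcing $R=Q=0$, then $P=S=R_{p_0}$ with $p_0$ in the nucleus $\R$, then orthogonality killing $p_0$) is an independent and correct re-proof of Lemma \ref{nolin}, equivalent to the paper's use of the associator identity. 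The trade-off: your version avoids the recursion and isolates precisely where analyticity enters (finiteness of the vanishing order), while the paper's version reuses Lemmas \ref{lemchar} and \ref{nolin} already established and obtains the stronger intermediate statement that a module structure would force $\cF=0$. Your handling of the Molino half by reduction to the classical fact about $\cL_{OH}$ is also fine and matches the paper, which derives it from regularity of the Hopf fibration on $\mathrm{S}^{15}$ together with Molino's homothetic transformation lemma.
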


\vskip 2mm \noindent In addition, in this paper, we refine the classical result on the non-homogeneity of $\cL_{OH}$ as follows:

\begin{theoremint}\label{nonhomint}
    Let $\cF_0$ be \emph{any} singular foliation on $\OO^2$ having $\cL_{OH}$ as its leaf decomposition. Then $(\OO^2,\cF_0)$ is not Hausdorff Morita equivalent to any singular foliation $\cF$ on some Riemannian manifold $(M,g)$, whose leaf decompoistion is locally given by orbits of some isometric Lie group action. 
\end{theoremint}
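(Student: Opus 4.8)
The plan is to exploit the fact that Hausdorff Morita equivalence (HME) preserves, for corresponding leaves, the germ of the transverse foliation, and to feed this into the slice theorem for isometric actions together with a rigidity property of the octonionic pencil of lines. So assume, for a contradiction, that $(\OO^2,\cF_0)$ is HME to a singular foliation $\cF$ on a Riemannian manifold $(M,g)$ whose leaves are locally the orbits of isometric Lie group actions. By the work of Garmendia and Zambon on Hausdorff Morita equivalence of singular foliations, HME induces a homeomorphism of leaf spaces under which corresponding leaves have the same codimension and isomorphic germs of transverse foliations. Let $L_0\subseteq M$ be the leaf corresponding to the unique singular leaf $\{0\}$ of $\cL_{OH}$; it has codimension $16$, and, as $\{0\}$ is a single point, its transverse foliation is simply the germ of $\cF_0$ at $0$.

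Next I would describe the transverse foliation of $\cF$ at $L_0$. Fix $p\in L_0$; near $p$ the leaves of $\cF$ are the orbits of an isometric action of a Lie group $G$, and after passing to the effective quotient the isotropy group $G_p$ embeds into the compact group $\mathrm{O}(T_pM)$. By the slice theorem and the $G_p$-equivariance of $\exp_p$, the transverse foliation of $\cF$ at $L_0$ is isomorphic to the germ at $0$ of the orbit foliation of a \emph{linear} action of a Lie algebra $\mathfrak{k}\subseteq\mathfrak{so}(\nu_p)$ on the normal slice representation $\nu_p\cong\R^{16}$. Combining this with the previous paragraph, we obtain a germ of diffeomorphism $\phi\colon(\R^{16},0)\to(\R^{16},0)$ with $\phi_*\cF_0=\cF_{\mathfrak{k}}$, where $\cF_{\mathfrak{k}}$ denotes the singular foliation generated by the linear vector fields in $\mathfrak{k}$; in particular $\phi$ maps the $7$-dimensional leaves $L_{m,r}$ of $\cL_{OH}$, which lie in every neighbourhood of $0$, to $7$-dimensional leaves of $\cF_{\mathfrak{k}}$, so that $\dim\mathfrak{k}\ge 7$.

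The decisive algebraic input is a rigidity property of the octonionic pencil: the only $T\in\End_\R(\OO^2)$ preserving every octonionic line $l_m$, $m\in\OO\cup\{\infty\}$, is a scalar multiple of the identity. Indeed, $T$ preserves $l_0=\OO\oplus 0$ and $l_\infty=0\oplus\OO$, hence is block diagonal, $T=(T_0,T_\infty)$; preserving $l_m=\{(x,m\acts x)\}$ forces $T_\infty\circ L_m=L_m\circ T_0$, where $L_m(x)=m\acts x$; taking $m=1$ gives $T_\infty=T_0$, so $T_0$ commutes with every left multiplication $L_m$; evaluating this at $1$ shows $T_0$ is right multiplication by $c:=T_0(1)$, whence $(m\acts x)\acts c=m\acts(x\acts c)$ for all $m,x\in\OO$, i.e.\ $c$ lies in the nucleus of $\OO$, which equals $\R$; thus $T_0=cI$ and $T=cI_{16}$. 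Now fix $m$: the set $l_m\cap B_\varepsilon$ is a union of leaves of $\cF_0$ (the point $\{0\}$ together with the $L_{m,r}$, $r<\varepsilon$), so $\Sigma_m:=\phi(l_m\cap B_\varepsilon)$ --- an embedded $8$-dimensional submanifold through $0$ with $T_0\Sigma_m=d\phi_0(l_m)=:V_m$ --- is a union of integral submanifolds of $\cF_{\mathfrak{k}}$, and hence every $X\in\mathfrak{k}$ is tangent to $\Sigma_m$. Since a linear vector field that is tangent to a submanifold through the origin preserves that submanifold's tangent space at the origin, we get $X(V_m)\subseteq V_m$ for every $X\in\mathfrak{k}$ and every $m$. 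Setting $A:=d\phi_0$, so that $V_m=A(l_m)$, this says $A^{-1}XA$ preserves every octonionic line; by the rigidity statement $A^{-1}XA=\lambda I_{16}$ for some $\lambda\in\R$, so $X=\lambda I_{16}$, and since $X\in\mathfrak{so}(16)$ this forces $\lambda=0$. Hence $\mathfrak{k}=0$, contradicting $\dim\mathfrak{k}\ge 7$. This proves the theorem.

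The only genuinely delicate step is the second one: promoting ``the leaves are locally orbits of an isometric action'' to ``the transverse foliation at $L_0$ is a \emph{linear} orthogonal orbit foliation on $\R^{16}$''. This rests on the slice theorem, which is cleanest for \emph{proper} isometric actions, so one must either reduce to that situation or use directly that the orbits, being the leaves of an honest singular foliation, already admit a linearizable normal form near $L_0$. Everything afterwards is formal, except for the single octonionic fact that the nucleus of $\OO$ equals $\R$ (equivalently, that the left multiplications generate $\End_\R(\OO)$); this is precisely what fails for $\C$ and $\HH$, and it is what makes the octonionic Hopf foliation rigid in a way its complex and quaternionic analogues are not. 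Note, finally, that the argument reproves and strengthens the classical local non-homogeneity of $\cL_{OH}$, with no appeal to the classification of transitive actions on spheres.
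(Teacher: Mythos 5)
Your proposal is correct, and its first half coincides with the paper's own argument: both use the Morita invariants of Theorem \ref{hme} to transport the problem to the transversal at the leaf corresponding to the origin (necessarily of codimension $16$), and both use the isometric linearization of the isotropy action on a slice $S_q=\exp_q(\nu_q^\epsilon)$ to reduce to a linear orthogonal action on $\R^{16}$ whose orbit decomposition is the image of $\cL_{OH}$ under a non-isometric diffeomorphism germ $\phi$. Where you genuinely diverge is the endgame: the paper at this point quotes Lu's classification \cite{L93} to rule out an orthogonal action with only $7$-dimensional leaves off the origin, whereas you prove a self-contained rigidity lemma --- any $T\in\End_{\R}(\OO^2)$ preserving all octonionic lines is a real scalar, because the nucleus of $\OO$ is $\R$ --- and apply it to $A^{-1}XA$ with $A=d\phi_0$ and $X\in\mathfrak{k}\subseteq\mathfrak{so}(16)$, via the elementary fact that a linear vector field tangent to an embedded submanifold through $0$ preserves its tangent space at $0$. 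This buys two things: it removes all dependence on the classification of homogeneous foliations of spheres, and it is insensitive to the metric distortion introduced by $\phi$; note that the paper's Lemma \ref{nolin} would not suffice here, since $A^{-1}XA$ need not be tangent to the distorted spheres, so your ``lines-only'' rigidity statement (with skewness of $X$ itself killing the scalar) is exactly the strengthening the Morita setting requires. Two small caveats: the identity $\phi_*\cF_0=\cF_{\mathfrak{k}}$ should only be asserted at the level of leaf decompositions (Hausdorff Morita equivalence identifies $\phi_*\cF_0$ with the slice module $\iota_{S_q}^{-1}\cF$, which need not coincide with the module generated by the linear Killing fields), though your argument uses nothing beyond leaves, so this is harmless; and the slice-theorem step you flag --- that the slice-foliation leaves are isotropy orbits --- is used with the same degree of implicitness in the paper itself, and can in any case be weakened, since for your contradiction it suffices that $\mathfrak{k}\neq 0$, which follows because a trivial slice representation would force all orbits through $S_q$ to have dimension at most $\dim L_q$, contradicting the presence nearby of leaves of codimension $9$.
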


\vskip 2mm\noindent A key step in proving Theorems \ref{ceint} and \ref{nonhomint} is the following central lemma, which characterizes the vector fields tangent to the leaves of $\cL_{OH}$: A vector field  $\renewcommand{\arraystretch}{0.7}\begin{pmatrix}u\\v\end{pmatrix}\in \mathfrak{X}(\OO^2)$  with $u,v\in C^\infty(\OO^2,\OO)$ is tangent to the leaves of $\cL_{OH}$, if and only if, for all  $(x,y)\in \OO^2\cong\mathbb{R}^{16}$
\begin{align}
         u\!\cdot \!\overline{y}+x\!\cdot\! \overline{v}&= \label{conditionint}0 \, \: , \\
         \langle x,u \rangle = \la y,v\ra&=0 \: . \label{orthogonalint}
\end{align}
 This lemma, in particular, implies that there are no linear vector fields tangent to $\cL_{OH}$, offering an alternative proof of the classical non-homogeneity result.

\vskip 2mm\noindent The local non-homogeneity of $\cL_{OH}$ implies the non-existence of an isometric Lie group action around the origin in $(\R^{16}, g_{st})$ that induces $\cL_{OH}$. Moreover, more generally, there is no known (unconstrained) Lie group action around the origin that generates these leaves. 
In this paper we construct a Lie groupoid $\cG\Rightarrow\OO^2$ which has $\cL_{OH}$ as its orbits---to the best of our knowledge, such a Lie groupoid has not been known as of now.

\vskip 2mm\noindent Its construction is based on considering the \emph{rescaling function} $\lambda\colon \OO^2\times\OO^2\to \R$ defined by the formula
    \begin{equation}
\label{lambdaint}
\lambda(F,G,x,y)=\sqrt{1+2\left(\la x,F\ra+\la y,G\ra+\la x\!\cdot\!\overline{y},F\!\cdot\!\overline{G}\ra\right)+\|x\|^2\,\|F\|^2+\|y\|^2\,\|G\|^2}\,,
\end{equation}
for all $(F,G)$ and $(x,y)\in \OO^2$. Then:
\begin{deff} \label{defgint} The arrow manifold of  $\cG\Rightarrow \OO^2$ is 
\begin{equation*}
\cG:=\OO^2\times\OO^2\setminus \left\{(F,G,x,y)\in\OO^2\times\OO^2\,\colon\,\lambda(F,G,x,y)= 0\right\}.
\end{equation*} 
Let $g \equiv(F,G,x,y)\in \cG$. Then $\s(g)=(x,y)$ and 
    \begin{equation*}
    \ttt(g)=\dfrac{1}{\lambda(g)}\left(x+\|x\|^2\,F+(x\!\cdot\!\overline{y})\!\cdot\!G\: ,\:y+\|y\|^2\,G+(y\!\cdot\!\overline{x})\!\cdot\!F\right) \, .
\end{equation*}
 The product of composable arrows is defined by:
\begin{equation*}  (F',G',x,',y')\!\cdot\!(F,G,x,y):=(F+\lambda(g)\!\cdot\!F',G+\lambda(g)\!\cdot\!G',x,y)  \, .
\end{equation*}
The unit map is $\uuu\colon \OO^2\to \cG\, , \; (x,y) \mapsto (0,0,x,y)$ and for the inverse one has 
\begin{equation*}
     (F,G,x,y)^{-1}=(-F/\lambda(g),-G/\lambda(g),\ttt(g)) \,.
\end{equation*}

\end{deff}

\vskip 2mm\noindent We then prove 

\begin{theoremint}
    %The above definition of 
    $\cG$ is a $\mathrm{G}_2$-equivariant Lie groupoid, whose orbits are the leaves in $\cL_{OH}$.
\end{theoremint}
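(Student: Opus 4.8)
The plan is to verify the data of Definition~\ref{defgint} directly: first that $\cG$ is a Lie groupoid, then that its orbits are the $\cL_{OH}$-leaves, then $\mathrm{G}_2$-equivariance. Smoothness is immediate, since $\lambda^2$ is a polynomial on $\OO^2\times\OO^2\cong\R^{32}$, so that $\cG=\{\lambda^2>0\}$ is an open subset --- a smooth $32$-dimensional manifold on which $\lambda$ is smooth and strictly positive --- and every structure map, being rational in the Cartesian coordinates and in $\lambda$, is smooth. The source $\s$ is a linear projection, hence a surjective submersion, so the space of composable pairs is a submanifold of $\cG\times\cG$ on which $\m$ is defined; once the axioms are checked, inversion is a smooth involution, so $\ttt=\s\circ(\,\cdot\,)^{-1}$ is a submersion too, and $\cG$, being open in $\R^{32}$, is Hausdorff and second countable.

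The heart of the matter is a short list of octonionic identities, all consequences of the composition-algebra axioms: norm multiplicativity $\|a\cdot b\|=\|a\|\,\|b\|$ (with polarization $\la x\cdot a,x\cdot b\ra=\|x\|^2\la a,b\ra$), the adjunctions $\la a\cdot b,c\ra=\la b,\bar a\cdot c\ra=\la a,c\cdot\bar b\ra$, and Artin's theorem that two-generated subalgebras are associative. Writing $(\tilde x,\tilde y):=\lambda(g)\,\ttt(g)$ for $g=(F,G,x,y)$ --- so $\tilde x=x+\|x\|^2F+(x\cdot\bar y)\cdot G$, $\tilde y=y+\|y\|^2G+(y\cdot\bar x)\cdot F$ --- one gets $\|\tilde x\|^2=\lambda(g)^2\|x\|^2$ by expanding and using the adjunctions, hence $\|\tilde y\|^2=\lambda(g)^2\|y\|^2$ by the $(x,F)\leftrightarrow(y,G)$ symmetry of $\lambda$, so $\|\ttt(g)\|^2=\|x\|^2+\|y\|^2$: $\ttt$ preserves the radius. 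If moreover $(x,y)\in l_m$, i.e. $y=m\cdot x$ with $x\neq0$, substituting and using Artin collapses $\tilde x$ to $x+\|x\|^2(F+\bar m\cdot G)$ and $\tilde y$ to $m\cdot\tilde x$; and $\tilde x=0$ when $x=0$. So $\ttt$ preserves octonionic lines, and combined with the radius statement, every $\cG$-orbit lies in a single leaf of $\cL_{OH}$. For composable $h,h'$ with $h=(F,G,x,y)$ and $\ttt(h)=(x',y')$, this forces $\|x'\|^2=\|x\|^2$ and $x'\cdot\bar y'=x\cdot\bar y$; substituting these into Definition~\ref{defgint} and using $\lambda(h)\,x'=\tilde x$, a term-by-term expansion yields the cocycle relations $\lambda(h'\cdot h)=\lambda(h)\,\lambda(h')$ and $\ttt(h'\cdot h)=\ttt(h')$. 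The groupoid axioms are then formal: associativity of $\m$ is \emph{exactly} multiplicativity of $\lambda$; the unit laws reduce to $\lambda(\uuu(p))=1$; the inverse laws follow, with $\lambda(h^{-1})=\lambda(h)^{-1}$, $\ttt(h^{-1})=\s(h)$ emerging along the way. Hence $\cG$ is a Lie groupoid over $\OO^2$.

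It remains to strengthen this inclusion to an equality. Differentiating $\ttt(tF,tG,x,y)$ at $t=0$ gives the anchor of the algebroid, $\rho_p(F,G)=\bigl(\|x\|^2F+(x\cdot\bar y)\cdot G-\ell\,x,\ \|y\|^2G+(y\cdot\bar x)\cdot F-\ell\,y\bigr)$ with $\ell=\la x,F\ra+\la y,G\ra$ and $p=(x,y)$; from the previous paragraph (equivalently, from the central lemma \eqref{conditionint}--\eqref{orthogonalint}) one has $\rho_p(\OO^2)\subseteq T_pL$ for the leaf $L$ through $p$. Conversely a $7$-dimensional subspace of the fibre $\{(F,G)\}$ already hits all of $T_pL$: for $x\neq0$ the vectors $\rho_p(F,0)=(u,m\cdot u)$, $u=\|x\|^2F$, with $F\perp x$, sweep out $T_pL=\{(u,m\cdot u):u\perp x\}$ ($m$ the slope at $p$); for $x=0$ the vectors $\rho_p(0,G)=(0,\|y\|^2G)$ with $G\perp y$ sweep out $T_pL=\{0\}\times y^{\perp}$. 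So $\rho_p(\OO^2)=T_pL$ for all $p\neq0$, while $\rho_0=0$ and $\{0\}$ is a fixed point. Consequently each $\cG$-orbit is an immersed submanifold whose tangent space at every one of its points $q$ is $T_qL$, hence it is open in the connected leaf $L$; as $L$ is a disjoint union of such open orbits, it is a single one. Thus the orbits of $\cG$ are exactly the leaves of $\cL_{OH}$.

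Finally, $\mathrm{G}_2$-equivariance is by inspection: $\mathrm{G}_2=\Aut(\OO)$ acts diagonally on $\OO^2\times\OO^2$, and since every operation in \eqref{lambdaint} and Definition~\ref{defgint} (product, conjugation, norm, inner product) is $\mathrm{G}_2$-equivariant, $\lambda$ is $\mathrm{G}_2$-invariant (so $\cG$ is preserved) and $\s,\ttt,\m,\uuu$ and inversion are $\mathrm{G}_2$-equivariant; i.e. $\mathrm{G}_2$ acts by Lie groupoid automorphisms, matching $\phi(l_m)=l_{\phi(m)}$ and $\phi(\mathrm{S}(r))=\mathrm{S}(r)$. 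I do not anticipate a real obstacle: the formulae are designed so the verification reduces to the composition-algebra identities above. The point needing most care is the multiplicativity $\lambda(h'\cdot h)=\lambda(h)\,\lambda(h')$ --- the cocycle property behind associativity of $\m$ --- which comes from the somewhat lengthy but routine term-by-term expansion after substituting $\|x'\|^2=\|x\|^2$, $x'\cdot\bar y'=x\cdot\bar y$; one must also handle the singular stratum ($x=0$ and the origin) separately throughout, as indicated above.
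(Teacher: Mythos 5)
Your proposal is correct and, for the groupoid structure itself, follows the same skeleton as the paper: the identity $\|x\|^2\lambda(g)^2=\|x+\|x\|^2F+(x\acts\overline{y})\acts G\|^2$ (Lemma \ref{lemlresdef}), preservation of octonionic lines and of the radius (Lemma \ref{sameslop}), the invariants $\|x'\|=\|x\|$, $x'\acts\overline{y'}=x\acts\overline{y}$ (Corollary \ref{corxy}), and then multiplicativity of $\lambda$ (Lemma \ref{lammult}) as the engine behind associativity, the unit and inverse laws, and target invariance (Lemmas \ref{corass}--\ref{lemtoi}); your substitutions are exactly the ones that make those ``routine'' expansions close, and your remark about treating $x=0$ and the origin separately is the right (and in the paper only implicit) fix for the division by $\|x\|$. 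You deviate in two places. For the submersivity of $\ttt$ you write $\ttt=\s\circ\iii$ with $\iii$ a smooth involution, which is a slightly cleaner packaging of the paper's construction of local sections through inverse arrows --- morally the same argument. More substantially, for the inclusion ``leaves $\subseteq$ orbits'' the paper exhibits, for every $(x,m\acts x)\in L_{m,r}$, an explicit arrow $g_{m,r}$ joining it to the reference point $(\|x\|,m\acts\|x\|)$ (and similarly on $l_\infty$), whereas you differentiate $\ttt$ to get the anchor \eqref{rhoanchor}, check that its image equals $T_pL$ at every $p\neq 0$ (your fibrewise computation with $F\perp x$, resp.\ $G\perp y$, is correct), and invoke the standard fact that Lie groupoid orbits are immersed submanifolds with tangent space the image of the anchor, so each orbit is open in the connected embedded leaf and hence equals it. This is a genuinely different and valid route; it costs you a citation to the general theory of orbits (e.g.\ \cite{CF11}) and the connectedness/embeddedness of the $7$-sphere leaves, but it buys the anchor computation that the paper anyway redoes later in Proposition \ref{coralg}, whereas the paper's explicit arrows keep the proof self-contained and even show that any point of a leaf is reached from a base point by a single arrow. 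The $\mathrm{G}_2$-equivariance argument is identical to the paper's. The only soft spot is that the multiplicativity of $\lambda$ and $\ttt(g'\acts g)=\ttt(g')$ are announced as term-by-term expansions rather than carried out, but the identities you propose to substitute are precisely those used in the paper, so no gap in substance.
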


\vskip 2mm\noindent Differentiating this Lie groupoid to a Lie algebroid, we obtain the trivial vector bundle $E_0=\underline{\OO^2}$ over $\OO^2$ and the anchor map $\rho\colon E_0\to  T\OO^2\cong\underline{\OO^2}$ given by
\begin{align}
    \label{rhoanchorint}
\rho\renewcommand{\arraystretch}{0.7}\begin{pmatrix}u\\v\end{pmatrix}=\renewcommand{\arraystretch}{0.7}\begin{pmatrix}\|x\|^2u+(x\!\cdot\!\overline{y})\!\cdot\!v-(\la x,u\ra+\la y,v\ra)x\\\|y\|^2v+(y\!\cdot\!\overline{x})\!\cdot\!u-(\la x,u\ra+\la y,v\ra)y\end{pmatrix}\,.
        \end{align}
Here, $\underline{V}$ stands for a trivial vector bundle over the base manifold, which has the vector space $V$ as its fibers. The Lie bracket evaluated on constant sections $\renewcommand{\arraystretch}{0.7}\begin{pmatrix}u\\v\end{pmatrix},\renewcommand{\arraystretch}{0.7}\begin{pmatrix}u'\\v'\end{pmatrix}\in \Gamma(E_0)$ is defined by 
        \begin{align}
            \label{brabraint}[\begin{pmatrix}u\\v\end{pmatrix},\begin{pmatrix}u'\\v'\end{pmatrix}]&=(\la x,u\ra+\la y,v\ra)\begin{pmatrix}u'\\v'\end{pmatrix}-(\la x,u'\ra+\la y,v'\ra)\begin{pmatrix}u\\v\end{pmatrix}\,.        
        \end{align}

\vskip 2mm\noindent This Lie algebroid realizes $\cL_{OH}$ as leaf decomposition induced by a singular foliation $\cF_{OH}$. This singular foliation turns out to be maximal among all singular foliations with the same leaf decomposition. More precisely, utilizing Equations \eqref{conditionint} and \eqref{orthogonalint}, together with computations done by means of  Macaulay2, we prove the following result: The singular octonionic Hopf foliation $\cF_{OH}$ is generated by all vector fields tangent to the leaves of $\cL_{OH}$.

\vskip 2mm\noindent Finally, we complete the study of $\cF_{OH}$ by extending the Lie algebroid $(E_0,[\cdot,\cdot],\rho)$ to a universal Lie $3$-algebroid. The \emph{universal Lie $\infty$-algebroid} of a singular foliations is introduced in \cite{LGLS20}. Lie $\infty$-algebroids, first appeared in \cite{S05} (see also \cite{V10}) as higher analoguous of Lie algebroids, can be defined as a positively graded vector bundle $E = \bigoplus_{i \geq 0} E_{-i}$ over a manifold $M$, together with a family of graded skew-symmetric and multilinear maps $l_k \colon \wedge^k \Gamma(E) \to \Gamma(E)$ of degree $2 - k$, called $k$-brackets and an anchor $\rho\colon E_0\to TM$. The barckets and the anchor are required to satisfy some compatibility conditions. In \cite{LGLS20}, it is proven that for a foliated manifold $(M,\cF)$, upon the existence of a \emph{geomtric resolution}, one can associate a Lie $\infty$-algebroid to the singular foliation, called a universal Lie $\infty$-algebroid of $\cF$. This association turns out to be unique up to homotopy, and leads to invariants of singular foliations. However, this association is not constructive, and different singular foliations may need different techniques to compute the universal Lie $\infty$-algebroid. For example, for linear foliations obtained by the actions of some subgroups of the general linear group, the construction of the associated universal Lie $\infty$-algebroids is explained in \cite{S23}. 

\vskip 2mm\noindent To construct a universal Lie $3$-algebroid of $\cF_{OH}$, in the first step, we use an exact sequence found via Macaulay2. This leads to finding a geometric resolution for the singular foliation on $\OO^2$. There, we have $E_{0}:=\underline{\OO^2}, E_{-1}:=\underline{\R\oplus\OO\oplus\R}\,,$  $E_{-2}:=\underline{\R}\,$, and $E_i=0$ for $i\leq 3$. The anchor map $\rho\colon E_0\to T\OO^2\cong \underline{\OO^2}$ and the $2$-bracket restricted to sections of degree zero, coincide with the anchor and the Lie bracket of the Lie algebroid $(E_0,[\cdot,\cdot],\rho)$. The $1$-bracket on sections $\renewcommand{\arraystretch}{0.7}\begin{pmatrix}
    \mu \\
    a   \\
    \nu
\end{pmatrix}\in \Gamma(E_{-1})$ and $t\in \Gamma(E_{-2})$  is given by

\begin{align}\nonumber
    \exd^{(1)}\begin{pmatrix}
    \mu \\
    a   \\
    \nu 
\end{pmatrix}&:=\begin{pmatrix}
    \mu x+a\!\cdot\!y \\
    \nu y+\overline{a}\!\cdot\!x
\end{pmatrix}\,,\\
    \exd^{(2)}(t)&:=\begin{pmatrix}
    -\|y\|^2t \\
    (x\!\cdot\!\overline{y})t   \\
    -\|x\|^2t
\end{pmatrix}\,.\nonumber
\end{align}
The $2$-bracket on further sections is defined as 
\begin{align}
[\begin{pmatrix}u\\v\end{pmatrix},\begin{pmatrix}\mu\\a\\\nu\end{pmatrix}]&:=\begin{pmatrix}-2\la y,\overline{a}\!\cdot\!u\ra+2\la y,v\ra\mu\\x\!\cdot\!(\overline{u}\!\cdot\!a)+(a\!\cdot\!v)\!\cdot\!\overline{y}-\mu (x\!\cdot\!\overline{v})-\nu (u\!\cdot\!\overline{y})\\-2\la x,a\!\cdot\!v\ra+2\la x,u\ra\nu\end{pmatrix}\,,\nonumber\\\,
[\begin{pmatrix}u\\v\end{pmatrix},t]&:=2(\la x,u\ra+\la y,v\ra)t\,,\nonumber\\\,
[\begin{pmatrix}\mu\\a\\\nu\end{pmatrix},\begin{pmatrix}\mu'\\a'\\\nu'\end{pmatrix}]&:=4\la a,a'\ra-2\mu\nu'-2\mu'\nu\,.
\end{align}
All other brackets are set to be zero. 

\vskip 2mm\noindent In Proposition \ref{thmlie3} we prove that these data define a Lie $3$-algebroid, which is in addition minimal at the origin, i.e.\ all the $1$-brackets vanish at the origin. Using the results of \cite{LGLS20}, we prove:

\begin{theoremint}
    The Lie algebroid $(E_0, [\cdot, \cdot], \rho)$ and similarly the Lie groupoid $\mathcal{G} \Rightarrow \OO^2$ have the minimal dimension among Lie algebroids and Lie groupoids over $\OO^2$ which induce the singular foliation $\mathcal{F}_{OH}$.
\end{theoremint}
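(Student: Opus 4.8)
The plan is to reduce the statement to a lower bound on the minimal number of local generators of $\cF_{OH}$ near the origin, and then to read that bound off from the fact that the Lie $3$-algebroid of Proposition~\ref{thmlie3} is \emph{minimal at the origin}. By the results of \cite{LGLS20}, any two geometric resolutions of $\cF_{OH}$ — hence any two universal Lie $\infty$-algebroids — are homotopy equivalent, and any geometric resolution can be contracted to one that is minimal at a prescribed point. In a resolution $\cdots\to E_{-1}\to E_0\xrightarrow{\rho}\cF_{OH}\to 0$ that is minimal at a point $x$ the differential vanishes at $x$, so tensoring over the local ring of germs at $x$ with its residue field $\R$ leaves the fibres $E_{-i}|_x$ untouched and yields $E_0|_x\cong \cF_{OH}/\mathfrak{m}_x\cF_{OH}$, where $\mathfrak{m}_x$ is the maximal ideal of germs vanishing at $x$. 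By Nakayama's lemma — applicable since $\cF_{OH}$ is locally finitely generated in the sense of \cite{AS09} — this means $\rk(E_0)$ equals the minimal number of generators of $\cF_{OH}$ on a neighbourhood of $x$, a quantity independent of the chosen resolution. Taking $x=0$ and using that in Proposition~\ref{thmlie3} one has $E_0=\underline{\OO^2}$ with all $1$-brackets vanishing at the origin, we conclude that $\cF_{OH}$ admits no system of generators with fewer than $\dim_\R\OO^2=16$ elements on any neighbourhood of $0$.

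From here the dimension bounds are formal. Let $(A,[\cdot,\cdot]_A,\rho_A)$ be any Lie algebroid over $\OO^2$ with $\rho_A(\Gamma(A))=\cF_{OH}$. Picking a local frame of $A$ near $0$ and multiplying by bump functions produces $\rk(A)$ global sections whose images under $\rho_A$ generate the germ of $\cF_{OH}$ at $0$, whence $\rk(A)\ge 16$; since $E_0=\underline{\OO^2}$ has rank $16$ and induces $\cF_{OH}$, it is of minimal rank. If $\cG'\Rightarrow\OO^2$ is a Lie groupoid inducing $\cF_{OH}$, i.e.\ such that the singular foliation $\rho_{A'}(\Gamma(A'))$ attached to its Lie algebroid $A'=\mathrm{Lie}(\cG')$ equals $\cF_{OH}$, then $A'$ is a Lie algebroid of the kind just discussed, so $\rk(A')\ge 16$ and
\[
\dim\cG'=\dim\OO^2+\rk(A')\ \ge\ 16+16\ =\ 32\ =\ \dim\cG,
\]
the last equality because $\cG$ is an open subset of $\OO^2\times\OO^2$. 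Hence $\cG$ realizes the minimal dimension.

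The only genuinely nontrivial input is the first paragraph: one must extract from \cite{LGLS20} the precise statement that a geometric resolution which is minimal at a point computes the minimal number of local generators there, and that this number is resolution-independent, uniformly across the polynomial, real-analytic and smooth categories. In the smooth setting the ring of germs is not Noetherian, so classical commutative algebra does not apply directly; one relies instead on the local finite generation hypothesis of \cite{AS09} together with the homotopy-uniqueness of universal Lie $\infty$-algebroids. Everything else is routine once Proposition~\ref{thmlie3} — that the displayed brackets form a Lie $3$-algebroid resolving $\cF_{OH}$ and minimal at the origin — is in hand.
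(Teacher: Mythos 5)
Your argument is correct and follows essentially the same route as the paper: the paper likewise combines the minimality at the origin of the Lie $3$-algebroid from Proposition~\ref{thmlie3} with Proposition~\ref{minrank} (whose content — that a resolution minimal at $q$ computes $\dim\cF_q$, the minimal number of local generators — you re-derive via Nakayama instead of simply citing \cite{LGLS20}) to get the lower bound $16$ on the rank of any Lie algebroid inducing $\cF_{OH}$, and then concludes $\dim\cG'\geq 16+16=32$ for any Lie groupoid exactly as you do.
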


\vskip 2mm\noindent The structure of this paper is as follows$\colon$ 

\vskip 2mm\noindent In \emph{Section \ref{nda}}, we provide a self-contained background on the \emph{normed division algebras} and their properties. In particular, for the case of octonions, we mention many identities which turn out to be useful in calculations, in the absence of  associativity. We conclude the section by introducing the singular Hopf leaf decomposition associated to each of the four normed division algebras $\R,\C, \HH$, and $\OO$.

\vskip 2mm\noindent In \emph{Section \ref{secgroupoid}}, we provide and explain the construction of the Lie groupoid which induces the singular octonionic Hopf leaf decomposition as its orbits. We then differentiate it to a Lie algebroid of rank $16$, realizing $\cL_{OH}$ as the leaf decomposition of a singular foliation.

\vskip 2mm\noindent \emph{Section \ref{secnonhom}} is devoted to the study of $\cF_{OH}$, its non-homogeneity and maximality.

\vskip 2mm\noindent In \emph{Section \ref{sec:SRF}}, we study $\cF_{OH}$ as a singular Riemannian foliation in the sense of Molino \cite{M98} and in our sense. We explain how this example distinguishes the two definitions in the real analytic setting.

\vskip 2mm\noindent In
\emph{Section \ref{lieinf}}, finally, after recalling the Universal Lie $\infty$-algebroid of singular foliations, we construct the Universal Lie $3$-algebroid of $(\OO^2,\cF_{OH})$, and prove the minimality of the rank for the Lie algebroid $(E_0,[\cdot,\cdot],\rho)$, and correspondingly for the Lie groupoid $\cG\Rightarrow\OO^2$.

\vskip 2mm\noindent The computations done via Eisenbaud's Macaulay2 are explained in \emph{Appendix} \ref{appa}.

%\newpage
\subsection*{Acknowledgements}  

\noindent 
\vskip 2mm
\noindent

\noindent  Ricardo Mendes made us aware of the  singular octonionic leaf decomposition as  a non-homogenous singular Riemannian foliation (in the sense of Molino) and we thank him for this fruitful suggestion.

\vskip 2mm\noindent We are grateful to Martin Cederwall for drawing our attention to \cite{C92} and for disucssions about it. 

\vskip 2mm\noindent We also gratefully acknowledge stimulating discussions with Christian Blohmann and Camille Laurent-Gengoux. 

\vskip 2mm\noindent This work was supported by the LABEX MILYON (ANR-10-LABX-0070) of Universit\'e de Lyon, within the program ``Investissements d'Avenir'' (ANR-11-IDEX-0007) operated by the French National Research Agency (ANR). 

\vskip 2mm\noindent We also  profited from the marvellous environment provided  within the program "Higher structures and Field Theory" and "Geometry of gauge theories: old and new" at the ERWIN SCHR\"ODINGER INSTITUTE in Vienna. T.S.\ is furthermore very grateful to the ESI hosting him as a Senior Research Fellow during part of the time where this paper was finished.

\section{Normed division algebras and Hopf fibrations}\label{nda}

\subsection{Basic properties of normed division algebras}\label{basform}

\begin{deff}
    A \emph{normed division algebra} is a (finite-dimensional) Euclidean vector space $(\DD,\langle \cdot , \cdot \rangle)$ equipped with the structure of a unital $\R$-algebra satisfying 
    \begin{equation}
         \|a \!\cdot\! b\| = \|a\| \!\cdot\! \|b\| \label{norm}
    \end{equation}
for all $a,b \in \DD$, where the norm is the one induced by the inner product.
\end{deff}

\vskip 2mm \noindent Since \eqref{norm} implies $a \!\cdot\! b=0$ can be satisfied only if $a=0$ or $b=0$, this is a division algebra, as suggested by the name. The existence of a unit element $1$ implies the embedding $\iota \colon \R \to \DD, \alpha \mapsto \alpha 1$. The orthogonal projections of an element $a \in \DD$ to $\mathrm{im}(\iota)$ and $\mathrm{im}(\iota)^\perp$ permit us to define the \emph{real part}
and the  \emph{imaginary part} of $a$, respectively: $\mathrm{Re}(a) =\langle a, 1 \rangle 1 $ and $\mathrm{Im}(a)=a - \mathrm{Re}(a)$ and to generalize  complex conjugation by means of the involution:
\begin{equation}\label{conjdef}
   \overline{a} := \mathrm{Re}(a) - \mathrm{Im}(a) \, .
\end{equation}

\begin{theorem}[\cite{H98}] Every normed division algebra is isomorphic to one of the following four: The real numbers $\R$, the complex numbers $\C$, the quaternions $\HH$,  the octonions $\OO$.    
\end{theorem}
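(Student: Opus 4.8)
The plan is to run the classical Cayley--Dickson classification, using only the composition identity \eqref{norm}. First I would polarize \eqref{norm}: replacing $b$ by $b+b'$ gives $\langle a\cdot b,\, a\cdot b'\rangle=\|a\|^2\langle b,b'\rangle$, replacing $a$ by $a+a'$ gives $\langle a\cdot b,\, a'\cdot b\rangle=\|b\|^2\langle a,a'\rangle$, and polarizing once more yields $\langle a\cdot b,\, a'\cdot b'\rangle+\langle a'\cdot b,\, a\cdot b'\rangle=2\langle a,a'\rangle\langle b,b'\rangle$. Combining these with the definition \eqref{conjdef} of the conjugation $a\mapsto\overline a=2\langle a,1\rangle 1-a$, I would deduce the adjunction relations $\langle a\cdot b,c\rangle=\langle b,\overline a\cdot c\rangle$ and $\langle a\cdot b,c\rangle=\langle a,c\cdot\overline b\rangle$, and from them $\overline a\cdot a=a\cdot\overline a=\|a\|^2\,1$, the conjugation-reversal law $\overline{a\cdot b}=\overline b\cdot\overline a$, and the quadratic identity $a\cdot a-2\langle a,1\rangle\,a+\|a\|^2\,1=0$ valid for all $a\in\DD$. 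In particular the unital subalgebra generated by a single element is commutative, associative and, inheriting a multiplicative norm without zero divisors, is isomorphic as a normed algebra to $\R$ or $\C$.

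The next and technically central step is to establish the alternative laws $a\cdot(a\cdot b)=(a\cdot a)\cdot b$ and $(b\cdot a)\cdot a=b\cdot(a\cdot a)$, together with the flexible law $a\cdot(b\cdot a)=(a\cdot b)\cdot a$, by feeding the conjugation relations just obtained into the polarized composition identities and simplifying. By Artin's theorem the subalgebra generated by any two elements is then associative, which is precisely what the doubling construction needs. Concretely, if $A\subsetneq\DD$ is a proper unital subalgebra---automatically closed under conjugation, since $\overline a=2\langle a,1\rangle 1-a$---and $j\in A^{\perp}$ is a unit vector, one checks from the bilinear identities that $A$ is orthogonal to $A\cdot j$, that $A\oplus A\cdot j$ is again a unital subalgebra, and that its multiplication is the Cayley--Dickson double of that of $A$: $j\cdot j=-1$, $j$ anticommutes with $\mathrm{Im}(A)$, and $a\cdot(b\cdot j)=(b\cdot a)\cdot j$, $(a\cdot j)\cdot b=(a\cdot\overline b)\cdot j$, $(a\cdot j)\cdot(b\cdot j)=-\,\overline b\cdot a$ for $a,b\in A$.

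Finally I would iterate. Starting from $A=\R\cdot 1$, as long as $A$ is a proper subalgebra of $\DD$ I enlarge it to $A\oplus A\cdot j$; since $\DD$ is finite-dimensional the process stops, having produced successively copies of $\R$, $\C$, $\HH$, $\OO$. The key point for termination is that a Cayley--Dickson double $A\oplus A\cdot j$, being a subalgebra of $\DD$ and hence itself a normed division algebra, is multiplicative for the norm only when $A$ is associative---expanding $\|(a+b\cdot j)\cdot(c+d\cdot j)\|^2$ with the above formulas, the cross terms cancel exactly when $\overline d\cdot(b\cdot\overline c)=(\overline d\cdot b)\cdot\overline c$ for all $b,c,d\in A$. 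Thus once a copy of $\OO$ has appeared it can no longer be a proper subalgebra, because $\OO$ is not associative (as recalled in Section~\ref{nda}); hence $\DD$ equals the last algebra produced, i.e.\ $\DD$ is isomorphic as a normed algebra to one of $\R,\C,\HH,\OO$. The main obstacle is the alternativity/flexibility step and the bookkeeping showing that doubling preserves \eqref{norm} exactly on the associative range $\R,\C,\HH$ and breaks beyond it; the remaining steps are routine manipulations of the polarized identities.
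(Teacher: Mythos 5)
The paper does not prove this statement at all: it is quoted as Hurwitz's classical theorem with the citation \cite{H98}, and the surrounding text merely remarks that the four algebras arise by iterating the Cayley--Dickson construction and that a further doubling would violate \eqref{norm}. Your sketch therefore supplies a proof where the paper supplies none, and it is the standard doubling argument (Dickson/Albert, as in Baez or Conway--Smith) rather than Hurwitz's original argument on composition of quadratic forms. As a sketch it is essentially correct: your polarized identities, the adjunction relations and $\overline{a}\cdot a=\|a\|^2\,1$ are exactly the computations the paper carries out later in Section~\ref{basform} (Equations \eqref{a^2}, \eqref{switch1}, \eqref{switch2}, \eqref{conj}), except that there they are derived for the already-classified algebras, whereas you derive them directly from \eqref{norm}; the doubling lemma and the termination criterion (multiplicativity of the norm on $A\oplus A\cdot j$ forces $A$ to be associative) are the standard remaining steps. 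Two small points: the cross-term computation cleanly yields the condition $d\cdot(a\cdot c)=(d\cdot a)\cdot c$ for all $a,c,d\in A$ (the conjugates in your formulation are harmless since conjugation is a bijection of $A$, but plain associativity is the statement you want); and the multiplication rules $a\cdot(b\cdot j)=(b\cdot a)\cdot j$, $(a\cdot j)\cdot b=(a\cdot\overline{b})\cdot j$, $(a\cdot j)\cdot(b\cdot j)=-\overline{b}\cdot a$ are precisely where the alternativity/Artin input is used, so in a full write-up that ``one checks'' carries most of the technical weight. Neither point is a genuine gap; your route also has the side benefit of reproving, straight from \eqref{norm}, the identities the paper needs later.
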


\vskip 2mm \noindent They can be obtained successively by the Cayley-Dickson construction starting from the real numbers, doubling the dimension in each step. In the process one looses in the first step that all elements are real, in the second step commutativity, and in the last step associativity. Continuing Cayley-Dickson further then violates Equation \eqref{norm}. While non-associative, the octonions are still an alternative algebra, i.e.\ the associator $[a,b,c] = a \!\cdot\! ( b \!\cdot\! c) - (a \!\cdot\! b) \!\cdot\! c$ is skew-symmetric for all $a,b,c \in \OO$. This implies in particular
\begin{equation}
    a \!\cdot\! (b \!\cdot\! a) = (a \!\cdot\! b) \!\cdot\! a \, ,
\end{equation}
which, for this reason, we will simply write as $a \!\cdot\! b \!\cdot\! a$ henceforth.

In addition, the real components of the arguments of the associator do not contribute. Thus one has, for example, $[a,b,c]=-[\overline{b},\overline{a},c]$, which, when written out, gives 
 \begin{equation}\label{semiasseqn}
        a \!\cdot\! (b \!\cdot\! c)+\overline{b} \!\cdot\! (\overline{a} \!\cdot\! c)=(a \!\cdot\! b) \!\cdot\! c + (\overline{b} \!\cdot\! \overline{a}) \!\cdot\! c\,.
    \end{equation}

 \vskip 2mm \noindent This equation yields several ones that we will use in the main text below and which we will derive from it now.

\vskip 2mm \noindent As a direct consequence of the Cayley-Dickson construction, we have $ \overline{a \!\cdot\! b} = \overline{b} \!\cdot\! \overline{a}$ for all $a,b\in \OO$ as well as
\begin{equation}
  \overline{a} \!\cdot\! a = a \!\cdot\! \overline{a} =  \|a\|^2 \, , \label{a^2}
\end{equation}
where the embedding $\iota$ is understood on the right-hand side. Replacing $a$ by $a+b$ in Equation \eqref{a^2} then recovers the standard inner product by the formula.
\begin{equation}\label{inpbar}
    \langle a , b\rangle=\tfrac{1}{2}(a \!\cdot\! \overline{b}+b \!\cdot\! \overline{a})=\mathrm{Re}(a \!\cdot\! \overline{b})\,.
\end{equation}
This in particular implies that Equation \eqref{semiasseqn} can be rewritten as
\begin{equation}\label{semiasseqn2}
        a \!\cdot\! (b \!\cdot\! c)+\overline{b} \!\cdot\! (\overline{a} \!\cdot\! c)=2\langle a , \overline{b}\rangle \!\cdot\! c\,.
    \end{equation}

\vskip 2mm \noindent Equation \eqref{a^2} implies that every $a\neq 0$ has an inverse $a^{-1}=\overline{a}/\|a\|^2$. If we replace $b$ and $c$ in Equation \eqref{semiasseqn2}  by $\overline{a}$ and $b$, respectively, we obtain  $a\!\cdot\! (\overline{a} \!\cdot\! b)=\|a\|^2 \!\cdot\! b$, which, for $a\neq 0$, implies 
\begin{align}
   a \!\cdot\! (a^{-1} \!\cdot\! b) = b\,,\label{invl}\\
   (b \!\cdot\! a^{-1}) \!\cdot\! a =b\,\label{invr},
\end{align}
where the second equation follows from the first one by conjugation.

\vskip 2mm \noindent Polarization of Equation \eqref{norm} leads to 
\begin{equation}
    \langle a\!\cdot\! b,a\!\cdot\! c\rangle=\|a\|^2 \langle  b, c\rangle=\langle b \!\cdot\! a, c \!\cdot\! a\rangle\,,
\end{equation}
which  yields the frequently used equations (for $a\neq 0$, replace $c$ by $a^{-1} \cdot c$ and by $c \cdot a^{-1}$, respectively):
\begin{align}
     \langle a\!\cdot\! b , c\rangle=\langle b , \overline{a} \!\cdot\! c\rangle\,,\label{switch1}\\
     \langle b \!\cdot\! a, c\rangle=\langle b , c \!\cdot\! \overline{a}\rangle\,.\label{switch2}
\end{align}

\vskip 2mm \noindent Replacing $c$ by $a$ in Equation \eqref{semiasseqn2}, we obtain the conjugation formula
\begin{equation}\label{conj}
    a \!\cdot\! b \!\cdot\! a = 2\langle a, \overline{b}\rangle a - \|a\|^2\overline{b}\,.
\end{equation}

\vskip 2mm \noindent Finally, these equations imply the well-known Moufang identities: for every $a,b,c\in \OO$ we have
    \begin{align}
        (a\!\cdot\! b)\!\cdot\!(c\!\cdot\! a) &= a\!\cdot\! (b\!\cdot\! c)\!\cdot\! a\,,\label{moufang1}\\
        a\!\cdot\!(b\!\cdot\! (a\!\cdot\! c)) &= (a\!\cdot\! b \!\cdot\! a)\!\cdot\! c\,,\label{moufang2}\\
        ((a\!\cdot\! b)\!\cdot\! c)\!\cdot\! b &= a\!\cdot\! (b\!\cdot\! c\!\cdot\! b)\,. \label{moufang3}
    \end{align}
Let us illustrate this for Equation \eqref{moufang1}: Using Equations \eqref{semiasseqn2} and \eqref{invr}, we have
\begin{align*}
       (a\!\cdot\! b)\!\cdot\!(c\!\cdot\! a)&=2\langle a\!\cdot\! b,\overline{c}\rangle\!\cdot\! a - \overline{c}\!\cdot\! ((\overline{b}\!\cdot\! \overline{a})\!\cdot\! a)\\
       &=2\langle a, (\overline{b\!\cdot\! c})\rangle \!\cdot\! a - \|a\|^2 \!\cdot\! (\overline{b\!\cdot\! c})\\
       &=a\!\cdot\! (b\!\cdot\! c) \!\cdot\! a\,,
   \end{align*}
where we used Equation \eqref{semiasseqn2} once more to obtain the last equality.

\subsection{Hopf fibrations}

\vskip 2mm \noindent The Hopf fibration can be constructed for each of the four normed division algebras $\R, \C, \HH$, and $\OO$. 
As explained in this subsection, there is a significant difference between the first three and the octonions, due to the lack of associativity for the latter. 

\vskip 2mm \noindent We start by describing the construction for the case of associative normed divisions algebras. Here $\A$ stands for $\R, \C$ or $\HH$. Associativity implies that the set of units $U=\left\{u\in \A \,\colon\, \|u\|^2=1\right\}$  in $\A$ forms a Lie group. As multiplication by elements of norm one preserves the norm of every element in $\A$, we can consider the right Lie group action of $U$ on the unit sphere $S\subset \A^2$, defined by
\begin{equation}
    (a,b) \!\cdot\! u:=(a \!\cdot\! u, b \!\cdot\! u)\,
\end{equation}
for every $(a,b)\in \A^2$ and $u\in U$. This action is free. It is proper as well since both $U$ and $S$ are compact manifolds. Consequently, the right Lie group action $S\times U \to S$ defines a principal $U$-bundle. Table \ref{T1} below provides an overview of the three corresponding bundles. 

\begin{table}[htb]
\centering

\begin{tabular}{|c|c|c|c|c|c|}
\hline
$\A$ & Structure group & Total space & Base manifold\\
\hline
$\R$ & $\mathbb{Z}/2\mathbb{Z}\cong S^0$ & $S^1$ & $S^1$\\
\hline
$\C$ & $U(1)\cong S^1$ & $S^3$ & $S^2$\\
\hline
$\HH$ & $SU(2)\cong S^3$ & $S^7$ & $S^4$\\
\hline
\end{tabular}
\caption{}
\label{T1}
\end{table}

\begin{rem}
 If we replace the right action in the construction of Hopf fibrations by a left action, we obtain the same fibrations for the cases of $\A=\R,\C$. However, due to non-commutativity, the case of $\A=\HH$ will have a different fibration. Although we again obtain a fibration of $S^7$ into $3$-spheres, these fibers are not identically the same as the fibers in the construction by the right action described above. To confirm this, note that in our description for any point $(a',b')$ on the fiber passing through $(a,b)\in S^7
    \subset \HH^2$, we have $b' \!\cdot\! \overline{a'}= b \!\cdot\! \overline{a}$, which is not true in the other description.
\end{rem}

\vskip 2mm \noindent Due to the lack of associativity, the set of unit octonions $S^7\subset \OO$ does not form a Lie group. Consequently, right multiplication by unit octonions is not a Lie group action and may fail to construct the octonionic Hopf fibration. To see the disadvantages of such a construction, it is useful to introduce the following more explicit description of octonions:

\begin{deff}\label{o2}
    The algebra $\OO$ of octonions is generated by an orthonormal basis $e_0 \equiv 1$ and $\left\{e_i\right\}_{i=1}^7$, where $e_0$ is chosen to be the unit and the multiplication among the remaining basis elements can be defined by 
\begin{equation}
  e_i\!\cdot \! e_j=-\delta_{ij}1+\epsilon_{ijk}e_k, \;\;\;\;\;\;\;\;\; i,j=1,...,7 \, .
\end{equation}
Here $\delta_{ij}$ is the Kronecker delta, and $\epsilon_{ijk}$ is a completely anti-symmetric tensor with value $1$ when $ijk\in\{123, 145, 176, 246, 257, 347, 365\}$ and $0$ for all other triples.
\end{deff}

\vskip 2mm \noindent In particular, Definition \ref{o2} identifies the underlying vector space of $\OO$ with $\R^8$, where the identification is given by $a=\sum_{i=0}^7a^ie_i\mapsto (a_0,a_1,\ldots,a_7)$. When this identification is understood, by abuse of notation, we write $a=(a_0,a_1,\ldots,a_7)=\left(a^i\right)_{i=0}^7$.

\vskip 2mm \noindent Now for every $(x,y)\in \mathrm{S}^{15}\subset \OO^2$ consider the $7$-sphere defined as 
$$S^7_{(x,y)}:=\left\{(x\!\cdot\!u, y\!\cdot\!u)\,\colon\, u\in\OO\,\,,\,\, \|u\|=1\right\}\subset \mathrm{S}^{15}\subset \OO^2\,.$$
If right multiplication by octonions of norm $1$ induces an honest fibration, for every point $(x',y')\in S^7_{(x,y)}$, we should have $S^7_{(x',y')}=S^7_{(x,y)}$. Equivalently, for every $(x,y)\in \mathrm{S}^{15}\subset \OO^2$ and every two unit octonions $u_1$ and $u_2$, there should exist a unique unit octonion $u_3$ such that
\begin{align}
    (x\!\cdot\!u_1)\!\cdot\!u_2=x\cdot u_3\,,\label{cont1}\\
    (y\!\cdot\!u_1)\!\cdot\!u_2=y\cdot u_3\,.\label{cont2}
\end{align}
But it is not difficult to find an example violating this necessary condition: If we choose $(x,y)=(e_1/\sqrt{2},e_2/\sqrt{2})$, $u_1=e_5$ and $u_2=e_4$, in Equation \eqref{cont1} we find the unique solution $u_3=-e_1$, but the unique solution for Equation \eqref{cont2} turns out to be $u_3=e_1$. Consequently, right multiplication by unit octonions does not fibrate $\mathrm{S}^{15}$ into $7$-spheres. In particular, it does not induce the octonionic Hopf fibration. 

\subsection{Singular Hopf leaf decomposition}

\vskip 2mm \noindent Since the usual construction fails for the octonionic Hopf fibration, we describe the construction of Hopf fibrations using $\DD$-lines, which can also be used in the case of octonions. 

\begin{deff}\label{lines}
    Let $\DD$ be a normed division algebra. For every $m\in \DD$, the \emph{$\DD$-line} in $\DD^2$  with slope $m$ is defined as 
    \begin{equation}
         l_m:=\left\{(x,m\!\cdot\!x)\in \DD^2\,\colon\, x\in \DD\right\}.
    \end{equation}
 One also defines $\DD$-line with slope $\infty$:
 \begin{equation*}
     l_{\infty}:=\left\{(0,x)\in \DD^2\,\colon\, \, x\in \DD\right\}\,.
 \end{equation*}
 
\end{deff}

\begin{deff}
Let $\DD$ be a normed division algebra. The \emph{singular Hopf leaf decomposition} of $\DD^2$ is defined as
\begin{align}
    L_{m,r}&:=l_m \cap \mathrm{S}(r)\:\quad \forall  m\in \DD,\nonumber \\
    L_{\infty,r}&:=l_\infty \cap \mathrm{S}(r)\,,
\end{align}
together with the origin in $\DD^2$. Here, $\mathrm{S}(r):=\left\{(x,y)\in \DD^2\,\colon\, \|x\|^2+\|y\|^2=r^2\right\}$ is the sphere of radius $r>0$ in $\DD^2$.
\end{deff}

\vskip 2mm \noindent The Hopf fibration associated to any of the normed division algebras $\DD=\R,\C,\HH$, and $\OO$ can be defined as the partition of the unit sphere $\mathrm{S}(1) \subset \DD^2$ into the leaves $L_{m,1}$, for all $m\in \DD$, and the leaf $L_{\infty,1}$.

\begin{rem}
    It can be easily verified that the definition of Hopf fibrations using $\A$-lines conincides with the fibrations described in Table \ref{T1}. Note that in the case of quaternions, if one prefers to work with left action, the definition of $\HH$-lines should be modified as
    \begin{equation}
        l_m:=\left\{(x,x\!\cdot\!m)\in \HH^2\,\colon\, x\in \H\right\}
    \end{equation}
    for the two fibrations to coincide.
\end{rem}

\section{The Lie groupoid and its induced Lie algebroid}\label{secgroupoid}
\subsection{The Lie groupoid}
\vskip 2mm \noindent In this section, we describe the constrution of a Lie groupoid $\cG \Rightarrow \OO^{2}$ which has $\cL_{OH}$ as its orbits. A prominent role in its construction will be played by the following function:
\begin{deff}\label{resfun}
    The \emph{(octonionic) rescaling function} $\lambda\colon \OO^2\times\OO^2\to \R$ is defined by the following formula:
    \begin{equation}
\label{lambda}
\lambda(F,G,x,y)=\sqrt{1+2\left(\la x,F\ra+\la y,G\ra+\la x\!\cdot\!\overline{y},F\!\cdot\!\overline{G}\ra\right)+\|x\|^2\, %\!\cdot\!
    \|F\|^2+\|y\|^2\, %\!\cdot\!
    \|G\|^2}\,.
\end{equation} 
\end{deff}

\vskip 2mm \noindent We refer to $F=\left(F^i\right)_{i=0}^7$,$G=\left(G^i\right)_{i=0}^7$ as \emph{arrow coordinates} and to $x=\left(x^i\right)_{i=0}^7$,$y=\left(y^i\right)_{i=0}^7$ as \emph{object coordinates}.

\vskip 2mm \noindent The following lemma ensures that the radicand in Definition \ref{resfun} is non-negative. Consequently, the rescaling function $\lambda$ is defined throughout $\OO^2\times \OO^2$.

\begin{lemma}\label{lemlresdef} For all $(F,G,x,y)\in \OO^2\times\OO^2$, one has
    \begin{align*}
        &\|x\|^2\left[1+2\left(\la x,F\ra+\la y,G\ra+\la x\!\cdot\!\overline{y},F\!\cdot\!\overline{G}\ra\right)+\|x\|^2\,\|F\|^2+\|y\|^2\,\|G\|^2\right] \\
        &=\|x+\|x\|^2\, F+(x\cdot\overline{y})\cdot G\|^2
    \end{align*}
\end{lemma}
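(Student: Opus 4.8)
The plan is to expand the right-hand side directly using the octonionic identities collected in Section~\ref{basform}, and to recognize the left-hand side as the result. Write $w := x + \|x\|^2\,F + (x\cdot\overline{y})\cdot G$, so that $\|w\|^2 = \langle w, w\rangle$ splits into six inner products: the three ``diagonal'' terms $\|x\|^2$, $\|x\|^4\|F\|^2$, $\|(x\cdot\overline{y})\cdot G\|^2$, and the three ``cross'' terms $2\|x\|^2\langle x, F\rangle$, $2\langle x, (x\cdot\overline{y})\cdot G\rangle$, and $2\|x\|^2\langle F, (x\cdot\overline{y})\cdot G\rangle$. First I would dispose of the easy pieces: by the norm multiplicativity \eqref{norm}, $\|(x\cdot\overline{y})\cdot G\|^2 = \|x\|^2\,\|y\|^2\,\|G\|^2$, which already produces the $\|x\|^2\|y\|^2\|G\|^2$ term on the left after factoring out $\|x\|^2$; and the first two diagonal terms visibly give the $\|x\|^2$ and $\|x\|^2\cdot\|x\|^2\|F\|^2$ contributions.

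The cross terms are where the octonionic identities enter. For $2\langle x, (x\cdot\overline{y})\cdot G\rangle$, I would apply \eqref{switch2} to move the right factor: $\langle x, (x\cdot\overline{y})\cdot G\rangle = \langle x\cdot\overline{G}, x\cdot\overline{y}\rangle$, and then the polarized norm identity $\langle a\cdot b, a\cdot c\rangle = \|a\|^2\langle b,c\rangle$ gives $\|x\|^2\langle \overline{G}, \overline{y}\rangle = \|x\|^2\langle y, G\rangle$ (using that conjugation is an isometry). This yields the term $2\|x\|^2\langle y, G\rangle$. For the last cross term $2\|x\|^2\langle F, (x\cdot\overline{y})\cdot G\rangle$, I would again use \eqref{switch1} or \eqref{switch2} to bring it to the form $2\|x\|^2\langle \,\cdot\, , \,\cdot\,\rangle$ matching $\langle x\cdot\overline{y}, F\cdot\overline{G}\rangle$: concretely $\langle F, (x\cdot\overline{y})\cdot G\rangle = \langle F\cdot\overline{G}, x\cdot\overline{y}\rangle$ by \eqref{switch2}. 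Assembling, the six terms reproduce exactly $\|x\|^2\bigl[1 + 2(\langle x,F\rangle + \langle y,G\rangle + \langle x\cdot\overline{y}, F\cdot\overline{G}\rangle) + \|x\|^2\|F\|^2 + \|y\|^2\|G\|^2\bigr]$.

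The main (and essentially only) obstacle is purely bookkeeping: one must be careful that all the ``move a factor across an inner product'' steps are legitimate without assuming $x \neq 0$ (the polarized identities \eqref{switch1}, \eqref{switch2} as stated were derived for $a\neq 0$ by substituting $a^{-1}$, but the resulting bilinear identities $\langle a\cdot b, c\rangle = \langle b, \overline{a}\cdot c\rangle$ etc.\ extend to $a = 0$ by continuity, both sides being polynomial in the components). Since the identity to be proved already carries an overall factor $\|x\|^2$, there is in fact no loss in first assuming $x\neq 0$, proving $1 + 2(\cdots) + \cdots = \|x^{-1}\cdot w\|^2$ by the same manipulations after dividing, and then clearing denominators; the case $x=0$ is then trivial as both sides vanish. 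I expect no conceptual difficulty beyond keeping track of which Moufang/polarization identity is invoked at each step.
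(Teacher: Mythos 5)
Your proposal is correct and follows essentially the same route as the paper: expand $\|x+\|x\|^2F+(x\cdot\overline{y})\cdot G\|^2$, use norm multiplicativity for the $\|(x\cdot\overline{y})\cdot G\|^2$ term, and identify the cross terms via the inner-product identities \eqref{switch1}--\eqref{switch2} (the paper handles $\la x,(x\cdot\overline{y})\cdot G\ra$ via \eqref{switch1} and \eqref{invr}, while you use \eqref{switch2} plus polarization --- an equivalent chain). Your remark that the bilinear identities extend to the degenerate cases by polynomiality is fine, though not an issue the paper even needs to address.
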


\begin{proof}
We first expand the right-hand side of the identity:
\begin{align*} 
    &\|x+\|x\|^2\, F+(x\!\cdot\!\overline{y})\!\cdot\! G\|^2
    =\big \langle 
     x+\|x\|^2\, F+(x\!\cdot\!\overline{y})\!\cdot\! G,x+\|x\|^2\, F+(x\!\cdot\!\overline{y})\!\cdot\! G 
    \big \rangle=\\
    &=\|x\|^2+2\|x\|^2\,\big(\la x,F\ra+\la x,(x\!\cdot\!\overline{y})\!\cdot\!G\ra+\la (x\!\cdot\!\overline{y})\!\cdot\!G,F\ra\big)+\|x\|^4\,\|F\|^2+\|x\|^2\,\|y\|^2\,\|G\|^2\,.
\end{align*}
The result then follows by noting that $\la (x\!\cdot\!\overline{y})\!\cdot\!G,F\ra) = \la x\!\cdot\!\overline{y},F\!\cdot\!\overline{G}\ra$
due to Equation \eqref{switch2} and that 
$\la x,(x\!\cdot\!\overline{y})\!\cdot\!G\ra =\la y,G\ra $ as a consequence of Equations \eqref{switch1} and \eqref{invr}.
$\blacksquare$
\end{proof}

\begin{theorem} \label{defg} The following data define a $\mathrm{G}_2$-equivariant Lie groupoid $\cG\Rightarrow \OO^2$, whose orbits coincide with the singular octonionic Hopf leaf decomposition $\cL_{OH}$.
\begin{itemize}
    \item The manifold of arrows $\cG$ is an open subset of $\OO^2\times\OO^2$ given by 
    \begin{equation*}
    \cG:=\OO^2\times\OO^2\setminus \cC
    \end{equation*}
    where
    
    $\cC=\left\{(F,G,x,y)\in\OO^2\times\OO^2\,\colon\,\lambda(F,G,x,y)= 0\right\}$.
    \item For every arrow $g=(F,G,x,y)\in \cG$, the source map $\s\colon\cG\to\OO^2$ and the target map $\ttt\colon\cG\to\OO^2$ are given by:
    \begin{align*}
    \s(g)&=(x,y)\, ,\\
    \ttt(g)&=\dfrac{1}{\lambda(g)}\left(x+\|x\|^2\,F+(x\!\cdot\!\overline{y})\!\cdot\!G\: ,\:y+\|y\|^2\,G+(y\!\cdot\!\overline{x})\!\cdot\!F\right) \, .
\end{align*}
\item The multiplication map $\m\colon \cG^{(2)}:=\left\{(g',g)\in \cG\ \!\!\times \cG\colon\, \s(g')=\ttt(g)\right\}\to\cG\,,$ is defined as follows: The product $\m(g',g)\equiv g'\!\cdot\!g$  of composable arrows $g'=(F',G',x,',y')$ and $g=(F,G,x,y)$ % with $\s(g')=(x',y')=\ttt(g)$ 
is given by
\begin{equation*}  g'\!\cdot\!g:=(F+\lambda(g)\!\cdot\!F',G+\lambda(g)\!\cdot\!G',x,y)  \, .
\end{equation*}
\item The unit map $\uuu\colon \OO^2\to \cG$ is given by associating the arrow $1_{(x,y)}:=(0,0,x,y)$ to every object $(x,y)\in \OO^2$.
\item The inverse $\iii \colon \cG\to \cG$ applied to an arrow $g=(F,G,x,y)$ gives $g^{-1} \equiv  \iii(g)$ by means of 
\begin{equation*}
     g^{-1}=(-F/\lambda(g),-G/\lambda(g),\ttt(g)) \,.
\end{equation*}
\end{itemize}
\end{theorem}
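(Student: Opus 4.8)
The plan is to verify the groupoid axioms directly from the defining formulas, isolating a single multiplicative identity for $\lambda$ as the only substantial computation; $\mathrm{G}_2$-equivariance and the description of the orbits then follow with little extra effort. First, $\cG$ is open in $\OO^2\times\OO^2$ precisely because $\lambda$ is a well-defined continuous real function, i.e.\ because the radicand in \eqref{lambda} is everywhere $\geq 0$: this is Lemma \ref{lemlresdef} together with its $y$-analogue (same proof, with $x,y$ and $F,G$ interchanged), which exhibit the radicand as $\|x+\|x\|^2F+(x\!\cdot\!\overline y)\!\cdot\!G\|^2/\|x\|^2$ when $x\neq0$ and as the mirror expression when $y\neq0$, the value at the origin being $1$. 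Summing the two identities also gives $\|\ttt(g)_1\|=\|x\|$ and $\|\ttt(g)_2\|=\|y\|$ for every $g\in\cG$. The maps $\s,\uuu$ are polynomial and $\ttt,\iii$ rational with denominator a power of $\lambda$, hence smooth on $\cG$; smoothness of $\m$, and the fact that $\m,\iii$ take values in $\cG$, will come for free from the cocycle identity below. The remaining preliminary I would prove is \emph{line preservation}: if $\s(g)=(x,y)$ lies on the $\OO$-line $l_m$, then so does $\ttt(g)$ --- for $x=0$ this is immediate since then $\ttt(g)_1=0$, and for $x\neq0$ one substitutes $y=m\!\cdot\!x$, uses $x\!\cdot\!\overline y=\|x\|^2\overline m$ and $y\!\cdot\!\overline x=\|x\|^2 m$ (valid because the associator annihilates real arguments) and \eqref{invl}--\eqref{invr}, and checks $\ttt(g)_2=m\!\cdot\!\ttt(g)_1$. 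Combined with the norm identities, this records exactly what the axioms will need: writing $(x',y'):=\ttt(g)$, one has $\|x'\|=\|x\|$, $\|y'\|=\|y\|$ and $x'\!\cdot\!\overline{y'}=x\!\cdot\!\overline y$ (equivalently, by conjugation, $y'\!\cdot\!\overline{x'}=y\!\cdot\!\overline x$); it also already shows that $\ttt$ maps each $\s$-fibre into a single leaf of $\cL_{OH}$.

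The heart of the argument is the \emph{cocycle identity}
\begin{equation*}
\lambda(g'\!\cdot\!g)=\lambda(g)\,\lambda(g')\qquad\text{for every composable pair }(g',g)\in\cG^{(2)}.
\end{equation*}
I would prove it by squaring both sides: expand $\lambda(g'\!\cdot\!g)^2$ from \eqref{lambda} with $F+\lambda(g)F'$ and $G+\lambda(g)G'$ substituted for $F,G$, insert the explicit expressions for $x',y'$ into $\lambda(g')^2$, and use $\|x'\|=\|x\|$, $\|y'\|=\|y\|$, $x'\!\cdot\!\overline{y'}=x\!\cdot\!\overline y$ from the previous paragraph; the claim then reduces to a polynomial identity in $F,G,F',G',x,y\in\OO$, provable by repeated application of the Moufang identities \eqref{moufang1}--\eqref{moufang3}, the half-associativity relation \eqref{semiasseqn2}, the conjugation formula \eqref{conj}, and \eqref{switch1}--\eqref{switch2}. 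I expect this to be the only genuinely laborious part of the proof; since it is a polynomial identity in the components over $\OO$, it can moreover be certified symbolically as a cross-check (Macaulay2, cf.\ Appendix \ref{appa}).

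Granting the cocycle identity, the remaining axioms are bookkeeping. From the formulas $\s(g'\!\cdot\!g)=(x,y)=\s(g)$, while expanding the first component of $\ttt(g'\!\cdot\!g)$ and using the preliminaries gives $\ttt(g'\!\cdot\!g)_1=\tfrac{\lambda(g)}{\lambda(g'\cdot g)}\big(x'+\|x'\|^2F'+(x'\!\cdot\!\overline{y'})\!\cdot\!G'\big)=\tfrac{\lambda(g)\lambda(g')}{\lambda(g'\cdot g)}\,\ttt(g')_1=\ttt(g')_1$, and the second component is the same after relabelling, so $\ttt(g'\!\cdot\!g)=\ttt(g')$. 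Associativity is then immediate, since both $(g''\!\cdot\!g')\!\cdot\!g$ and $g''\!\cdot\!(g'\!\cdot\!g)$ have arrow coordinates $\big(F+\lambda(g)F'+\lambda(g)\lambda(g')F'',\ \cdot\ ,x,y\big)$ --- the cocycle identity matching the $F''$-term and $\ttt(g'\!\cdot\!g)=\ttt(g')$ ensuring the second product is defined. The unit laws are trivial because $\lambda(1_{(x,y)})=1$ and $\ttt(1_{(x,y)})=(x,y)$. Finally $g^{-1}\!\cdot\!g=(0,0,x,y)=1_{\s(g)}$ by inspection; fed into the cocycle identity this forces $\lambda(g^{-1})=\lambda(g)^{-1}$, so $g^{-1}\in\cG$ and $\iii$ is a smooth map into $\cG$, and then a short computation with $\|x'\|=\|x\|$ and $x'\!\cdot\!\overline{y'}=x\!\cdot\!\overline y$ yields $\ttt(g^{-1})=(x,y)=\s(g)$, whence $g\!\cdot\!g^{-1}=1_{\ttt(g)}$. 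Thus $\cG\Rightarrow\OO^2$ is a Lie groupoid.

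It remains to identify the orbits and check $\mathrm{G}_2$-equivariance. Line preservation shows each orbit lies inside one leaf of $\cL_{OH}$. Conversely, for $(x,y)\in L_{m,r}$ with $x\neq0$ take $G=0$: then $x+\|x\|^2F=x\!\cdot\!(1+\overline x\!\cdot\!F)$ and $\lambda(F,0,x,y)=\|1+\overline x\!\cdot\!F\|$, so $\ttt(F,0,x,y)_1=x\!\cdot\!\tfrac{1+\overline x\cdot F}{\|1+\overline x\cdot F\|}$; as $F$ ranges over $\OO$ the octonion $1+\overline x\!\cdot\!F$ ranges over all of $\OO\setminus\{0\}$ (to stay in $\cG$), so $\ttt(F,0,x,y)_1$ ranges over the whole sphere $\{z\in\OO:\|z\|=\|x\|\}$ while $\ttt(F,0,x,y)_2=m\!\cdot\!\ttt(F,0,x,y)_1$ by line preservation; hence the orbit through $(x,y)$ is all of $L_{m,r}$. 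The leaves $L_{\infty,r}$ are treated symmetrically, and $\{0\}$ is its own orbit since $\ttt$ sends $\s^{-1}(0,0)$ to $(0,0)$. Finally, letting $\mathrm{G}_2=\Aut(\OO)$ act diagonally on $\OO^2\times\OO^2$ (simultaneously on $x,y,F,G$): automorphisms preserve the product, the conjugation and, being orthogonal, the inner product and norm, so $\lambda$ is $\mathrm{G}_2$-invariant, $\cG$ is preserved, all structure maps are $\mathrm{G}_2$-equivariant, and $\cL_{OH}$ is permuted via $\phi\!\cdot\! l_m=l_{\phi(m)}$.
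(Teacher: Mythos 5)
Your proposal follows the same architecture as the paper's proof: nonnegativity of the radicand (Lemma \ref{lemlresdef} and its $y$-analogue), which also gives $\|\ttt(g)_1\|=\|x\|$ and $\|\ttt(g)_2\|=\|y\|$; line preservation and the invariance of $x\!\cdot\!\overline{y}$ along arrows (Lemma \ref{sameslop}, Corollary \ref{corxy}); multiplicativity of $\lambda$ as the pivotal lemma; and then units, inverses, associativity, orbits and $\mathrm{G}_2$-equivariance essentially as in the paper (your orbit argument, covering a leaf by the targets over a single source with $G=0$, is a harmless variant of the paper's explicit arrow $g_{m,r}$ from a base point of the leaf). The genuinely different --- and weakest --- point is the cocycle identity itself: you propose to square it and verify a polynomial identity in the octonionic variables by repeated use of \eqref{semiasseqn2}, \eqref{conj}, \eqref{moufang1}--\eqref{moufang3}, to be certified symbolically, and you do not carry this out, so the central step of your argument is a plan rather than a proof. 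The paper proves Lemma \ref{lammult} in a few lines using exactly the identity you already isolated: for $x\neq0$, Lemma \ref{lemlresdef} gives $\lambda(g')=\|x'+\|x'\|^2F'+(x'\!\cdot\!\overline{y'})\!\cdot\!G'\|/\|x'\|$, and since $\|x'\|=\|x\|$, $x'\!\cdot\!\overline{y'}=x\!\cdot\!\overline{y}$ and $\lambda(g)\,x'=x+\|x\|^2F+(x\!\cdot\!\overline{y})\!\cdot\!G$, multiplying by $\lambda(g)>0$ and absorbing it inside the norm immediately yields $\lambda(g')\lambda(g)=\lambda(g'\!\cdot\!g)$ (the case $x=0$ uses the $y$-analogue, and $x=y=0$ is trivial). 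Your preliminaries thus already contain the clean proof; replacing the brute-force expansion by this observation would close the main gap.

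Two smaller omissions: the Lie groupoid axioms require $\s$ and $\ttt$ to be surjective submersions, and you never address submersivity of $\ttt$ (the paper constructs a local section of $\ttt$ through every arrow using $\iii$). Your own facts repair this in one line --- $\lambda(g^{-1})=\lambda(g)^{-1}$ and $\ttt(g^{-1})=\s(g)$ show that $\iii$ is a smooth involution of $\cG$ with $\s\circ\iii=\ttt$, so $\ttt$ inherits surjectivity and submersivity from the projection $\s$ --- but it must be said. Also, invoking the cocycle identity for the pair $(g^{-1},g)$ in order to conclude $\lambda(g^{-1})\neq0$ presupposes the identity for a pair not yet known to lie in $\cG^{(2)}$; this is harmless for your squared polynomial version, which holds on all of $\OO^2\times\OO^2$ fibred over $\ttt$, but it should be stated in that generality.
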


\vskip 2mm \noindent We first establish some identities for the structure maps of $\cG$ in the following lemmas, which will be used in the proof of the theorem.

\begin{lemma}\label{sameslop}
For every arrow $g\in\cG$, both $\s(g)$ and $\ttt(g)$ belong to the same octonionic line.
\end{lemma}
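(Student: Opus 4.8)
The plan is to show that an arrow $g = (F,G,x,y) \in \cG$ has source and target on the same octonionic line by directly identifying the slope. Recall $\s(g) = (x,y)$ lies on the line $l_m$ where $m$ is determined by $y = m\!\cdot\! x$ (if $x \neq 0$), i.e.\ $m = y\!\cdot\! x^{-1}$ by Equation \eqref{invr}; if $x = 0$ then $\s(g) \in l_\infty$. So the claim is equivalent to saying that $\ttt(g)$ lies on the \emph{same} line. Writing $\ttt(g) = \tfrac1{\lambda(g)}(X,Y)$ with $X = x+\|x\|^2 F + (x\!\cdot\!\overline y)\!\cdot\! G$ and $Y = y + \|y\|^2 G + (y\!\cdot\!\overline x)\!\cdot\! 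F$, the scalar factor $\tfrac1{\lambda(g)}$ is irrelevant for membership in a line, so it suffices to prove that $Y = m \!\cdot\! X$ with the \emph{same} $m = y\!\cdot\!x^{-1}$, equivalently (multiplying through by $\|x\|^2$ and using \eqref{invr}) that
\begin{equation*}
   \|x\|^2\, Y = (y\!\cdot\!\overline x)\!\cdot\! X \,.
\end{equation*}
First I would expand both sides using $\|x\|^2 Y = \|x\|^2 y + \|x\|^2\|y\|^2 G + \|x\|^2 (y\!\cdot\!\overline x)\!\cdot\! F$ and $(y\!\cdot\!\overline x)\!\cdot\! X = (y\!\cdot\!\overline x)\!\cdot\! x + \|x\|^2 (y\!\cdot\!\overline x)\!\cdot\! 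F + (y\!\cdot\!\overline x)\!\cdot\!((x\!\cdot\!\overline y)\!\cdot\! G)$; the $\|x\|^2 (y\!\cdot\!\overline x)\!\cdot\! F$ terms cancel on the nose. Then the identity reduces to the two octonionic identities $(y\!\cdot\!\overline x)\!\cdot\! x = \|x\|^2 y$ (which is \eqref{invr}/\eqref{a^2}) and $(y\!\cdot\!\overline x)\!\cdot\!((x\!\cdot\!\overline y)\!\cdot\! G) = \|x\|^2\|y\|^2 G$. The second of these is the real obstacle of the proof: because octonions are non-associative one cannot simply regroup, and one must invoke alternativity or a Moufang identity. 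Observe $y\!\cdot\!\overline x$ and $x\!\cdot\!\overline y = \overline{y\!\cdot\!\overline x}$ are conjugate, so with $p := y\!\cdot\!\overline x$ we need $p\!\cdot\!(\overline p\!\cdot\! G) = \|p\|^2 G$; since $\|p\|^2 = \|y\|^2\|x\|^2$ by \eqref{norm}, this is exactly Equation \eqref{invl} (the left-inverse identity applied to $p$), valid for $p \neq 0$.

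The edge cases are handled separately and quickly. If $x = 0$, then $X = 0$ and $\s(g) = (0,y) \in l_\infty$; one checks $X = 0 + 0 + 0 = 0$ directly, so $\ttt(g) = \tfrac1{\lambda(g)}(0,Y) \in l_\infty$ as well. If $x \neq 0$ but $y = 0$ (so $p = 0$ and the inverse identity is unavailable), then $\s(g) \in l_0$, and the displayed identity $\|x\|^2 Y = (y\!\cdot\!\overline x)\!\cdot\! X$ reads $\|x\|^2 Y = 0$; indeed $Y = 0 + 0 + 0 = 0$ directly, so $\ttt(g) \in l_0$. Finally, if both $x$ and $y$ vanish then $g$ is in the unit over the origin and there is nothing to prove; in fact $\s(g) = \ttt(g) = (0,0)$.

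Assembling these, the proof is short: first reduce the claim, via Definition \ref{lines} and Equation \eqref{invr}, to the identity $\|x\|^2 Y = (y\!\cdot\!\overline x)\!\cdot\! X$ on the non-scaled target; then expand both sides, cancel the common $F$-term, and finish using \eqref{a^2} (or \eqref{invr}) for the $x$-term and \eqref{invl} for the $G$-term; and finally dispose of the degenerate cases $x = 0$, $y = 0$ by direct substitution. I expect the only subtlety to be making sure the non-associative regrouping in $(y\!\cdot\!\overline x)\!\cdot\!((x\!\cdot\!\overline y)\!\cdot\! G) = \|x\|^2\|y\|^2 G$ is justified purely through the alternative-algebra identities collected in Section \ref{basform}, rather than any illegitimate use of associativity.
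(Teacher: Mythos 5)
Your proof is correct and takes essentially the same route as the paper: both reduce the claim to checking that the unscaled target components satisfy the slope relation for $m=y\cdot x^{-1}$ (with $x=0$ handled separately), and both resolve the only genuinely non-associative step via the identity $p\cdot(\overline{p}\cdot G)=\|p\|^2 G$ coming from \eqref{invl}/\eqref{semiasseqn2}. The only difference is organizational: the paper verifies the two summands $(\|x\|^2F,(y\cdot\overline{x})\cdot F)$ and $((x\cdot\overline{y})\cdot G,\|y\|^2G)$ lie in $l_m$ separately and invokes that $l_m$ is a vector space, while you expand the single identity $\|x\|^2Y=(y\cdot\overline{x})\cdot X$ directly and, more carefully than the paper, dispose of the degenerate cases $y=0$, $F=0$, $G=0$ explicitly.
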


\begin{proof}
    If $x=0$, both $\s(g)$ and $\ttt(g)$ belong to the octonionic line $l_\infty$. If $x$ is non-zero, put $m=y\!\cdot\!x^{-1}$. Then evidently  $\s(g)=(x,y) \in l_m$.   To show that also $\ttt(g)$ belongs to $l_m$, we first show that  $(\|x\|^2F,(y\!\cdot\!\overline{x})\!\cdot\!F)$ and $((x\!\cdot\!y)\!\cdot\!G,\|y\|^2G)$ belong to the octonionic line $l_m$:
\begin{equation*} ((y\!\cdot\!\overline{x})\!\cdot\!F)\!\cdot\!(\|x\|^2F)^{-1}=((y\!\cdot\!\overline{x})\!\cdot\!F)\!\cdot\!\frac{\overline{(\|x\|^2F)}}{\|x\|^4\!\cdot\!\|F\|^2}=(y\!\cdot\!\frac{x}{\|x\|^2})\!\cdot\!\frac{\|F\|^2}{\|F\|^2}\\
    =y\!\cdot\!x^{-1}  
\end{equation*}     
and
\begin{equation*} 
    (\|y\|^2G)\!\cdot\!((x\!\cdot\!\overline{y})\!\cdot\!G)^{-1}=(\|y\|^2G)\!\cdot\!\frac{\overline{G}\!\cdot\!(y\!\cdot\!\overline{x})}{\|x\|^2\|y\|^2\|G\|^2}
    =\frac{\|G\|^2}{\|G\|^2}\!\cdot\!(y\!\cdot\!\frac{\overline{y}}{\|x\|^2})
    =y\!\cdot\!x^{-1}\,.
\end{equation*}  
 As $l_m$ is a vector space, we obtain that also $\ttt(g)\in l_m$.\hfill $\blacksquare$
\end{proof}

\begin{cor}\label{corxy}
    For every arrow $g=(F,G,x,y)\in \cG$ with $\ttt(g)=(x',y')$, we have
    \begin{equation*}
        y\!\cdot\!\overline{x}=y'\!\cdot\!\overline{x'}\,.
    \end{equation*}
\end{cor}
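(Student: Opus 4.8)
The plan is to derive the identity directly from Lemma~\ref{sameslop}, distinguishing the cases $x=0$ and $x\neq 0$. The key observation is that the quantity $y\cdot\overline{x}$ is a fixed ``invariant'' of the octonionic line through $(x,y)$: indeed, if $(x,y)\in l_m$ with $x\neq 0$, then $m=y\cdot x^{-1}=y\cdot\overline{x}/\|x\|^2$, and for any other point $(x',y')$ on the same line one has $y'\cdot\overline{x'}=(m\cdot x')\cdot\overline{x'}=m\cdot(x'\cdot\overline{x'})=\|x'\|^2\,m$, where the middle step uses alternativity (or Equation~\eqref{moufang1}/\eqref{invr}). So $y\cdot\overline{x}$ and $y'\cdot\overline{x'}$ are both positive real multiples of the same slope $m$, and we just need to check that the scalar factor matches, i.e.\ that $\|x'\|^2=1$ when $(x',y')=\ttt(g)$ — but this is exactly what Lemma~\ref{lemlresdef} (together with the companion identity for the second component, which follows by the obvious $x\leftrightarrow y$, $F\leftrightarrow G$ symmetry) gives us: $\|x'\|^2=\|x+\|x\|^2F+(x\cdot\overline{y})\cdot G\|^2/\lambda(g)^2=\|x\|^2$ after also using $\|y'\|^2=\|y\|^2$ and $\|x\|^2+\|y\|^2$ being preserved. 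Actually it is cleaner to note $\lambda(g)^2=\|x\|^2+\|y\|^2$ follows from summing Lemma~\ref{lemlresdef} with its $y$-counterpart, wait — that gives $(\|x\|^2+\|y\|^2)\lambda(g)^2 = \|x'\|^2\lambda(g)^2+\|y'\|^2\lambda(g)^2$ only after dividing, so in fact one reads off $\|x'\|^2=\|x\|^2$ and $\|y'\|^2=\|y\|^2$ separately once one knows $\ttt$ is well-defined.

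Concretely, I would argue as follows. If $x=0$: then $\overline{x}=0$, so $y\cdot\overline{x}=0$; by Lemma~\ref{sameslop} the target $\ttt(g)=(x',y')$ lies on $l_\infty$, so $x'=0$ and hence $y'\cdot\overline{x'}=0$ as well, and the identity holds trivially. If $x\neq 0$: set $m=y\cdot x^{-1}$, so $(x,y)\in l_m$ and by Lemma~\ref{sameslop} also $(x',y')\in l_m$, i.e.\ $y'=m\cdot x'$. Then
\begin{align*}
y'\cdot\overline{x'}=(m\cdot x')\cdot\overline{x'}=m\cdot(x'\cdot\overline{x'})=\|x'\|^2\,m=\|x'\|^2\,\frac{y\cdot\overline{x}}{\|x\|^2}\,,
\end{align*}
using alternativity in the second step and $x^{-1}=\overline{x}/\|x\|^2$ together with Equation~\eqref{a^2} in the last. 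So it remains to show $\|x'\|^2=\|x\|^2$. From the definition of $\ttt$ and Lemma~\ref{lemlresdef},
\begin{align*}
\|x'\|^2=\frac{1}{\lambda(g)^2}\,\big\|x+\|x\|^2F+(x\cdot\overline{y})\cdot G\big\|^2=\frac{\|x\|^2}{\lambda(g)^2}\cdot\lambda(g)^2=\|x\|^2\,,
\end{align*}
where the middle equality is Lemma~\ref{lemlresdef} (the bracketed expression there is precisely $\lambda(g)^2$, since $\lambda(g)^2$ equals the radicand). This gives $y'\cdot\overline{x'}=y\cdot\overline{x}$, as claimed.

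The only place requiring a small amount of care is the step $(m\cdot x')\cdot\overline{x'}=m\cdot(x'\cdot\overline{x'})$: this is not a consequence of associativity but of the alternative law, and can be seen from Equation~\eqref{moufang1} with $a\to x'$ (or directly from the fact that the subalgebra generated by any two octonions is associative, so $\{m,x'\}$ generate an associative subalgebra containing $\overline{x'}$). I do not anticipate any genuine obstacle here — the statement is essentially an immediate corollary of Lemma~\ref{sameslop} once one recognizes $y\cdot\overline{x}=\|x\|^2\,m$ as the ``homogeneous slope'' of the line and feeds in the norm-preservation $\|x'\|^2=\|x\|^2$ that is already packaged inside Lemma~\ref{lemlresdef}.
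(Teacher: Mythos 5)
Your argument is correct and is essentially the paper's (implicit) one: the corollary is read off from Lemma~\ref{sameslop} (source and target lie on the same octonionic line, so $y'=m\cdot x'$) combined with the norm preservation $\|x'\|=\|x\|$ that Lemma~\ref{lemlresdef} packages, plus alternativity to compute $(m\cdot x')\cdot\overline{x'}=\|x'\|^2 m$. Note only that the passing claim in your first paragraph that $\lambda(g)^2=\|x\|^2+\|y\|^2$ is false, but you discard it and your final computation does not rely on it.
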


\begin{lemma}\label{lammult}
    For composable arrows $(g',g)\in \cG^{(2)}$ the map $\lambda$ is multiplicative, i.e.\ it satisfies $\lambda(g'\!\cdot g)=\lambda(g')\cdot\!\lambda(g)$.
\end{lemma}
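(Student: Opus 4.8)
The plan is to unwind both sides explicitly using the definition of the product and of $\lambda$, and to reduce the identity $\lambda(g'\!\cdot g)^2=\lambda(g')^2\,\lambda(g)^2$ to algebraic identities in $\OO$ that have already been recorded in Section~\ref{nda} (in particular \eqref{switch1}, \eqref{switch2}, \eqref{invr}, the Moufang identities, and the computation inside the proof of Lemma~\ref{lemlresdef}). Write $g=(F,G,x,y)$ and $g'=(F',G',x',y')$ with $(x',y')=\ttt(g)$, so by definition $g'\!\cdot g=(F+\lambda(g)F',\,G+\lambda(g)G',\,x,y)$. The first step is to expand $\lambda(g'\!\cdot g)^2$: the radicand is
\begin{align*}
\lambda(g'\!\cdot g)^2 &= 1+2\Big(\la x,F+\lambda(g)F'\ra+\la y,G+\lambda(g)G'\ra+\la x\!\cdot\!\overline{y},(F+\lambda(g)F')\!\cdot\!\overline{(G+\lambda(g)G')}\ra\Big)\\
&\quad +\|x\|^2\|F+\lambda(g)F'\|^2+\|y\|^2\|G+\lambda(g)G'\|^2\,.
\end{align*}
Collecting terms by powers of $\lambda(g)$, I would group this as $\lambda(g)^0$-terms $=\lambda(g)^2$, plus $\lambda(g)^2$ times exactly $\lambda(g')^2$, plus a cross term linear in $\lambda(g)$. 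So the content of the lemma is that the cross term vanishes, i.e.
\begin{align*}
\la x,F'\ra+\la y,G'\ra+\la x\!\cdot\!\overline{y},F\!\cdot\!\overline{G'}\ra+\la x\!\cdot\!\overline{y},F'\!\cdot\!\overline{G}\ra+\|x\|^2\la F,F'\ra+\|y\|^2\la G,G'\ra = 0\,.
\end{align*}

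The second and main step is to prove this cross-term identity. The key idea is that $x',y'$ are not free: $x'=\tfrac{1}{\lambda(g)}(x+\|x\|^2F+(x\!\cdot\!\overline{y})\!\cdot\!G)$ and $y'=\tfrac{1}{\lambda(g)}(y+\|y\|^2G+(y\!\cdot\!\overline{x})\!\cdot\!F)$, and the terms $\la x',F'\ra,\la y',G'\ra,\la x'\!\cdot\!\overline{y'},F'\!\cdot\!\overline{G'}\ra$ appearing inside $\lambda(g')^2$ carry hidden dependence on $F,G$. I expect the cleanest route is to observe that $\la x,F'\ra+\|x\|^2\la F,F'\ra+\la (x\!\cdot\!\overline{y})\!\cdot\!G,F'\ra=\lambda(g)\la x',F'\ra$ by the very definition of $x'$, and similarly for the $G'$-block using $y'$; then using \eqref{switch2} to rewrite $\la (x\!\cdot\!\overline{y})\!\cdot\!G,F'\ra=\la x\!\cdot\!\overline{y},F'\!\cdot\!\overline{G}\ra$ and $\la (y\!\cdot\!\overline{x})\!\cdot\!F,G'\ra=\la y\!\cdot\!\overline{x},G'\!\cdot\!\overline{F}\ra=\la x\!\cdot\!\overline{y},F\!\cdot\!\overline{G'}\ra$ (the last step using $\overline{a\cdot b}=\overline b\cdot\overline a$ and $\la a,b\ra=\la\overline b,\overline a\ra$), the required cross term becomes exactly $\lambda(g)\big(\la x',F'\ra+\la y',G'\ra\big)$ minus the mixed octonionic pairing — so I must still show that the remaining term $\la x\!\cdot\!\overline{y},F\!\cdot\!\overline{G'}\ra+\la x\!\cdot\!\overline{y},F'\!\cdot\!\overline{G}\ra$ matches $\lambda(g)\la x'\!\cdot\!\overline{y'},F'\!\cdot\!\overline{G'}\ra$ after substitution. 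This last matching is where one genuinely needs that $x'\!\cdot\!\overline{y'}=x\!\cdot\!\overline{y}$ (Corollary~\ref{corxy}) together with an expansion of $x'\!\cdot\!\overline{y'}$ via the formulas for $x',y'$, and this is the step I expect to require the Moufang/alternativity identities \eqref{moufang1}--\eqref{moufang3} and \eqref{conj} to handle the non-associative triple products $((x\!\cdot\!\overline y)\!\cdot\!G)\!\cdot\!\overline{((y\!\cdot\!\overline x)\!\cdot\!F)}$ cleanly.

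The main obstacle, therefore, is bookkeeping in the non-associative setting: the cross term, once one substitutes the explicit $x',y'$ into $\lambda(g')^2$, produces several triple and quadruple octonionic products that are \emph{not} manifestly real-part-symmetric, and one must repeatedly apply \eqref{switch1}, \eqref{switch2}, \eqref{invr}, and Corollary~\ref{corxy} to collapse them. I would organize this by first establishing, as a sub-lemma, the two ``projection'' identities $\lambda(g)\,x' = x+\|x\|^2F+(x\!\cdot\!\overline y)\!\cdot\!G$ and $\lambda(g)\,y' = y+\|y\|^2G+(y\!\cdot\!\overline x)\!\cdot\!F$ paired against $F'$ and $G'$ respectively, together with $x'\!\cdot\!\overline{y'}=x\!\cdot\!\overline y$, and then feed these into the expansion, so that the verification reduces to a finite check of real parts of octonionic monomials, each member of which is handled by a single cited identity. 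An alternative, more conceptual route worth mentioning is to invoke the geometric meaning: $\lambda(g)$ is the ratio $\|(x,y)\|/\|\ttt(g)\|$-type rescaling factor along the leaf (cf. Lemma~\ref{lemlresdef}), and composition of arrows composes the rescalings, so multiplicativity follows once one knows the groupoid structure is well-defined on the common leaf; but since the groupoid axioms themselves are being established using this lemma, the direct computational proof above is the safer one to present.
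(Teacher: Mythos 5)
Your reduction in the first step contains a genuine error. When you expand $\lambda(g'\!\cdot g)^2$ in powers of $\lambda(g)$, the coefficient of $\lambda(g)^2$ is only $2\la x\!\cdot\!\overline{y},F'\!\cdot\!\overline{G'}\ra+\|x\|^2\|F'\|^2+\|y\|^2\|G'\|^2$; it is \emph{not} ``exactly $\lambda(g')^2$'', because $\lambda(g')^2$ also contains $1+2\la x',F'\ra+2\la y',G'\ra$. Consequently your displayed cross-term identity is false: taking $y=G=G'=0$ it reads $\la x,F'\ra+\|x\|^2\la F,F'\ra=\la x+\|x\|^2F,F'\ra=\lambda(g)\la x',F'\ra$, which is nonzero in general. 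The correct content of the lemma, after collecting powers of $\lambda(g)$, is the pair of statements
\begin{align*}
\la x,F'\ra+\la y,G'\ra+\la x\!\cdot\!\overline{y},F\!\cdot\!\overline{G'}\ra+\la x\!\cdot\!\overline{y},F'\!\cdot\!\overline{G}\ra+\|x\|^2\la F,F'\ra+\|y\|^2\la G,G'\ra
=\lambda(g)\bigl(\la x',F'\ra+\la y',G'\ra\bigr)
\end{align*}
for the linear terms, together with the matching of the $\lambda(g)^2$-coefficients via $\|x'\|=\|x\|$, $\|y'\|=\|y\|$ and $x'\!\cdot\!\overline{y'}=x\!\cdot\!\overline{y}$ (Corollary \ref{corxy}). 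Your second paragraph partially rediscovers this, but then mis-assigns the terms: the mixed pairings $\la x\!\cdot\!\overline{y},F\!\cdot\!\overline{G'}\ra+\la x\!\cdot\!\overline{y},F'\!\cdot\!\overline{G}\ra$ are \emph{already fully absorbed} into $\lambda(g)(\la x',F'\ra+\la y',G'\ra)$ by \eqref{switch2} and $\la a,b\ra=\la\overline{a},\overline{b}\ra$, so there is no ``remaining term'' to be matched against $\lambda(g)\la x'\!\cdot\!\overline{y'},F'\!\cdot\!\overline{G'}\ra$; that term is instead cancelled at order $\lambda(g)^2$ against $\la x\!\cdot\!\overline{y},F'\!\cdot\!\overline{G'}\ra$ using Corollary \ref{corxy}. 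As written, the argument therefore does not close, although the expansion strategy can be repaired along these lines; note also that, contrary to your expectation, no Moufang identities are needed anywhere.

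The paper's proof is the ``projection identity'' you mention as a sub-lemma, but carried out at the level of norms, which avoids squaring and all cross-term bookkeeping: by Lemma \ref{lemlresdef} one has $\lambda(g')=\|x'+\|x'\|^2F'+(x'\!\cdot\!\overline{y'})\!\cdot\!G'\|/\|x'\|$ (for $x'\neq 0$; the case $x=0$ is handled by the analogous identity in the second slot), and substituting $\lambda(g)\,x'=x+\|x\|^2F+(x\!\cdot\!\overline{y})\!\cdot\!G$, $\|x'\|=\|x\|$ and $x'\!\cdot\!\overline{y'}=x\!\cdot\!\overline{y}$ into the numerator immediately gives $\lambda(g')\,\lambda(g)=\|x+\|x\|^2(F+\lambda(g)F')+(x\!\cdot\!\overline{y})\!\cdot\!(G+\lambda(g)G')\|/\|x\|=\lambda(g'\!\cdot g)$. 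You would do well to present that three-line computation rather than the quadratic expansion.
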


\begin{proof}
Using Lemma \ref{lemlresdef} and Corollary \ref{corxy}, we have
\begin{align*}
    \lambda(g')\!\cdot\!\lambda(g)&=\frac{\|x'+\|x'\|^2F'+(x'\!\cdot\!\overline{y'})\!\cdot\!G'\|}{\|x'\|}\!\cdot\!\lambda(g)\\
    &=\frac{\|\lambda(g)\!\cdot\!x'+\|x\|^2\lambda(g)\!\cdot\!F'+(x\!\cdot\!\overline{y})\!\cdot\!\lambda(g)\!\cdot\!G'\|}{\|x\|}\\
    &=\frac{\|(x+\|x\|^2F+(x\!\cdot\!\overline{y})\!\cdot\!G)+\|x\|^2\lambda(g)\!\cdot\!F'+(x\!\cdot\!\overline{y})\!\cdot\!\lambda(g)\!\cdot\!G'\|}{\|x\|}\\
    &=\frac{\|x+\|x\|^2\cdot (F+\lambda(g)\!\cdot\!F')+(x\!\cdot\!\overline{y})\!\cdot\! (G+\lambda(g)\!\cdot\!G')\|}{\|x\|}\\
    &=\lambda(F+\lambda(g)\!\cdot\!F',G+\lambda(g)\!\cdot\!G,x,y)=\lambda(g'\!\cdot\!g)\,
\end{align*}\hfill $\blacksquare$
\end{proof}

\begin{lemma}\label{corass}
    The multiplication map $\m\colon \cG^{(2)}\to\cG$ is associative.
\end{lemma}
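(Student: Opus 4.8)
The goal is to verify that $\m$ satisfies $(g''\!\cdot\! g')\!\cdot\! g = g''\!\cdot\!(g'\!\cdot\! g)$ for any triple of composable arrows $g=(F,G,x,y)$, $g'=(F',G',x',y')$, $g''=(F'',G'',x'',y'')$ with $\s(g')=\ttt(g)$ and $\s(g'')=\ttt(g')$. Since the multiplication formula only changes the arrow coordinates $(F,G)$ and keeps the object coordinates $(x,y)$ of the right-most arrow fixed, the base-point bookkeeping is automatic; the real content is an identity among the arrow coordinates that is governed entirely by the cocycle-type behaviour of $\lambda$ established in Lemma \ref{lammult}.

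\textbf{Key steps.} First I would compute $g'\!\cdot\! g = (F+\lambda(g)F',\,G+\lambda(g)G',\,x,y)$ and then left-multiply by $g''$, obtaining for the first arrow coordinate
\[
 (F+\lambda(g)F') + \lambda(g'\!\cdot\! g)\,F'' = F + \lambda(g)F' + \lambda(g')\lambda(g)\,F'',
\]
where I have used the multiplicativity $\lambda(g'\!\cdot\! g)=\lambda(g')\lambda(g)$ from Lemma \ref{lammult}. On the other side, $g''\!\cdot\! g' = (F'+\lambda(g')F'',\,G'+\lambda(g')G'',\,x',y')$, and right-multiplying by $g$ gives first arrow coordinate
\[
 F + \lambda(g)\,\big(F'+\lambda(g')F''\big) = F + \lambda(g)F' + \lambda(g)\lambda(g')\,F''.
\]
These two expressions agree because $\lambda$ takes values in the commutative ring $\R$, so $\lambda(g')\lambda(g)=\lambda(g)\lambda(g')$; the $G$-coordinates are handled identically, and the object coordinates are $(x,y)$ on both sides. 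Hence $\m$ is associative. I should also note in passing that all arrows appearing are genuinely in $\cG$ (i.e.\ $\lambda\neq 0$), which follows since $\lambda(g'\!\cdot\! g)=\lambda(g')\lambda(g)$ is a product of nonzero reals, so composability is preserved.

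\textbf{Main obstacle.} Honestly there is very little obstacle here: the non-associativity of $\OO$ plays no role because $\lambda$ and the composition law for $(F,G)$ involve only scalar multiplication by reals, never octonion products among the arrow data. The only thing one must be careful about is that $\lambda(g'\!\cdot\! g)$ — which is what the formula for $g''\!\cdot\!(g'\!\cdot\! g)$ forces one to evaluate — really does equal $\lambda(g')\lambda(g)$ rather than something requiring a fresh computation; this is exactly the content of Lemma \ref{lammult}, whose proof in turn used Lemma \ref{lemlresdef} and Corollary \ref{corxy} to absorb the octonionic subtleties. So the proof is essentially a two-line reduction to Lemma \ref{lammult} together with commutativity and associativity of multiplication in $\R$.
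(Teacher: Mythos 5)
Your proposal is correct and follows essentially the same route as the paper: both sides are expanded using the definition of $\m$, and the key step is replacing $\lambda(g'\!\cdot\!g)$ by $\lambda(g')\,\lambda(g)$ via Lemma \ref{lammult}, after which the two expressions coincide by commutativity of multiplication in $\R$. Your added remark that the products remain in $\cG$ because $\lambda$ of a product is a product of nonzero reals is a harmless bonus not spelled out in the paper.
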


\begin{proof}
    Consider the arrows $g'',g',g\in \cG$, such that $\s(g'')=\ttt(g'')$ and $\s(g')=\ttt(g)$. Using Lemma \ref{lammult}, we have
    \begin{align*}
    g''\!\cdot\!(g'\!\cdot\!g)&=g''\!\cdot\!(F+\lambda(g)\!\cdot\!F',G+\lambda(g)\!\cdot\!G',x,y)\\
    &=(F+\lambda(g)\!\cdot\!F'+\lambda(g'.g)\!\cdot\!F'',G+\lambda(g)\!\cdot\!G'+\lambda(g'\!\cdot\!g)\!\cdot\!G'',x,y)\\   
    &=(F+\lambda(g)\!\cdot\!F'+\lambda(g)\!\cdot\!\lambda(g')\!\cdot\!F'',G+\lambda(g)\!\cdot\!G'+\lambda(g)\!\cdot\!\lambda(g')\!\cdot\!G'',x,y)\\
    &=(F'+\lambda(g')\!\cdot\!F'',G'+\lambda(g')\!\cdot\!G'',x',y')\!\cdot\!g\\
    &=(g''\!\cdot\!g')\!\cdot\!g\,.
\end{align*}
\hfill $\blacksquare$
\end{proof}

\begin{lemma}\label{targsame}
    For composable arrows $(g',g)\in \cG^{(2)}$, we have $\ttt(g'\!\cdot g)=\ttt(g')$ and $\s(g'\!\cdot\!g)=\s(g)$.
\end{lemma}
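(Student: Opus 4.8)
The statement to prove is Lemma~\ref{targsame}: for composable arrows $(g',g)\in\cG^{(2)}$ one has $\ttt(g'\!\cdot\!g)=\ttt(g')$ and $\s(g'\!\cdot\!g)=\s(g)$. The second equality is immediate from the definition of the product: if $g=(F,G,x,y)$ and $g'=(F',G',x',y')$, then $g'\!\cdot\!g=(F+\lambda(g)\!\cdot\!F',\,G+\lambda(g)\!\cdot\!G',\,x,y)$, so its source is $(x,y)=\s(g)$ by the formula for $\s$. All the work is in the first equality.

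\textbf{Main step: $\ttt(g'\!\cdot\!g)=\ttt(g')$.} The plan is a direct computation of $\ttt(g'\!\cdot\!g)$ from the target formula, using multiplicativity of $\lambda$ (Lemma~\ref{lammult}) and the composability constraint $\s(g')=\ttt(g)$, i.e.\ $(x',y')=\ttt(g)=\tfrac{1}{\lambda(g)}\big(x+\|x\|^2F+(x\!\cdot\!\overline{y})\!\cdot\!G\,,\,y+\|y\|^2G+(y\!\cdot\!\overline{x})\!\cdot\!F\big)$. Writing $h:=g'\!\cdot\!g=(F'',G'',x,y)$ with $F''=F+\lambda(g)F'$ and $G''=G+\lambda(g)G'$, I compute the first component of $\ttt(h)$:
\begin{align*}
\lambda(h)\cdot(\ttt(h))_1 &= x+\|x\|^2 F''+(x\!\cdot\!\overline{y})\!\cdot\!G''\\
&= \big(x+\|x\|^2F+(x\!\cdot\!\overline{y})\!\cdot\!G\big)+\lambda(g)\big(\|x\|^2F'+(x\!\cdot\!\overline{y})\!\cdot\!G'\big)\\
&= \lambda(g)\,x'+\lambda(g)\big(\|x\|^2F'+(x\!\cdot\!\overline{y})\!\cdot\!G'\big).
\end{align*}
Now I use Corollary~\ref{corxy}, $y\!\cdot\!\overline{x}=y'\!\cdot\!\overline{x'}$, and the norm identity $\lambda(g)^2\|x\|^2=\|x+\|x\|^2F+(x\!\cdot\!\overline{y})\!\cdot\!G\|^2=\|x'\|^2\lambda(g)^2$ from Lemma~\ref{lemlresdef} (hence $\|x\|=\|x'\|$ when $x\neq 0$), to rewrite $\|x\|^2F'$ as $\|x'\|^2F'$ and $(x\!\cdot\!\overline{y})\!\cdot\!G'=(\overline{y\!\cdot\!\overline{x}})\!\cdot\!G'=(\overline{y'\!\cdot\!\overline{x'}})\!\cdot\!G'=(x'\!\cdot\!\overline{y'})\!\cdot\!G'$. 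This turns the bracket into exactly $\lambda(g)\big(x'+\|x'\|^2F'+(x'\!\cdot\!\overline{y'})\!\cdot\!G'\big)=\lambda(g)\lambda(g')\,(\ttt(g'))_1=\lambda(h)\,(\ttt(g'))_1$, using Lemma~\ref{lammult}. Dividing by $\lambda(h)\neq 0$ gives $(\ttt(h))_1=(\ttt(g'))_1$. The second component is handled by the symmetric computation, replacing $x\leftrightarrow y$, $F\leftrightarrow G$ and using $\overline{x\!\cdot\!\overline{y}}=y\!\cdot\!\overline{x}$ together with $\|y\|=\|y'\|$ (from the $y$-version of Lemma~\ref{lemlresdef}, or from $\|x\|^2+\|y\|^2$ being preserved along a leaf).

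\textbf{The degenerate case and the obstacle.} The one genuine subtlety is the case $x=0$ (and symmetrically the manipulations when $x'=0$), where $x^{-1}$ and the step $\|x\|=\|x'\|$ need separate attention: if $x=0$ then $\lambda(g)^2=1+2\la y,G\ra+\|y\|^2\|G\|^2=\|y+\|y\|^2G\|^2/\|y\|^2$ (for $y\neq0$), and one checks directly that $x'=0$, so the first components of all targets vanish and only the $y$-computation above is needed; the fully degenerate sub-case $x=y=0$ forces the arrow to be a unit and is trivial. The main obstacle is thus purely bookkeeping: making sure the substitutions $\|x\|^2F'\leadsto\|x'\|^2F'$ and $(x\!\cdot\!\overline{y})\!\cdot\!G'\leadsto(x'\!\cdot\!\overline{y'})\!\cdot\!G'$ are justified uniformly (including when some of $x,y,x',y'$ vanish), for which I would first record the auxiliary identities $\|x\|=\|x'\|$, $\|y\|=\|y'\|$ and $y\!\cdot\!\overline{x}=y'\!\cdot\!\overline{x'}$ as consequences of Lemmas~\ref{lemlresdef},~\ref{sameslop} and Corollary~\ref{corxy}, and only then plug into the target formula. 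No non-associativity gymnastics beyond $\overline{a\!\cdot\!b}=\overline{b}\!\cdot\!\overline{a}$ is required here, since all the octonionic products appearing are left untouched by the rescaling.
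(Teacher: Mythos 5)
Your proposal is correct and follows essentially the same route as the paper: expand $\ttt(g'\!\cdot\!g)$ from the target formula, factor out $\lambda(g)$ using the composability relation $(x',y')=\ttt(g)$, and convert $\|x\|^2F'$, $(x\!\cdot\!\overline{y})\!\cdot\!G'$ into their primed counterparts via $\|x\|=\|x'\|$, $\|y\|=\|y'\|$ (Lemma~\ref{lemlresdef}) and Corollary~\ref{corxy}, finishing with the multiplicativity of $\lambda$ (Lemma~\ref{lammult}); the source equality is read off the definition, exactly as in the paper. Your extra care with the degenerate cases is fine (and more explicit than the paper), apart from the minor slip that $x=y=0$ does not force $g$ to be a unit arrow—though the claim is still trivially true there since both targets are $(0,0)$.
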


\begin{proof}
Let $g'=(F',g',x',y')$ and $g=(F,G,x,y)$ with $(x',y')=t(F,G,x,y)$. Lemma \ref{lammult} together with Corollary \ref{corxy} and $\|x'\|=\|x\|$ and $\|y'\|=\|y\|$ imply that
\begin{align*}
    \ttt(g'\!\cdot\!g)&=\ttt(F+\lambda(g)\!\cdot\!F',G+\lambda(g)\!\cdot\!G,x,y)\\
    &=\frac{1}{\lambda(g')\!\cdot\!\lambda(g)}\left(x+\|x\|^2\,(F+\lambda(g)\!\cdot\!F')+(x\!\cdot\!\overline{y})\!\cdot\!(G+\lambda(g)\!\cdot\!G')\: ,\:y+\|y\|^2\,G+(y\!\cdot\!\overline{x})\!\cdot\!F\right)\\
    &=\frac{1}{\lambda(g')}\left(x'+\|x'\|^2F'+(x'\!\cdot\!\overline{y'})\!\cdot\!G'\: ,\:y'+\|y'\|^2\,G'+(y\!\cdot\!\overline{x})\!\cdot\!F'\right)=\ttt(g')\,.
\end{align*}
The second identity is obvious from the definition.
\hfill $\blacksquare$
\end{proof}

\begin{lemma}\label{lemtoi}
    For every arrow $g\in \cG$, one has $g^{-1}\!\cdot\!g=1_{\s(g)}$. 
    In addition, $\ttt \circ \iii = \s$.
\end{lemma}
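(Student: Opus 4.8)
The plan is to reduce both assertions to the explicit structure maps of Theorem \ref{defg}, together with Corollary \ref{corxy}, Lemma \ref{lemlresdef}, and the $(F,x)\leftrightarrow(G,y)$-symmetric version of Lemma \ref{lemlresdef} (valid because $\lambda$ is invariant under the swap $(F,x)\leftrightarrow(G,y)$, using $\la y\cdot\overline{x},G\cdot\overline{F}\ra=\la x\cdot\overline{y},F\cdot\overline{G}\ra$). A first consequence of Lemma \ref{lemlresdef} and its twin is that the target map preserves the two ``slot-norms'': writing $\ttt(g)=(x',y')$, the formula for $\ttt(g)$ combined with Lemma \ref{lemlresdef} gives $\|x'\|=\|x\|$ and, symmetrically, $\|y'\|=\|y\|$.

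The main step is to check that $\iii$ maps $\cG$ into $\cG$ and to identify $\ttt\circ\iii$. Fix $g=(F,G,x,y)\in\cG$ and write $g^{-1}=(F'',G'',x',y')$ with $F''=-F/\lambda(g)$, $G''=-G/\lambda(g)$ and $(x',y')=\ttt(g)$. Conjugating the identity $y'\cdot\overline{x'}=y\cdot\overline{x}$ of Corollary \ref{corxy} gives $x'\cdot\overline{y'}=x\cdot\overline{y}$; substituting this, $\|x'\|=\|x\|$, and $x'=\lambda(g)^{-1}\bigl(x+\|x\|^2F+(x\cdot\overline{y})\cdot G\bigr)$ into $x'+\|x'\|^2F''+(x'\cdot\overline{y'})\cdot G''$ makes all $F$- and $G$-contributions cancel, leaving $x/\lambda(g)$; the symmetric computation gives $y/\lambda(g)$ for the second slot. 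Hence $\lambda(g^{-1})\cdot\ttt(g^{-1})=\lambda(g)^{-1}(x,y)$. Taking norms, and using that $\ttt$ preserves slot-norms applied now to $g^{-1}$ (whose slot-norms are $\|x'\|=\|x\|$ and $\|y'\|=\|y\|$), we obtain $\lambda(g^{-1})=\lambda(g)^{-1}>0$ and $\ttt(g^{-1})=(x,y)=\s(g)$ when $(x,y)\neq(0,0)$; the case $(x,y)=(0,0)$ is trivial since then $\lambda(g)=1$. In particular $g^{-1}\in\cG$ and $\ttt\circ\iii=\s$.

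It then remains to compute $g^{-1}\cdot g$. Since $\s(g^{-1})=(x',y')=\ttt(g)$ and $g^{-1}\in\cG$, the pair $(g^{-1},g)$ lies in $\cG^{(2)}$, so the multiplication formula of Theorem \ref{defg} applies and yields at once
\[
 g^{-1}\cdot g=\bigl(F+\lambda(g)\cdot F'',\ G+\lambda(g)\cdot G'',\ x,y\bigr)=\bigl(F-F,\ G-G,\ x,y\bigr)=1_{\s(g)}.
\]
One could alternatively deduce $\ttt\circ\iii=\s$ from this identity via Lemma \ref{targsame}, using $\ttt(1_{(x,y)})=(x,y)$.

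The only step that is not pure bookkeeping is the verification $\lambda(g^{-1})=\lambda(g)^{-1}$ (equivalently, well-definedness of $\iii$): this is where Corollary \ref{corxy} and Lemma \ref{lemlresdef}, hence the underlying octonionic identities, genuinely enter, together with the sign tracking; once that cancellation is in hand, everything else is a single substitution into the product formula.
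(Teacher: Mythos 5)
Your proposal is correct, but it runs in the opposite direction from the paper. The paper's proof is minimal: it observes that $g^{-1}\!\cdot\!g=(F-F,G-G,x,y)=1_{(x,y)}$ is immediate from the multiplication formula, and then obtains $\ttt\circ\iii=\s$ as a one-line consequence of Lemma \ref{targsame}, namely $\ttt\circ\iii(g)=\ttt(g^{-1}\!\cdot\!g)=\ttt(1_{(x,y)})=\s(g)$ --- exactly the ``alternative'' you mention in your last paragraph. You instead prove $\ttt\circ\iii=\s$ first, by the explicit cancellation $x'+\|x'\|^2F''+(x'\!\cdot\!\overline{y'})\!\cdot\!G''=x/\lambda(g)$ using Corollary \ref{corxy} and the slot-norm identity of Lemma \ref{lemlresdef} (and its $(F,x)\leftrightarrow(G,y)$ twin, whose justification via $\la y\!\cdot\!\overline{x},G\!\cdot\!\overline{F}\ra=\la x\!\cdot\!\overline{y},F\!\cdot\!\overline{G}\ra$ is right), and only then invoke the product formula. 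What your route buys is a point the paper glosses over: you verify that $\lambda(g^{-1})=\lambda(g)^{-1}>0$, i.e.\ that $\iii$ really lands in $\cG$, which is implicitly needed even for the paper's argument (composability of $(g^{-1},g)$ requires $g^{-1}\in\cG$) and which the paper only extracts later, in Corollary \ref{cortoi}, from Lemma \ref{lammult}; you also treat the origin $(x,y)=(0,0)$ separately, whereas the paper's surrounding lemmas tacitly divide by $\|x\|$. What the paper's route buys is brevity: once Lemma \ref{targsame} is available, no further octonionic identities are needed. One small caution in your write-up: the phrase ``using that $\ttt$ preserves slot-norms applied to $g^{-1}$'' should be read as an application of the unconditional identity of Lemma \ref{lemlresdef} to the quadruple $(F'',G'',x',y')$, not of the target map itself, since $\ttt(g^{-1})$ is only defined after you know $\lambda(g^{-1})\neq 0$; stated that way there is no circularity and your argument is complete.
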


\begin{proof}
For every arrow $g=(F,G,x,y)$ $g^{-1}\!\cdot\!g=(0,0,x,y)=1_{(x,y)}$ is evident by the definition.  Lemma \ref{targsame} implies, moreover, that $\ttt \circ \iii(g)=\ttt(g^{-1}\!\cdot\!g)=\ttt(1_{(x,y)})= \s(g)$ %$\tfrac{1}{\lambda(g')}=\lambda(g)$. 
    \hfill $\blacksquare$
\end{proof}
\begin{cor} \label{cortoi}
    For every arrow $g=(F,G,x,y)$, the composition   $g\!\cdot\!g^{-1}$ is defined and equal to $1_{\ttt(g)}$.
\end{cor}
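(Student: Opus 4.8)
The plan is to deduce this directly from Lemma~\ref{lemtoi} and the multiplicativity of $\lambda$ established in Lemma~\ref{lammult}, with no further octonionic manipulation required.

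First I would verify that $g\!\cdot\!g^{-1}$ makes sense, i.e.\ that $(g,g^{-1})\in\cG^{(2)}$. By the definition of the inverse in Theorem~\ref{defg} one has $\s(g^{-1})=\ttt(g)\in\OO^2$, and the second assertion of Lemma~\ref{lemtoi} gives $\ttt(g^{-1})=\ttt(\iii(g))=\s(g)$. Hence $\s(g)=\ttt(g^{-1})$, so the product is composable as soon as we know $g^{-1}\in\cG$; this last point will follow from the computation of $\lambda(g^{-1})$ below.

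The one substantive step is to show $\lambda(g^{-1})=\lambda(g)^{-1}$. Since $(g^{-1},g)\in\cG^{(2)}$ and $g^{-1}\!\cdot\!g=1_{\s(g)}$ by Lemma~\ref{lemtoi}, applying Lemma~\ref{lammult} to the pair $(g^{-1},g)$ yields
\[
\lambda(g^{-1})\cdot\lambda(g)=\lambda(g^{-1}\!\cdot\!g)=\lambda\big(1_{(x,y)}\big)=\lambda(0,0,x,y)=1\,,
\]
the last equality being immediate from formula~\eqref{lambda}. As $\lambda(g)\neq 0$ for every $g\in\cG$, we get $\lambda(g^{-1})=\lambda(g)^{-1}\neq 0$, which in particular shows $g^{-1}\in\cG$ and so legitimizes the previous paragraph.

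Finally I would plug into the definition of the multiplication map $\m$, taking $g=(F,G,x,y)$ as the outer (left) arrow and $g^{-1}=(-F/\lambda(g),-G/\lambda(g),\ttt(g))$ as the inner (right) arrow:
\[
g\!\cdot\!g^{-1}=\Big(-\tfrac{F}{\lambda(g)}+\lambda(g^{-1})\!\cdot\!F,\ -\tfrac{G}{\lambda(g)}+\lambda(g^{-1})\!\cdot\!G,\ \ttt(g)\Big)=(0,0,\ttt(g))=1_{\ttt(g)}\,,
\]
where the middle equality uses $\lambda(g^{-1})=\lambda(g)^{-1}$. This is the claim. The only points requiring care are the asymmetric shape of $\m$ (the rescaling factor multiplies the arrow-coordinates of the outer arrow, while the source of the product is inherited from the inner arrow) and the identity $\lambda(g^{-1})=\lambda(g)^{-1}$, which is itself a consequence of Lemmas~\ref{lammult} and~\ref{lemtoi} rather than something to be checked by hand.
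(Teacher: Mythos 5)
Your proposal is correct and follows the same route as the paper: composability comes from Lemma \ref{lemtoi} (which gives $\ttt(g^{-1})=\s(g)$), and the identity $\lambda(g^{-1})=1/\lambda(g)$, obtained from Lemma \ref{lammult} applied to $g^{-1}\!\cdot\!g=1_{\s(g)}$, makes the arrow coordinates of $g\!\cdot\!g^{-1}$ cancel to give $1_{\ttt(g)}$. Your extra remark that this also confirms $\lambda(g^{-1})\neq 0$ is a harmless (indeed slightly more careful) addition to the paper's argument.
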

\begin{proof}
    The composability of two arrows is a direct consequence of Lemma \ref{lemtoi}. For $g\!\cdot\!g^{-1}=(0,0,\ttt(g))=1_{t(g)}$ use that $\lambda(g^{-1})=\tfrac{1}{\lambda(g)}$  by Lemma \ref{lammult}.  \hfill $\blacksquare$
\end{proof}

\vskip 2mm \noindent {\bf Proof of Theorem $\ref{defg}$.}
    It is clear that $\cG$ is a smooth manifold and the maps $\s,\ttt,\iii$ and $\uuu$ are smooth. In addition, Lemmas \ref{corass}, \ref{targsame} and \ref{lemtoi} imply that the structure maps satisfy the compatibility conditions in the definition of a groupoid. 
    
    \vskip 2mm \noindent The source map is obviously a surjective submersion. The target map is a surjection as well, cf.\ Lemma \ref{lemtoi}. 
    In order to prove that the target map $\ttt$ is also a submersion, it is sufficient to show that for every arrow $g\in \cG$,  there is a local section passing through it. Let $g=(F_0,G_0,x_0,y_0)\in \cG$ and consider its inverse arrow $g^{-1}=(F,G,x_1,y_1)$. There exists an open neighborhood $U\subset \OO^2$ with $(F,G,x,y)\in \cG$ for all $(x,y)\in U$. We now define a smooth map $\sigma\colon U\to \cG$ by means of $\sigma(x,y):=\iii(F,G,x,y)$. We have $\sigma(x_1,y_1)=g$ and
\begin{equation}
\ttt\circ\sigma(x,y)=\ttt\circ\iii(F,G,x,y)=\s(F,G,x,y)=(x,y)\,,
\end{equation}
where we used Lemma \ref{lemtoi} to obtain the last equality. This implies that $\sigma$ is a local section passing through $g$, and consequently, $\ttt$ is a submersion. As a result, $\cG^{(2)}$ is a smooth manifold and the multiplication map $\m$ is smooth.
This completes the proof that $\cG\Rightarrow\OO^2$ is a Lie groupoid.

\vskip 2mm \noindent It remains to prove that the orbits of $\cG\Rightarrow\OO^2$ coincide with the leaves in $\cL_{OH}$. According to Lemma \ref{lemlresdef} one has $\|\ttt(g)\|=\|\s(g)\|$ for all $g \in \cG$. Together with Lemma \ref{sameslop} this implies that the orbits of $\cG$ are included in the leaves of $\cL_{OH}$. In addition, for every $(x,m\!\cdot\!x)\in L_{m,r}$, we consider the arrow 
    \begin{equation*}
        g_{m,r}=(\frac{x-1}{\|x\|},0,\|x\|,m\!\cdot\!\|x\|)\in \cG
    \end{equation*}
    which satisfies $\s(g_{m,r})=(\|x\|,m\!\cdot\!\|x\|)$ and
    \begin{align*}
    \ttt(g_{m,r})&=\tfrac{1}{\lambda(g_{m,r})}(\|x\|+\|x\|\!\cdot\!(x-1),m\!\cdot\!\|x\|+m\!\cdot\!\|x\|\!\cdot\!(x-1))\\
    &=\tfrac{\|x\|}{\lambda(g_{m,r})}\!\cdot\!(x,m\!\cdot\!x)=(x,m\!\cdot\!x)\,.
    \end{align*}
The last equality holds since 
\begin{align*}
    \lambda(g_{m,r})&=\sqrt{1+2\la \|x\|,\frac{x-1}{\|x\|}\ra+\|x\|^2\frac{\|x-1\|^2}{\|x\|^2}}\\
    &=\sqrt{1+2\la 1, x-1\ra+\|x-1\|^2}\\
    &=\sqrt{\|1+(x-1)\|^2}=\|x\|\,.
\end{align*}
Similarly for $(0,y) \in L_{\infty,r}$: The source of $g=(0,\frac{y-1}{\|y\|,0,\|y\|}$ is $(0,\|y\|)$ while its target is $(0,y)$.
Consequently, all points in a leaf of $\cL_{OH}$ can be joined to a single point of the same leaf by an arrow in  $\cG\Rightarrow\OO^2$.

\vskip 2mm \noindent To prove the $\mathrm{G}_2$-equivariance of the Lie groupoid, note that $\mathrm{G}_2$ is the automorphism group of the octonion algebra $\mathbb{O}$ \cite{C14}, meaning that every element $A \in \mathrm{G}_2$ acts on $\mathbb{O}$ such that:  
\[
A(x \cdot y) = A(x) \cdot A(y), \quad \forall x, y \in \mathbb{O}.
\]  
This directly implies that $A$ also preserves the multiplicative identity $1$, the conjugation operation, and the standard inner product on $\mathbb{O}$. The group $\mathrm{G}_2$ acts component-wise on $\mathbb{O}^2$ and $\mathbb{O}^2 \times \mathbb{O}^2$. By construction, the rescaling map $\lambda$ is $\mathrm{G}_2$-invariant. Consequently, all the structure maps of the Lie groupoid $\mathcal{G}$ are $\mathrm{G}_2$-equivariant.  
\hfill $\blacksquare$   

\vskip 2mm \noindent The same construction can be applied for any of the  normed division algebras $\DD$, i.e.\ also for  $\A \in \{\R,\C, \HH\}$. In the latter cases it then yields a Lie groupoid $\mathcal{G}_\A\Rightarrow \A^2$ for which the orbits give precisely the leaves of the singular Hopf foliation  associated to the respective associative division algebra $\A$. The dimension of $\mathcal{G}_\A$ equals to $4\dim(\A)$, i.e.\ $4$, $8$, and $16$, respectively. This is  significantly bigger than the ones found for
the corresponding action Lie groupoids
$\mathbb{A}^2 \rtimes U_\A \Rightarrow \A^2$, where 
$U_\A=\left\{u\in\A\,\colon\,\|u\|=1\right\}$ denotes the group of unitary elements found in the first colomn of Table \ref{T1}; these Lie groupoids have  dimensions $2$, $5$, and $11$, respectively. This is in contrast to $\cG=\cG_\OO$ whose dimension $32$ turns out to be minimal (see Theorem \ref{mindim} below). 
The relation of the Lie groupoids $\cG_\A$ with the corresponding action groupoids is clarified in the following proposition. 

\begin{prop}
    Let $\mathbb{A}^2 \rtimes U_\A \Rightarrow \A^2$ be the action groupoid for the diagonal right action of unitary elements $U_\A$ on $\A^2$.
    Then the smooth map $\varphi \colon \cG_\A\to \mathbb{A}^2 \rtimes U_\A$ given by
\begin{equation*}
\varphi(F,G,x,y) := \left((x,y),\frac{1+\overline{x}\cdot F+\overline{y}\cdot G}{\|1+\overline{x}\cdot F+\overline{y}\cdot G\|}\right)\,,
\end{equation*}
defines a Lie groupoid morphism covering the identity on $\A^2$.
=
\end{prop}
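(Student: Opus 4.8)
The plan is to verify directly that $\varphi$ intertwines all structure maps of $\cG_\A$ and of the action groupoid; the whole argument rests on a single reformulation of the rescaling function together with associativity of $\A$. First I would prove the identity
\[
\|1+\overline{x}\cdot F+\overline{y}\cdot G\|^2 \;=\; \lambda(F,G,x,y)^2
\]
for all $(F,G,x,y)\in\A^2\times\A^2$. Expanding the left-hand side and using $\mathrm{Re}(\overline{x}\cdot F)=\langle x,F\rangle$, $\mathrm{Re}(\overline{y}\cdot G)=\langle y,G\rangle$ (cf. \eqref{inpbar}) together with $\langle\overline{x}\cdot F,\overline{y}\cdot G\rangle=\langle x\cdot\overline{y},F\cdot\overline{G}\rangle$ (apply \eqref{switch1}, \eqref{switch2} and associativity) reproduces exactly the radicand of \eqref{lambda}. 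Writing $w:=1+\overline{x}\cdot F+\overline{y}\cdot G$, this says $\|w\|=\lambda(g)$; in particular $w\neq 0$ precisely when $g\in\cG_\A$, so $\varphi$ is well-defined, and it is smooth since $w$ is polynomial in the coordinates while $\|w\|$ is nowhere vanishing on $\cG_\A$.

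Next I would check compatibility with the groupoid operations. Source and units are immediate: $\s\circ\varphi(g)=(x,y)=\s(g)$, so $\varphi$ covers the identity, and $\varphi(1_{(x,y)})=\varphi(0,0,x,y)=((x,y),1)$. For the target, associativity and \eqref{a^2} give $x\cdot w=x+\|x\|^2F+(x\cdot\overline{y})\cdot G$ and $y\cdot w=y+\|y\|^2G+(y\cdot\overline{x})\cdot F$, so the group component $u:=w/\|w\|$ of $\varphi(g)$ satisfies $(x\cdot u,\,y\cdot u)=\tfrac{1}{\lambda(g)}\bigl(x\cdot w,\,y\cdot w\bigr)=\ttt(g)$, which is exactly the target of the arrow $((x,y),u)$ under the diagonal right action.

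For multiplication, take composable $g=(F,G,x,y)$ and $g'=(F',G',x',y')$ with $(x',y')=\ttt(g)$, so $\overline{x'}=\overline{w}\cdot\overline{x}/\lambda(g)$ and $\overline{y'}=\overline{w}\cdot\overline{y}/\lambda(g)$. Using associativity and $w\cdot\overline{w}=\|w\|^2=\lambda(g)^2$ one obtains $w\cdot(\overline{x'}\cdot F')=\lambda(g)\,\overline{x}\cdot F'$ and $w\cdot(\overline{y'}\cdot G')=\lambda(g)\,\overline{y}\cdot G'$, hence, with $w':=1+\overline{x'}\cdot F'+\overline{y'}\cdot G'$,
\[
w\cdot w' \;=\; 1+\overline{x}\cdot\bigl(F+\lambda(g)F'\bigr)+\overline{y}\cdot\bigl(G+\lambda(g)G'\bigr),
\]
which is precisely the numerator of the group component of $\varphi(g'\cdot g)$, since $g'\cdot g=(F+\lambda(g)F',\,G+\lambda(g)G',\,x,y)$. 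Because $\|w\cdot w'\|=\|w\|\,\|w'\|$, dividing by the norm gives $\tfrac{w}{\|w\|}\cdot\tfrac{w'}{\|w'\|}$, i.e. $\varphi(g'\cdot g)=\varphi(g')\cdot\varphi(g)$ with the action-groupoid product $((x',y'),u')\cdot((x,y),u)=((x,y),u\cdot u')$; consistency with Lemmas \ref{lemlresdef} and \ref{lammult} gives a convenient check on the norms.

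There is no conceptual obstacle; the proof is bookkeeping. The point worth flagging is that every crucial simplification above — for instance passing from $x\cdot(\overline{y}\cdot G)$ to $(x\cdot\overline{y})\cdot G$, and the rebracketings inside $w\cdot w'$ — uses associativity of $\A$ essentially, which is precisely why the same formula does not define a morphism out of $\cG=\cG_\OO$ into any action groupoid, in line with the non-homogeneity of $\cL_{OH}$.
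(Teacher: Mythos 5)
Your proposal is correct and follows essentially the same route as the paper: both rest on the simplification $\lambda(g)=\|1+\overline{x}\cdot F+\overline{y}\cdot G\|$ in the associative case and then check source, target and multiplicativity, with your computation of $w\cdot w'$ via $w\cdot\overline{w}=\lambda(g)^2$ being just a reorganization of the paper's chain of equivalences. The only minor difference is that you obtain $\lambda=\|w\|$ by direct expansion of the norm (which also cleanly covers $x=0$) instead of factoring $\|x\cdot w\|/\|x\|$ as in the paper, but this does not change the substance of the argument.
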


\begin{proof} First we observe that the rescaling function $\lambda \colon \A^2 \times \A^2 \to \R$, defined by Equation \eqref{lambda}, simplifies drastically in the case when the division algebra is associative: For every $g=(F,G,x,y)\in \cG_\A$ we have
    \begin{align*}
\lambda(g)=\dfrac{\|x+\|x\|^2F+x\!\cdot\!\overline{y}\!\cdot\!G\|}{\|x\|}=\dfrac{\|x\!\cdot\!(1+\overline{x}\!\cdot\!F+\overline{y}\!\cdot\!G)\|}{\|x\|}
        =\|1+\overline{x}\!\cdot\!F+\overline{y}\!\cdot\!G\|\,.
    \end{align*}
 Similarly, for the target map one now finds: 
    \begin{align*}
        \ttt(g)&=\frac{1}{\lambda(g)}\left(x+\|x\|^2F+x\!\cdot\!\overline{y}\!\cdot\!G, y+\|y^2\|G+y\!\cdot\!\overline{y}\!\cdot\!F\right)\\
        &=\frac{1}{\lambda(g)}\left(x\!\cdot\!(1+\overline{x}\!\cdot\!F+\overline{y}\!\cdot\!G),y\!\cdot\!(1+\overline{x}\!\cdot\!F+\overline{y}\!\cdot\!G)\right)
        =(x,y)\!\cdot\!\frac{1+\overline{x}\!\cdot\!F+\overline{y}\!\cdot\!G}{\|1+\overline{x}\!\cdot\!F+\overline{y}\!\cdot\!G\|}\,,
    \end{align*}
which is evidently equal to the target of $\varphi(g)$ in the action groupoid. 

\vskip 2mm \noindent To prove that $\varphi$ is a morphism of Lie groupoids, it remains to show that it also preserves the  multiplication: Let $g'=(F',G',x',y')$ and $g=(F,G,x,y)$ be arrows such that $(x',y')=t(g)$. On the one hand, we have
\begin{align*}
    \varphi(g')\!\cdot\!\varphi(g)&=\left((x',y'),\frac{1+\overline{x'}\!\cdot\!F'+\overline{y'}\!\cdot\!G'}{\lambda(g')}\right)\!\cdot\!\left((x,y),\frac{1+\overline{x}\!\cdot\!F+\overline{y}\!\cdot\!G}{\lambda(g)}\right)\\
    &=\left((x,y),\frac{(1+\overline{x}\!\cdot\!F+\overline{y}\!\cdot\!G)\!\cdot\!(1+\overline{x'}\!\cdot\!F'+\overline{y'}\!\cdot\!G')}{\lambda(g)\!\cdot\!\lambda(g')}\right)\,
\end{align*}
and, on the other hand,
\begin{align*}  \varphi(g'\!\cdot\!g)=\left((x,y),\frac{1+\overline{x}\!\cdot\!(F+\lambda(g)\!\cdot\!F')+\overline{y}\!\cdot\!(G+\lambda(g)\!\cdot\!G')}{\lambda(g'\!\cdot\!g)}\right)\,.
\end{align*}
Using Lemma \ref{lammult}, to establish $\varphi(g'\!\cdot\!g)=\varphi(g')\!\cdot\!\varphi(g)$ it suffices to show that
\begin{equation*}
    (1+\overline{x}\!\cdot\!F+\overline{y}\!\cdot\!G)\!\cdot\!(1+\overline{x'}\!\cdot\!F'+\overline{y'}\!\cdot\!G')=1+\overline{x}\!\cdot\!(F+\lambda(g)\!\cdot\!F')+\overline{y}\!\cdot\!(G+\lambda(g)\!\cdot\!G')\,
\end{equation*}
or, equivalently, that
\begin{equation*}
    (1+\overline{x}\!\cdot\!F+\overline{y}\!\cdot\!G)\!\cdot\!(\overline{x'}\!\cdot\!F'+\overline{y'}\!\cdot\!G')=\lambda(g)\!\cdot\!(\overline{x}\!\cdot\!F'+\overline{y}\!\cdot\!G')\,.
\end{equation*}
But as $(1+\overline{x}\cdot F+\overline{y} \cdot G)^{-1}=\overline{(1+\overline{x}\cdot F+\overline{y}\cdot G)}/\lambda(g)^2$, the latter equation becomes equivalent to 

\begin{equation*}
(\overline{x'}\!\cdot\!F'+\overline{y'}\!\cdot\!G')=\overline{\left(\frac{1+\overline{x}\!\cdot\!F+\overline{y}\!\cdot\!G}{\lambda(g)}\right)}\!\cdot\!(\overline{x}\!\cdot\!F'+\overline{y}\!\cdot\!G')\,,
\end{equation*}
which holds true since
\begin{equation*}
    (x',y')=\left(x\!\cdot\!\left(\frac{1+\overline{x}\!\cdot\!F+\overline{y}\!\cdot\!G}{\lambda(g)}\right),y\!\cdot\!\left(\frac{1+\overline{x}\!\cdot\!F+\overline{y}\!\cdot\!G}{\lambda(g)}\right)\right)\,.
\end{equation*}

    \hfill $\blacksquare$
\end{proof}

\subsection{The induced Lie algebroid}\label{liealgoh}
\vskip 2mm \noindent In this subsection, we differentiate the Lie groupoid $\cG\Rightarrow\OO^2$ introduced in the previous subsection so as to obtain a corresponding Lie algebroid. We follow the conventions of \cite{CF11}.

\begin{prop}\label{coralg}
    The Lie algebroid associated to the Lie groupoid $\cG\Rightarrow\OO^2$ is given by
    \begin{itemize}
        \item The trivial vector bundle $E_0=\underline{\OO^2}$ over $\OO^2$.
        \vspace{2mm}
        \item The anchor map $\rho\colon E_0\to  T\OO^2\cong\underline{\OO^2}$:
        \begin{align}
    \label{rhoanchor}
\rho\renewcommand{\arraystretch}{0.7}\begin{pmatrix}u\\v\end{pmatrix}=\renewcommand{\arraystretch}{0.7}\begin{pmatrix}\|x\|^2u+(x\!\cdot\!\overline{y})\!\cdot\!v-(\la x,u\ra+\la y,v\ra)x\\\|y\|^2v+(y\!\cdot\!\overline{x})\!\cdot\!u-(\la x,u\ra+\la y,v\ra)y\end{pmatrix}\,,
        \end{align}
        for every $\renewcommand{\arraystretch}{0.7}\begin{pmatrix}u\\v\end{pmatrix}\in E_0$ based at $(x,y)\in \OO^2$. %and $K:=\la x,u\ra+\la y,v\ra$.
        \item The Lie bracket evaluated on constant sections $\renewcommand{\arraystretch}{0.7}\begin{pmatrix}u\\v\end{pmatrix},\renewcommand{\arraystretch}{0.7}\begin{pmatrix}u'\\v'\end{pmatrix}\in \Gamma(E_0)$: 
        \begin{align}
            \label{brabra}[\begin{pmatrix}u\\v\end{pmatrix},\begin{pmatrix}u'\\v'\end{pmatrix}]&=(\la x,u\ra+\la y,v\ra)\begin{pmatrix}u'\\v'\end{pmatrix}-(\la x,u'\ra+\la y,v'\ra)\begin{pmatrix}u\\v\end{pmatrix}\,.        
        \end{align}
    \end{itemize}
\end{prop}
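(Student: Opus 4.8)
The plan is to differentiate the structure maps of $\cG\Rightarrow\OO^2$ along the unit section, following the conventions of \cite{CF11}: the Lie algebroid vector bundle is $E_0 = \ker(\exd\s)|_{\uuu(\OO^2)}$, the anchor is $\rho = \exd\ttt|_{E_0}$, and the bracket is the one induced on right-invariant vector fields. Concretely, since $\cG$ is an open subset of $\OO^2\times\OO^2$ with object coordinates $(x,y)$ and arrow coordinates $(F,G)$, and the unit $\uuu(x,y) = (0,0,x,y)$ corresponds to $F=G=0$, the source map $\s(F,G,x,y) = (x,y)$ is just the projection, so $\ker(\exd\s)$ at a unit is spanned by $\partial/\partial F^i, \partial/\partial G^i$. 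Hence $E_0 \cong \underline{\OO^2}$ canonically, with a section $\begin{pmatrix}u\\v\end{pmatrix}$ at $(x,y)$ corresponding to the tangent vector $\sum_i u^i\,\partial_{F^i} + v^i\,\partial_{G^i}$ at $(0,0,x,y)$.

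First I would compute the anchor by differentiating $\ttt$. Write $\ttt(F,G,x,y) = \tfrac{1}{\lambda}(x+\|x\|^2 F + (x\cdot\overline y)\cdot G,\ y + \|y\|^2 G + (y\cdot\overline x)\cdot F)$ and take $\tfrac{\exd}{\exd\epsilon}\big|_{\epsilon=0}$ of $\ttt(\epsilon u,\epsilon v,x,y)$. Since $\lambda(0,0,x,y)=1$ and, from \eqref{lambda}, $\tfrac{\exd}{\exd\epsilon}\big|_0 \lambda(\epsilon u,\epsilon v,x,y) = \la x,u\ra + \la y,v\ra$ (the cross term $\la x\cdot\overline y, F\cdot\overline G\ra$ is quadratic in $(F,G)$ and drops out), the product rule gives exactly \eqref{rhoanchor}: the $\|x\|^2 u + (x\cdot\overline y)\cdot v$ part comes from differentiating the numerator, and the $-(\la x,u\ra+\la y,v\ra)x$ part from differentiating $1/\lambda$. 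This is a short, direct calculation.

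Next I would compute the bracket. There are two standard routes: (i) identify right-invariant vector fields on $\cG$ extending the constant sections, compute their commutator, and restrict to the units; or (ii) use the formula $[e_a,e_b]$ expressed via the anchor and structure functions, matching against the known Lie algebroid axioms. I expect route (i) to be cleanest here because the multiplication $g'\cdot g = (F + \lambda(g) F',\, G + \lambda(g) G',\, x,y)$ has such an explicit form: a right-invariant vector field is obtained by pushing forward a fixed tangent vector $\begin{pmatrix}u'\\v'\end{pmatrix}$ at a unit through right translations, and the rescaling factor $\lambda(g)$ — which evaluated at the unit equals $1$ but whose derivative along $\begin{pmatrix}u\\v\end{pmatrix}$ equals $\la x,u\ra + \la y,v\ra$ — is precisely what produces the two terms $(\la x,u\ra+\la y,v\ra)\begin{pmatrix}u'\\v'\end{pmatrix} - (\la x,u'\ra+\la y,v'\ra)\begin{pmatrix}u\\v\end{pmatrix}$ in \eqref{brabra} after antisymmetrizing the commutator. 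One then checks that this bracket, together with $\rho$, satisfies the Leibniz rule $[e, fe'] = f[e,e'] + (\rho(e)f)e'$ on general (non-constant) sections, which pins down the bracket uniquely from its values on constant sections; the Jacobi identity is automatic since $\cG$ is a genuine Lie groupoid.

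The main obstacle, and the only place non-associativity of $\OO$ genuinely intervenes, is bookkeeping in the commutator computation of route (i): the right-translation pushforward involves terms like $\lambda(g)\cdot F'$ where $\lambda(g)$ is a real scalar (so associativity is not an issue there), but verifying that the higher-order terms in $\lambda$ and in $\ttt$ truly do not contribute to the first-order bracket requires care, and one must consistently use that $\lambda$ takes values in $\R$ to commute it past octonionic factors. I would isolate the claim ``$\exd\lambda|_{\uuu}(u,v;x,y) = \la x,u\ra+\la y,v\ra$'' as a preliminary sub-computation, then feed it mechanically into both the anchor and the bracket computations; once that lemma is in hand, everything else reduces to the product rule and antisymmetrization, with no associativity subtleties remaining.
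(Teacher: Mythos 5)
Your proposal is correct and follows essentially the same route as the paper: identify $E_0=\ker(\exd\s)|_{\uuu(\OO^2)}\cong\underline{\OO^2}$, obtain the anchor by differentiating $\ttt$ along curves through the units using $\tfrac{\exd}{\exd\tau}\lambda\big|_{\tau=0}=\la x,u\ra+\la y,v\ra$ (the paper's Equation \eqref{derlambda} in basis form), and obtain the bracket from the right-invariant vector fields, which the explicit multiplication law shows are $\lambda(g)$ times the constant coordinate fields, so that the commutator at the units produces exactly \eqref{brabra}. The paper just carries this out on the basis sections $\tfrac{\partial}{\partial F^i},\tfrac{\partial}{\partial G^i}$ and extends by linearity, which is only a cosmetic difference from your formulation.
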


\begin{proof}
The vector bundle $E_0$ can be identified with $\ker \exd s \vert_{u(\OO^2)} \subset \cG\vert_{u(\OO^2)}$.
As $s\colon\cG\to\OO^2$ is projection to the second component of $\cG\subset \OO^2\times\OO^2$, $E_0$ is the trivial, rank $16$ vector bundle $\underline{\OO^2} \equiv \OO^2 \times \OO^2$. Assuming that the trivialization is given by the constant global frame $\tfrac{\partial}{\partial F^i}$ and $\tfrac{\partial}{\partial G^i}$ for $i=0,1,\ldots,7$, we may identify every vector field 
\begin{equation}
   \la u,\partial_F\ra+\la v,\partial_G\ra:=\sum_{i=0}^7\left(u^i\tfrac{\partial}{\partial F^i}+v^i\tfrac{\partial}{\partial G^i}\right)\in\Gamma(E_0) 
\end{equation} with $\renewcommand{\arraystretch}{0.7}\begin{pmatrix}u\\v\end{pmatrix}\in \Gamma(\underline{\OO^2})$, where $u$ and $v$ can be thought of as $\OO$-valued functions on $\OO^2$.

\vskip 2mm\noindent To evaluate the anchor map $\rho:E_0\to  T\OO^2\cong\underline{\OO^2}$, it suffices to compute $\rho\renewcommand{\arraystretch}{0.7}\begin{pmatrix}e_i\\0\end{pmatrix}$ and $\rho\renewcommand{\arraystretch}{0.7}\begin{pmatrix}0\\e_i\end{pmatrix}$:  Let us represent the vector $\tfrac{\partial}{\partial F^i}\Big\vert_{1_{(x,y)}}$ as the velocity 
vector of the smooth curve $g(\tau):\tau\to(\tau\!\cdot\!e_i,0,x,y)\in \mathrm{S}^{-1}(x,y)$, for which we have $g(0)=1_{(x,y)}$ and $\Dot{g}(0)=\tfrac{\partial}{\partial F^i}\Big\vert_{1_{(x,y)}}$. On this curve, the rescaling function is given by
    \begin{align*}
\lambda(g(\tau))=\sqrt{1+2\tau\la x, e_i\ra+\tau^2\|x\|^2}
\end{align*}
which implies that
\begin{align} \label{derlambda}
    \frac{\exd}{\exd\tau}\!
\Big(\lambda(g(\tau))\Big)\Big\vert_{\tau=0}=x^i\,.
\end{align}
For every $i=0,\ldots,7$ we can compute $\rho\begin{pmatrix}e_i\\0\end{pmatrix}$ as
\begin{align*}
\rho\begin{pmatrix}e_i\\0\end{pmatrix}&=\exd\ttt(\tfrac{\partial}{\partial F^i})=\frac{\exd}{\exd \tau}\Big\vert_{\tau=0}\ttt(g(\tau))\\&=\frac{\exd}{\exd \tau}\Big\vert_{\tau=0}\left[\frac{1}{\lambda(g(\tau))}\begin{pmatrix}x+\tau\|x\|^2e_i\\y+\tau(y\!\cdot\!\overline{x})\!\cdot\!e_i\end{pmatrix}\right]\\
    &=-x^i\begin{pmatrix}x\\y\end{pmatrix}+\begin{pmatrix}\|x\|^2e_i\\(y\!\cdot\!\overline{x})\!\cdot\!e_i\end{pmatrix}\\
    &=\begin{pmatrix}\|x\|^2e_i-x_i\,x\\(y\!\cdot\!\overline{x})\!\cdot\!e_i-x_i\,y\end{pmatrix}\,.
\end{align*}
$\rho\renewcommand{\arraystretch}{0.7}\begin{pmatrix}0\\e_i\end{pmatrix}$ can be computed in a similar way. Equation \eqref{rhoanchor} then follows from the $C^\infty(\OO^2)$-linearity of $\rho$.

\vspace{3mm} \noindent Given the anchor map, it is sufficient to specify the Lie algebroid bracket $[\cdot,\cdot]\colon \Gamma(E_0)\wedge\Gamma(E_0)\to\Gamma(E_0)$  on constant sections: We first compute the Lie bracket of the right-invariant vector fields induced by $\tfrac{\partial}{\partial F^i}\Big\vert_{u(\OO^2)}$ and $\tfrac{\partial}{\partial G^i}\Big\vert_{u(\OO^2)}$ on $\cG$. Let $g=(F,G,x,y)\in \cG$ be an arrow with $\ttt(g)=(x',y')$. Consider the curve $g(\tau)\colon \tau\mapsto (\tau e_i,0,x',y')$ which satisfies $g(0)=1_{(x',y')}$ and $\Dot{g}(0)=\tfrac{\partial}{\partial F^i}\Big\vert_{1_{(x',y')}}$.

 The right-invariant vector field $\left(\tfrac{\partial}{\partial F^i}\right)^{\!R}\in \mathfrak{X}(\cG)$ induced by $\tfrac{\partial}{\partial F^i}\Big\vert_{u(\OO^2)}$ is given by
\begin{align*}
    \tfrac{\partial}{\partial F^i}^R(g)=\frac{\exd}{\exd\tau}\Big\vert_{\tau=0}\left[g(\tau)\!\cdot\!g\right]&=\frac{\exd}{\exd\tau}\Big\vert_{\tau=0}(F+\tau\lambda(g)\!\cdot\!e_i,G,x,y)\\
    &=\lambda(g)\tfrac{\partial}{\partial F^i}\Big\vert_g
\end{align*}
and similarly
\begin{align*}
    \tfrac{\partial}{\partial G^i}^R(g)=\lambda(g)\tfrac{\partial}{\partial G^i}\Big\vert_g\,.
\end{align*}
To calculate commutators of the above vector fields, we need to compute for example
\[ \Big(\left(\tfrac{\partial}{\partial F^i}\right)^{\!R} \cdot \lambda \Big)(1_{(x,y)}),  \]
a calculation we performed already in Equation \eqref{derlambda}. This implies in particular
\begin{align*}
[\tfrac{\partial}{\partial F^i}^R,\tfrac{\partial}{\partial F^j}^R]\Big\vert_{1_{(x,y)}}&=x^i\tfrac{\partial}{\partial F^j}^R\Big\vert_{1_{(x,y)}}-x^j\tfrac{\partial}{\partial F^i}^R\Big\vert_{1_{(x,y)}}=x^i\tfrac{\partial}{\partial F^j}\Big\vert_{1_{(x,y)}}-x^j\tfrac{\partial}{\partial F^i}\Big\vert_{1_{(x,y)}}.\end{align*}
In a similar fashion, one obtains 
\begin{align*}
[\tfrac{\partial}{\partial F^i}^R,\tfrac{\partial}{\partial G^j}^R]\Big\vert_{1_{(x,y)}}&
=x^i\tfrac{\partial}{\partial G^j}\Big\vert_{1_{(x,y)}}-y^j\tfrac{\partial}{\partial F^i}\Big\vert_{1_{(x,y)}}\\
[\tfrac{\partial}{\partial G^i}^R,\tfrac{\partial}{\partial G^j}^R]\Big\vert_{1_{(x,y)}}&
=y^i\tfrac{\partial}{\partial G^j}\Big\vert_{1_{(x,y)}}-y^j\tfrac{\partial}{\partial G^i}\Big\vert_{1_{(x,y)}}\,.
\end{align*}
Equation \eqref{brabra} now follows by $\R$-linearity of the bracket.
    \hfill $\blacksquare$
\end{proof}

\section{The singular foliation and non-homogeneity}\label{secnonhom}

\vskip 2mm\noindent The Lie algebroid introduced in the previous section realizes $\cL_{OH}$ as the leaf decomposition of a \emph{singular foliation} on $\OO^2 \cong \R^{16}$. In the following subsection we provide some background on singular foliations (for more details see \cite{LGLR22}). We will do so using the original definition of singular foliations as introduced in \cite{AS09}. 
It provides the setting in which we will prove our result on the non-homogeneity of the singular octonionic Hopf foliation, see Theorem \ref{nonhom} below.
 
\vskip 2mm
\noindent
In Section \ref{sec:SRF}
we will need a generalization \cite{LGLS20} of this notion of a singular foliation to the real-analytic and polynomial setting, provided in Definition \ref{sheafSF} blow. 
The two definitions are  equivalent in the smooth setting \cite{GZ19}.

\subsection{Basic definitions and properties of singular foliations} \label{SFbasic}

Below, $\mathfrak{X}_c(M)$ stands for the $C^\infty(M)$-module of compactly supported vector fields on a smooth manifold $M$.

\begin{deff}\label{lfg}
A $C^\infty(M)$-submodule $\mathcal{F}\subset\mathfrak{X}_c(M)$ is said to be \emph{locally finitely generated} if, for every point $q\in M$, there exists an open neighborhood $U\subset M$ containing $q$ such that the submodule 
    $\iota_U^{-1}(\mathcal{F}):=\left\{X\vert_U\,\colon\,X\in \mathcal{F}\,,\,\mathrm{supp}(X)\subset U\right\}\,$ of $\mathfrak{X}_c(U)$ is finitely generated. This means that there exist finitely many (not necessarily compactly supported) 
vector fields $X_1,\ldots,X_N\subset \mathfrak{X}(U)$ 
such that 
\begin{equation}
\iota_U^{-1}(\mathcal{F})=\langle X_1,\ldots,X_N\rangle_{C^\infty_c(U)}\,.
\end{equation}
\end{deff}

\begin{deff} 
A \emph{singular foliation} (SF for short) on $M$ is a locally finitely generated $\mathcal{F}\subset\mathfrak{X}_c(M)$ closed with respect to the Lie bracket.  The pair $(M,\mathcal{F})$ is a  \emph{foliated manifold}. 
\label{SF}
\end{deff}

\vskip 2mm\noindent For every foliated manifold $(M,\mathcal{F})$ the leaf $L_q$ passing through $q\in M$ is given by the set of points which can be joined to $q$ by the flows of finitely many vector fields in $\cF$.

\begin{theorem}[\cite{H62}]\label{hermann} Let $(M,\cF)$ be a foliated manifold. Then for every point $q\in M$, the subset $L_q$ is an injectively immersed submanifold of $M$. 
\end{theorem}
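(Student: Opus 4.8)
The plan is to prove the orbit theorem in the form due to Hermann (and Stefan--Sussmann), adapted to the module framework of Definition \ref{SF}. Fix $q\in M$ and write $L=L_q$ for the set of points joined to $q$ by flows of finitely many elements of $\cF$; since $\cF\subseteq\mathfrak{X}_c(M)$, all such flows are complete, so these are genuine compositions of global diffeomorphisms of $M$. For $p\in M$ put $\widetilde{\cF}_p:=\{X(p):X\in\cF\}\subseteq T_pM$, the pointwise evaluation of the module. The central first step is to show that $\dim\widetilde{\cF}_p$ is constant, equal to some $r$, as $p$ ranges over $L$. This follows from the self-invariance of $\cF$ under its own flows: for $X\in\cF$ one has $(\phi^X_t)_*\cF=\cF$, equivalently $d\phi^X_t(\widetilde{\cF}_p)=\widetilde{\cF}_{\phi^X_t(p)}$. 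To see this, work on an open set $U$ where $\cF|_U=\langle Y_1,\dots,Y_N\rangle_{C^\infty(U)}$; involutivity gives $[X,Y_k]=\sum_l c_{kl}Y_l$ with $c_{kl}\in C^\infty(U)$, so the coefficients expressing $(\phi^X_{-t})_*Y_k$ in the frame $(Y_l)$ satisfy a linear ODE along trajectories of $X$ and the pushforward stays a section of $\cF|_U$; a standard argument using compactness of $\mathrm{supp}\,X$ patches the local statements together (see \cite{AS09}). Since each point of $L$ is reached from $q$ by a chain of such flows, $\dim\widetilde{\cF}_p\equiv\dim\widetilde{\cF}_q=:r$ on $L$.

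Next I would build plaques. Pick $X_1,\dots,X_r\in\cF$ with $X_1(q),\dots,X_r(q)$ a basis of $\widetilde{\cF}_q$ and set
\[
\Phi(t_1,\dots,t_r):=\phi^{X_r}_{t_r}\circ\cdots\circ\phi^{X_1}_{t_1}(q)
\]
for $(t_1,\dots,t_r)$ in a small ball about $0$. The differential of $\Phi$ at $0$ sends $(\xi_1,\dots,\xi_r)$ to $\sum_i\xi_i X_i(q)$ and is injective, so for small $t$ the map $\Phi$ is an embedding onto an $r$-dimensional embedded submanifold $S\ni q$ of $M$ contained in $L$. Each tangent vector of $S$ is a flow-pushforward of some $X_i$-vector, hence belongs to $\widetilde{\cF}_p$ at the relevant point $p$; since $\dim\widetilde{\cF}_p=r=\dim T_pS$ on $S$, we get $T_pS=\widetilde{\cF}_p$ for all $p\in S$. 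Therefore every $X\in\cF$ is tangent to $S$ along $S$, so the flows generating $L$ move points of $S$ inside such plaques. Declaring the family of all sets $\psi(S')$---where $\psi$ is a finite composition of flows of elements of $\cF$ and $S'$ is a plaque of the above type---to be charts of $L$, the overlaps are smooth (two parametrizations of the same $r$-dimensional $\widetilde{\cF}$-integral piece) and endow $L$ with the structure of an $r$-dimensional smooth manifold for which the inclusion $L\hookrightarrow M$ is, by construction, an injective immersion.

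The main obstacle, in my view, is twofold. The substantive point is the flow-invariance $(\phi^X_t)_*\cF=\cF$ and the resulting constancy of $\dim\widetilde{\cF}_p$ along $L$: this is precisely where involutivity and local finite generation are needed, and the globalization from a generating neighborhood $U$ to all of $M$ uses the compact-support hypothesis together with a covering argument. The more technical, classically delicate point is checking that the leaf topology just defined is genuinely a manifold topology---in particular that $L$ is second countable; this is deduced from second countability of $M$ and the fact that $L$ is a countable union of plaques, but the bookkeeping is fiddly. I would carry out the linear-ODE argument for the first and defer the topological verification to \cite{AS09} (or \cite{H62}), these being by now standard.
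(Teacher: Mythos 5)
The paper does not prove this statement at all---Theorem \ref{hermann} is quoted as a classical result with a citation to \cite{H62} (and its module-theoretic adaptation is the one in \cite{AS09})---so there is no in-paper argument to diverge from. Your sketch is precisely the standard Hermann/Stefan--Sussmann orbit-theorem proof that those references contain (flow-invariance of $\cF$ via the linear ODE for the coefficients in a local generating frame, constancy of $\dim\widetilde{\cF}_p$ along the leaf, flow-built plaques with $T_pS=\widetilde{\cF}_p$, then charts from flow-translates of plaques, with chart compatibility and second countability deferred), and it is essentially correct as a proof outline with the genuinely delicate points correctly identified and appropriately referred to \cite{AS09}.
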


\vskip 2mm\noindent 
The \emph{space of leaves} of $(M,\cF)$, i.e.\ the quotient space obtained form the equivalence relation of belonging to the same leaf, is denoted by $M/\cF$.

An SF contains more information than the leaf decomposition:
\begin{example} \label{k}
Consider $M=\mathbb{R}$ and fix some $k\in \mathbb{N}$. Then the vector fields vanishing  at least to order $k$ at the origin form a singular foliation $\mathcal{F}_k$. While the leaf decomposition is the same for all $k$, one has $\mathcal{F}_{k+1} \subsetneq\mathcal{F}_k$.

\end{example}
\vskip 2mm An important class of SFs arises from Lie algebroids: Let $E \to M$ be a Lie algebroid with anchor map $\rho$, then \begin{equation}
        \mathcal{F}_E:= \langle\left\{\rho(a)\,\colon\, a\in \Gamma(E)\right\}\rangle_{C^\infty_c(M)}\,
    \end{equation}
defines an SF on $M$. This applies also to the Lie algebroid constructed in the previous section.

\begin{deff}\label{deffoh}
    The $C^\infty(\OO^2)$-module $\cF_{OH}$ generated by elements of $\rho(\Gamma(E_0))$  for the Lie algebroid given in Proposition \ref{coralg} defines a singular foliation on $\OO^2$, referred to as the \emph{singular octonionic Hopf foliation}.
\end{deff}

\begin{deff}
Let $(M,\mathcal{F})$ be a foliated manifold. For every point $q\in M$, the \emph{fiber of $\mathcal{F}$} at $q$ is defined as:
\begin{equation*}
\mathcal{F}_q:=\mathcal{F}/\mathrm{I}_q\!\cdot\!\mathcal{F}
\end{equation*}
where $\mathrm{I}_q:=\left\{f\in C^{\infty}(M)\,\colon\,f(q)=0\right\}$ is the vanishing ideal of $q$ in $C^{\infty}(M)$.
\end{deff}

\begin{prop}\label{mingen}[\cite{AS09}] 
   The dimension of the vector space $\mathcal{F}_q$ is equal to the minimal number of locally generating vector fields for $\mathcal{F}$ in a small enough neighborhood of $q\in M$.
\end{prop}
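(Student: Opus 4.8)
The plan is to prove the two inequalities separately: that $\dim\mathcal{F}_q$ is a lower bound for the number of generators needed near $q$, and that this bound is attained. Throughout I work with the germ picture at $q$, since the fiber $\mathcal{F}_q$ depends only on the germ of $\mathcal{F}$ at $q$: if $U\ni q$ is open, then multiplying by bump functions supported in $U$ and using local finite generation one checks that the natural map induces an isomorphism $\iota_U^{-1}(\mathcal{F})/\big(I_q^U\cdot\iota_U^{-1}(\mathcal{F})\big)\cong\mathcal{F}_q$, where $I_q^U$ denotes the functions on $U$ vanishing at $q$; passing further to germs gives $\mathcal{F}_{[q]}/\mathfrak{m}_q\mathcal{F}_{[q]}\cong\mathcal{F}_q$, where $\mathcal{F}_{[q]}$ is the module of germs at $q$ of vector fields in $\mathcal{F}$ over the local ring $R=C^\infty_{[q]}$ of germs of smooth functions at $q$, with maximal ideal $\mathfrak{m}_q$.

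For the lower bound, suppose $Z_1,\dots,Z_k$ generate $\iota_U^{-1}(\mathcal{F})$ over $C^\infty_c(U)$. For any $W$ in this module one has $W=\sum_i g_i Z_i$; since $W-\sum_i g_i(q)Z_i=\sum_i\big(g_i-g_i(q)\big)Z_i$ lies in $I_q^U\cdot\iota_U^{-1}(\mathcal{F})$, the classes $[Z_1],\dots,[Z_k]$ span $\mathcal{F}_q$ over $\mathbb{R}$. Hence $k\ge\dim\mathcal{F}_q$.

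For attainability, set $d=\dim\mathcal{F}_q$ and choose $X_1,\dots,X_d\in\mathcal{F}$ whose classes form a basis of $\mathcal{F}_q$. By local finite generation fix an open $U\ni q$ and generators $Y_1,\dots,Y_N$ of $\iota_U^{-1}(\mathcal{F})$; then $\mathcal{F}_{[q]}$ is finitely generated over the local ring $R$ by the germs of the $Y_j$, and the germs of $X_1,\dots,X_d$ project to a basis of $\mathcal{F}_{[q]}/\mathfrak{m}_q\mathcal{F}_{[q]}\cong\mathcal{F}_q$. By Nakayama's lemma the germs of $X_1,\dots,X_d$ generate $\mathcal{F}_{[q]}$ over $R$, so each germ $(Y_j)_q$ equals $\sum_{i=1}^d(f_{ij})_q\,(X_i)_q$ for suitable germs of smooth functions $(f_{ij})_q$. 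Each of these finitely many germ identities holds on some neighborhood of $q$; intersecting, I obtain an open $U'\subseteq U$ with $q\in U'$ on which $Y_j=\sum_{i=1}^d f_{ij}X_i$ as genuine vector fields, whence $\iota_{U'}^{-1}(\mathcal{F})=\langle X_1,\dots,X_d\rangle_{C^\infty_c(U')}$ (the inclusion $\supseteq$ is clear, and $\subseteq$ follows because any compactly supported element is a $C^\infty_c(U')$-combination of cutoff multiples of the $Y_j$, hence of the $X_i$). Combined with the lower bound, $d$ is exactly the minimal number of local generators.

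The computational content is negligible; the only delicate points are the bookkeeping needed to match the compactly supported module $\iota_U^{-1}(\mathcal{F})$ with its localization at $q$ — in particular verifying $\mathcal{F}_{[q]}/\mathfrak{m}_q\mathcal{F}_{[q]}\cong\mathcal{F}_q$ — and the legitimacy of invoking Nakayama's lemma, which is justified since the ring of germs of smooth functions at a point is local and $\mathcal{F}_{[q]}$ is finitely generated over it. Partitions of unity and bump functions supply all the gluing.
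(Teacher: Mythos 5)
The paper contains no proof of this proposition to compare against: it is quoted verbatim from \cite{AS09} (it is Proposition~1.5 there), so your argument can only be measured against the standard one, and it is correct and essentially that standard argument. You identify $\mathcal{F}_q$ with $\mathcal{F}_{[q]}/\mathfrak{m}_q\mathcal{F}_{[q]}$, invoke Nakayama over the local (non-Noetherian, which is harmless for finitely generated modules) ring of germs, spread the finitely many germ identities $Y_j=\sum_i f_{ij}X_i$ to a common neighborhood $U'$, and restore compact supports with cutoffs; Androulidakis--Skandalis run the same idea without germs, writing $Y_j\equiv\sum_i\lambda_{ij}X_i$ modulo $I_q\mathcal{F}$ and inverting a matrix of the form $\mathrm{Id}-A$ with $A(q)=0$ on a smaller neighborhood, which is just Nakayama made explicit. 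Your formulation is cleaner at the statement level, at the price of the bookkeeping you yourself flag. Two places where that bookkeeping should actually be written out: in the lower bound, $(g_i-g_i(q))Z_i$ need not lie in $I_q^U\cdot\iota_U^{-1}(\mathcal{F})$ when $Z_i$ is merely a (possibly non-compactly supported) vector field on $U$, so one should first replace $Z_i$ by $\chi Z_i$ for some $\chi\in C^\infty_c(U)$ equal to $1$ near $q$ and on the supports of the coefficients $g_i$, and take classes of these cutoffs; similarly, the local generators $Y_j$ need not belong to $\mathcal{F}$, so their germs lie in $\mathcal{F}_{[q]}$ only via the observation $(Y_j)_q=(\chi Y_j)_q$ with $\chi Y_j\in\iota_U^{-1}(\mathcal{F})$. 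With these cutoffs inserted, all steps go through, including the isomorphism $\iota_U^{-1}(\mathcal{F})/\bigl(I_q^U\cdot\iota_U^{-1}(\mathcal{F})\bigr)\cong\mathcal{F}_q$ and the final inclusion $\iota_{U'}^{-1}(\mathcal{F})\subseteq\langle X_1,\dots,X_d\rangle_{C^\infty_c(U')}$, so I see no genuine gap.
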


\vskip 2mm\noindent For every point $q\in M$, the evaluation map $\mathcal{F}\to F_q$ at $q$, vanishes upon restriction to $\mathrm{I}_q\!\cdot\!\mathcal{F}$ and consequently induces the linear map $\mathrm{ev}_q\colon \mathcal{F}_q\to F_q$, $[X] \mapsto X\vert_q$. The map $\mathrm{ev}_q$ induces the following short exact sequence:

\begin{equation}\label{short}
    0\to \ker(\mathrm{ev}_q)\xhookrightarrow{} \mathcal{F}_q\xrightarrow{ev_q} F_q\to 0\, .
\end{equation}

\vskip 2mm\noindent The Lie bracket of vector fields restricted to $\mathcal{F}$ descends to a Lie bracket on the finite-dimensional vector space $\ker(ev_q)\subset \mathcal{F}_q$.

\begin{deff}
The vector space $\mathfrak{g}_q^{\mathcal{F}}:=\ker(ev_q)$ together with the  induced Lie bracket, defines the \emph{isotropy Lie algebra of $\mathcal{F}$} at $q$.
\end{deff}

\begin{example}
Let $n,k\in \N$,  $M=\mathbb{R}^n$, and let  $\mathcal{F}_k$ be the singular foliation generated by vector fields that vanish at least of  order $k$ at the origin. Then $\dim (\mathfrak{g}_0^{\mathcal{F}_k})=\binom{k+n-1}{n-1}$ and, for all $q \neq 0$,  $\dim (\mathfrak{g}_q^{\mathcal{F}_k})=0$.
\end{example}

\begin{deff}[\cite{AS09,GZ19}]\label{transversemap}
Let $f\colon N\to M$ be a smooth map and let $\cF$ be an SF on $M$. Then $f$ is said to be \emph{transverse to $\cF$}, if for every $q\in M$ one has
\begin{equation*}  \exd_qf(T_qN)+F_{f(q)}=T_{f(q)}M\,.
\end{equation*}
\end{deff}

\begin{example}
\label{inclusionSF}
 If $S\subset M$ is transverse to the leaves of a foliated manifold $(M,\cF)$, then the inclusion map $\iota_S\colon S\xhookrightarrow{} M$ is transverse to $\cF$.    
 \end{example} 
    \begin{example} A submersion $\pi\colon N\to M$ is transverse to every SF on $M$.
\end{example}

\begin{prop}[\cite{AS09}]\label{defpullback}
    Let $(M,\cF)$ be a foliated manifold and let $f\colon N\to M$ 
    be a smooth map transverse to $\cF$. Then the $C^\infty(N)$-module $f^{-1}\cF$ generated by vector fields on $N$ projectable to $\cF$ is a singular foliation on $N$.
\end{prop}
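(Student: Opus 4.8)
The plan is to verify the two requirements in Definition~\ref{SF} for $f^{-1}\cF$: closure under the Lie bracket, and local finite generation. Throughout, write $Y\sim_f X$ for the statement that $Y\in\mathfrak{X}(N)$ is $f$-related to $X\in\mathfrak{X}(M)$, i.e.\ $\exd f\circ Y=X\circ f$; by definition $f^{-1}\cF$ is the $C^\infty(N)$-module generated by those $Y$ with $Y\sim_f X$ for some $X\in\cF$. Closure under $[\cdot,\cdot]$ is essentially formal: if $Y_1\sim_f X_1$ and $Y_2\sim_f X_2$ with $X_1,X_2\in\cF$, then $[Y_1,Y_2]\sim_f[X_1,X_2]$, and $[X_1,X_2]\in\cF$ since $\cF$ is involutive, so the generators of $f^{-1}\cF$ are bracket-closed; for arbitrary elements the Leibniz identity $[h_1Y_1,h_2Y_2]=h_1h_2[Y_1,Y_2]+h_1(Y_1h_2)Y_2-h_2(Y_2h_1)Y_1$, $h_1,h_2\in C^\infty(N)$, keeps the bracket inside $f^{-1}\cF$.

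The substantive task is local finite generation, and I would first dispatch the two cases that actually occur in this paper. If $f$ is a submersion (so that transversality holds for free), then in coordinates in which $f$ becomes the projection $\R^k\times\R^m\to\R^m$ one checks directly that near any point $f^{-1}\cF$ is generated by the coordinate frame $\partial_{t^1},\dots,\partial_{t^k}$ of $\ker\exd f$ together with the pullbacks of local generators $X_1,\dots,X_r$ of $\cF$ — finitely many. If $f=\iota_S$ is the inclusion of a submanifold transverse to $\cF$ (Example~\ref{inclusionSF}), fix $q_0\in S$ and choose local generators $X_1,\dots,X_r$ of $\cF$ near $p_0:=q_0$; using $T_{q_0}S+F_{p_0}=T_{p_0}M$, reorder and recombine them with constant coefficients — which leaves the family generating — so that $X_1(p_0),\dots,X_s(p_0)$, $s=\mathrm{codim}\,S$, project to a basis of $T_{p_0}M/T_{q_0}S$. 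Writing $S$ locally as a linear slice, one solves a linear system that is invertible near $q_0$ by this arrangement to replace $X_i$, $i>s$, by $\hat X_i:=X_i-\sum_{j\le s}c_i^jX_j\in\cF$ which is tangent to $S$ along $S$; hence $\hat X_i|_S\sim_{\iota_S}\hat X_i$ lies in $\iota_S^{-1}\cF$. A short computation — take a generating vector field, namely the restriction $X|_S$ of some $X\in\cF$ tangent to $S$ along $S$, expand $X$ in the $X_i$, re-expand in the $\hat X_i$, and use that the components of the remainder transverse to $S$ vanish on $S$ together with the invertibility of the system — then yields $\iota_S^{-1}\cF=\langle\hat X_{s+1}|_S,\dots,\hat X_r|_S\rangle$ near $q_0$; compact supports are arranged by the usual bump functions.

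For a general transverse $f$ one has to combine these ideas locally near $q_0$, and this is where the real work is. The naive recipe — lift a generating family of $\cF$ to $N$ and append a frame of $\ker\exd f$ — breaks down: a minimal generator of $\cF$ need not admit any $f$-related lift, since transversality only gives $F_{p_0}+\mathrm{image}\,\exd_{q_0}f=T_{p_0}M$ and not $F_{p_0}\subset\mathrm{image}\,\exd_{q_0}f$, and $\ker\exd f$ need not be a subbundle where $f$ drops rank. Transversality must instead be used to single out the part of $\cF$ that does lift and to bound the number of generators; this careful argument is the content of \cite{AS09} (see also \cite{GZ19}). I expect this finite-generation step — not the bracket condition — to be the main obstacle.
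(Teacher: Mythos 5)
The paper itself gives no argument for this proposition---it is quoted directly from \cite{AS09}---so there is no in-paper proof to measure you against; what can be assessed is whether your sketch would stand on its own, and there it has one genuine gap, which you yourself flag: the general transverse map. Everything you actually carry out is fine. Bracket closure via $f$-relatedness and the Leibniz rule is correct; the submersion case is correct (a projectable $Y$ with $\exd f\circ Y=X\circ f$, $X=\sum f_aX_a$, differs from $\sum (f_a\circ f)\widetilde X_a$ by a vertical field, so the vertical frame plus chosen lifts of local generators generate); and the transverse-submanifold case is the standard and correct argument: after arranging $X_1(p_0),\dots,X_s(p_0)$ to induce a basis of $T_{p_0}M/T_{q_0}S$, tangency of $\hat X:=\sum_{j\le s}g_jX_j$ to $S$ along $S$ forces $g_j|_S=0$ near $q_0$, so the $\hat X_i|_S$, $i>s$, generate $\iota_S^{-1}\cF$.

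The gap is that you stop short of the general case and defer it to \cite{AS09}, on the grounds that generators of $\cF$ need not lift along $f$ and that $\ker \exd f$ need not be a bundle. But the missing idea is exactly the standard graph factorization, which reduces the general case to the two cases you have already proved, so the ``real work'' you postpone is essentially done: write $f=\mathrm{pr}_M\circ\gamma_f$ with $\gamma_f\colon N\to N\times M$, $q\mapsto(q,f(q))$, an embedding and $\mathrm{pr}_M$ a submersion. By your submersion case, $\mathrm{pr}_M^{-1}\cF$ is a singular foliation on $N\times M$, with tangent fibers $T_qN\oplus F_p$ at $(q,p)$; transversality of $f$ to $\cF$ says precisely that the graph $\gamma_f(N)$ is transverse to $\mathrm{pr}_M^{-1}\cF$, since $\{(w,\exd f(w))\}+T_qN\oplus F_{f(q)}=T_qN\oplus(\exd f(T_qN)+F_{f(q)})=T_{(q,f(q))}(N\times M)$. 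Your transverse-submanifold case then makes $\gamma_f^{-1}(\mathrm{pr}_M^{-1}\cF)$ a singular foliation on $N\cong\gamma_f(N)$, and a short check of $f$-relatedness (a vector field $Y$ on $N$ is projectable to $\cF$ along $f$ iff $(Y,\exd f(Y))$ on the graph is projectable to $\mathrm{pr}_M^{-1}\cF$ along the inclusion, cf.\ the functoriality of pullbacks in \cite{AS09,GZ19}) identifies this with $f^{-1}\cF$. In particular your worry about lifting generators along $f$ never arises: one only ever pulls back along a submersion and restricts to a transverse embedded copy of $N$. Without this (or an equivalent) reduction, your proof of local finite generation is incomplete for general $f$; with it, your sketch closes.
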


\vskip 2mm\noindent Here, a vector field $V\in \mathfrak{X}(N)$ is called \emph{projectable to $\cF$}, if there exists a vector field $X\in \cF$ such that for every point $q\in N$ we have:
\begin{equation*}
    \exd _qf(V\vert_q)=X\vert_{f(q)}\,.
\end{equation*}

\begin{deff}[\cite{GZ19}]\label{hmedef}
Two foliated manifolds $(M_1,\mathcal{F}_1)$ and $(M_2,\mathcal{F}_2)$ are \emph{Hausdorff Morita equivalent} if there exists a smooth manifold $N$ and surjective submersions with connected fibers $\pi_i \colon N\to M_i$, $i=1,2$, such that:
\begin{equation*}
    \pi_1^{-1}\mathcal{F}_1=\pi_2^{-1}\mathcal{F}_2\,.
\end{equation*}
\noindent In this case we write $(M_1,\mathcal{F}_1)\sim_{ME}(M_2,\mathcal{F}_2)$.
\end{deff}

\begin{example}
    Let $(M_1,\cF_1)$ and $(M_2,\cF_2)$ be isomorphic foliated manifolds, i.e.\ there exists a diffeomorphism $\Phi\colon M_1 \to M_2$ satisfying $\Phi_*(\cF_1)=\cF_2$. Then we have $(M_1,\mathcal{F}_1)\sim_{ME}(M_2,\mathcal{F}_2)$ by choosing  $N=M_1$, $\pi_1=\mathrm{Id}_{M_1}$ and $\pi_2=\Phi$  in Definition \ref{hmedef}.
\end{example}

\begin{theorem}[\cite{GZ19}]\label{hme} Let $(M_1,\mathcal{F}_1)$ and $(M_2,\mathcal{F}_2)$ be Hausdorff Morita equivalent foliated manifolds. Then:
\vskip 2mm\noindent (i) There is a canonical homeomorphism between the  leaf spaces $M_1/\cF_1$ and $M_2/\cF_2$. 
\vskip 2mm\noindent(ii) Let $q_1\in M_1$ and $q_2\in M_2$ be points in corresponding leaves. Choose transversal slices $S_{q_1}$ at $q_1$ and $S_{q_2}$ at $q_2$. Then the foliated manifolds $(S_{q_1},\iota_{S_{q_1}}^{-1}\mathcal{F}_1)$ and $(S_{q_2},\iota_{S_{q_2}}^{-1}\mathcal{F}_2)$ as well as the isotropy Lie algebras $\mathfrak{g}_{q_1}^{\mathcal{F}_1}$ and $\mathfrak{g}_{q_2}^{\mathcal{F}_2}$ are both isomorphic.
\end{theorem}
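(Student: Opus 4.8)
The plan is to derive both statements directly from the defining data of a Hausdorff Morita equivalence (as in \cite{GZ19}): a manifold $N$ together with surjective submersions with connected fibres $\pi_i\colon N\to M_i$, $i=1,2$, and $\cF_N:=\pi_1^{-1}\cF_1=\pi_2^{-1}\cF_2$. Everything hinges on one structural fact about a pullback along a surjective submersion with connected fibres $\pi\colon N\to M$: the leaves of $\pi^{-1}\cF$ are exactly the sets $\pi^{-1}(L)$ for $L$ a leaf of $\cF$, and each $\pi^{-1}(L)$ is connected. For one inclusion, every element of $\pi^{-1}\cF$ is $\pi$-related to an element of $\cF$, so $\pi$ sends flow lines of $\pi^{-1}\cF$ into flow lines of $\cF$; for the reverse inclusion, $\ker(\exd\pi)\subseteq\pi^{-1}\cF$ (vertical fields are $\pi$-related to $0\in\cF$), so one may move freely along a connected fibre inside a single $\pi^{-1}\cF$-leaf, while projectable lifts of the flows realising a path inside $L$ join the fibres along that path; connectedness of $\pi^{-1}(L)$ follows because $\pi$ restricts to a surjective submersion onto the connected immersed manifold $L$ with connected fibres. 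Part (i) is then immediate: each $\pi_i$ descends to a bijection $N/\cF_N\to M_i/\cF_i$, their composite is the claimed bijection $\Phi\colon M_1/\cF_1\to M_2/\cF_2$, and $\Phi$ is a homeomorphism because each $\pi_i$ is an open map carrying open $\cF_N$-saturated subsets of $N$ onto open $\cF_i$-saturated subsets of $M_i$, so the descended maps and their inverses are continuous.

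For part (ii) I would first reduce to a single submersion. One has $(M_i,\cF_i)\sim_{ME}(N,\cF_N)$ via the pair $(\pi_i,\mathrm{Id}_N)$, and $\sim_{ME}$ is transitive (form the fibre product of the two covering submersions), so it suffices to treat an elementary equivalence $\pi\colon N\to M$ with $\cF_N=\pi^{-1}\cF$ and to compare a slice transversal $S$ at $q\in M$ with a slice transversal $\tilde S$ at a point $n\in\pi^{-1}(q)$; for the given $q_1,q_2$ one additionally invokes the standard fact that slice transversals at points of one and the same leaf yield isomorphic foliated manifolds and isomorphic isotropy Lie algebras, so it is harmless to pick $n$ over $q_1$ and replace $q_2$ by $\pi_2(n)$, which lies in the leaf corresponding to $L_{q_1}$. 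The key tool is the ``pullback commutes with restriction to a transversal'' property underlying Proposition~\ref{defpullback}: $\pi^{-1}(S)\subseteq N$ is a submanifold transverse to $\cF_N$, its restricted foliation is $(\pi|_{\pi^{-1}(S)})^{-1}(\iota_S^{-1}\cF)$, and the $\cF_N$-leaf through $n$ inside $\pi^{-1}(S)$ is the connected fibre $\pi^{-1}(q)$.

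Finally, choose a local section $\sigma\colon S'\to\pi^{-1}(S)$ of $\pi|_{\pi^{-1}(S)}$ with $\sigma(q)=n$, on a neighbourhood $S'\subseteq S$ of $q$. A dimension count gives $\dim\sigma(S')=\dim S$, which equals the codimension in $N$ of the leaf through $n$, and $\sigma(S')\cap L_n^{\cF_N}=\{n\}$ after shrinking $S'$; since the tangent space of $\sigma(S')$ is complementary pointwise to $\ker(\exd\pi)$, which lies inside the tangent distribution of $\cF_N$, the submanifold $\sigma(S')$ is transverse to $\cF_N$ and hence is a bona fide slice transversal at $n$, and (using again that $\pi\circ\iota_{\sigma(S')}$ inverts $\sigma$, together with the commuting-pullback property for the transversal $\sigma(S')$) the section $\sigma$ is an isomorphism of foliated manifolds $(S',\iota_{S'}^{-1}\cF)\cong(\sigma(S'),\iota_{\sigma(S')}^{-1}\cF_N)$. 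Uniqueness of slice transversals up to foliated isomorphism then gives $(\tilde S,\iota_{\tilde S}^{-1}\cF_N)\cong(S,\iota_S^{-1}\cF)$ after shrinking, which is the first assertion of (ii); and since $\mathfrak{g}_q^{\cF}=\ker(\mathrm{ev}_q)$ depends only on the germ of $\cF$ transverse to $L_q$, i.e.\ only on the foliated manifold $(S,\iota_S^{-1}\cF)$ — by the local splitting theorem of \cite{AS09} the germ of $\cF$ at $q$ is that of a product in which the leaf-direction factor carries no isotropy and the transverse factor is the slice-transversal foliation — the foliated isomorphism of slices upgrades to $\mathfrak{g}_{q_1}^{\cF_1}\cong\mathfrak{g}_{q_2}^{\cF_2}$. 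The delicate point throughout is to make the slice-transversal constructions and the commuting-pullback lemma rigorous in the genuinely singular setting, where leaves are only immersed and there is no global tubular-neighbourhood normal form; once those are in hand, the rest is bookkeeping with quotient topologies and dimension counts.
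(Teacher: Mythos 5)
The paper itself offers no proof of this statement: Theorem \ref{hme} is quoted directly from \cite{GZ19}, so there is no in-paper argument to compare against. Your outline is, in substance, a reconstruction of the Garmendia--Zambon proof, and its architecture is sound: the leaf-correspondence lemma (the leaves of $\pi^{-1}\cF$ are exactly the connected sets $\pi^{-1}(L)$, using $\ker(\exd\pi)\subset\pi^{-1}\cF$ and connectedness of fibres), openness of the $\pi_i$ to handle the quotient topologies in (i), reduction of (ii) to a single submersion via transitivity of $\sim_{ME}$, the local section $\sigma$ over a transversal $S'$ producing a transversal $\sigma(S')$ in $N$, functoriality of pullbacks to identify $\iota_{\sigma(S')}^{-1}\cF_N=(\pi\circ\iota_{\sigma(S')})^{-1}\cF$ with $\iota_{S'}^{-1}\cF$ transported by a diffeomorphism, and the reduction of the isotropy Lie algebra to the transversal via the splitting theorem of \cite{AS09} are precisely the ingredients of the published argument.

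Two points should be made precise before this counts as a complete proof. First, not every element of $\pi^{-1}\cF$ is $\pi$-projectable --- only a generating set is --- so the assertion that $\pi$ sends flow lines of $\pi^{-1}\cF$ into leaves of $\cF$ should be argued via tangency of arbitrary $C^\infty(N)$-combinations of projectable generators to the (weakly embedded) submanifolds $\pi^{-1}(L)$, not as literally stated. Second, the ``standard fact'' that transversal slices at two points of the same leaf carry isomorphic restricted foliations and isotropy Lie algebras is itself a nontrivial lemma (proved by transporting one slice to the other with time-one flows of elements of $\cF$, which are automorphisms of the foliated manifold); it is doing real work in your argument, since it is exactly what makes the conclusion independent of the choices of $S_{q_1}$ and $S_{q_2}$ and lets you replace $q_2$ by $\pi_2(n)$. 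With these two steps supplied, your proposal matches the proof in \cite{GZ19}.
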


\subsection{Non-homogeneity}

\vskip 2mm\noindent 
We start by characterizing vector fields tangent to the leaves in $\cF_{OH}$ as solutions of a system of functional equations. Throughout this section, every vector field \[\la u,\tfrac{\partial}{\partial x}\ra+\la v,\tfrac{\partial}{\partial y}\ra=\sum_{i=1}^7(u^i\partial x^i+v^i\partial y^i)\] is identified with $\renewcommand{\arraystretch}{0.7}\begin{pmatrix}u\\v\end{pmatrix}\in \Gamma(\underline{\OO^2})$, where $u$ and $v$ can be thought of as $\OO$-valued functions on $\OO^2$.

\vskip 2mm \noindent \begin{lemma}\label{lemchar} A vector field  $\renewcommand{\arraystretch}{0.7}\begin{pmatrix}u\\v\end{pmatrix}\in \mathfrak{X}(\OO^2)$ is tangent to the leaves of $\cL_{OH}$, if and only if
\begin{align}
         u\!\cdot \!\overline{y}+x\!\cdot\! \overline{v}&= \label{condition}0 \, , \\
         \langle x,u \rangle = \la y,v\ra&=0 \,  .\label{orthogonal}
\end{align}
for all  $(x,y)\in \OO^2\cong\mathbb{R}^{16}$. 
\end{lemma}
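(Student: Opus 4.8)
The statement characterises tangency to the leaves of $\cL_{OH}$, so the first thing I would do is recall exactly what the leaves look like: away from the origin, through a point $(x,y)$ with $x\neq 0$, the leaf is $L_{m,r} = l_m\cap \mathrm{S}(r)$ with $m = y\!\cdot\!x^{-1}$ and $r^2 = \|x\|^2+\|y\|^2$; through a point $(0,y)$ with $y\neq 0$ it is $L_{\infty,r}$; and the origin is its own (zero-dimensional) leaf. A vector field is tangent to a (possibly singular) leaf decomposition precisely when its value at every point lies in the tangent space of the leaf through that point (with the zero-dimensional leaf imposing no nontrivial constraint, since the origin can be reached as a limit). So the proof reduces to computing, for a generic point $(x,y)$ with $x\neq 0$, the tangent space $T_{(x,y)}L_{m,r}$ and checking that $\renewcommand{\arraystretch}{0.7}\begin{pmatrix}u\\v\end{pmatrix}\big\vert_{(x,y)}$ lies in it if and only if \eqref{condition} and \eqref{orthogonal} hold there; then handling $x=0$, $y\neq 0$ by a symmetric computation; then observing the origin adds nothing.

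\textbf{Key steps.} First I would describe $T_{(x,y)}l_m$. Since $l_m = \{(z, m\!\cdot\!z) : z\in\OO\}$ is a real-linear subspace, its tangent space at any of its points is $l_m$ itself, i.e.\ the vectors $\renewcommand{\arraystretch}{0.7}\begin{pmatrix}u\\v\end{pmatrix}$ with $v = m\!\cdot\!u = (y\!\cdot\!x^{-1})\!\cdot\!u$. I would then rewrite this membership condition without the inverse: multiplying appropriately and using \eqref{switch1}, \eqref{switch2}, \eqref{invr} and $a\!\cdot\!\overline a = \|a\|^2$, the condition $v = (y\!\cdot\!x^{-1})\!\cdot\!u$ should be equivalent to $x\!\cdot\!\overline v = \overline{\,(y\!\cdot\!x^{-1})\!\cdot\!u\cdot\overline{x}\,}\cdots$ — the honest way is to note $v = (y\!\cdot\!x^{-1})\!\cdot\!u$ iff $v\!\cdot\!\overline u \cdot \|u\|^2 {}^{-1}\cdots$; cleaner: $v\!\cdot\!\overline{x} = ((y\!\cdot\!x^{-1})\!\cdot\!u)\!\cdot\!\overline x$, and by a Moufang/alternativity identity this equals $y\!\cdot\!(x^{-1}\!\cdot\!u\!\cdot\!\overline x)\cdots$; I will instead take the equivalent form $u\!\cdot\!\overline y + x\!\cdot\!\overline v = 0$ directly and verify it is equivalent to $v = (y\!\cdot x^{-1})\!\cdot u$ modulo the radial direction, using $\overline v = \overline{(y\!\cdot x^{-1})\!\cdot u} = \overline u\!\cdot\!\overline{(y\!\cdot x^{-1})} = \overline u\!\cdot\!(\overline{x^{-1}}\cdot\overline y)$ and simplifying $x\!\cdot\!(\overline u\cdot(\overline{x^{-1}}\cdot\overline y))$ via \eqref{semiasseqn2}. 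Second, the sphere $\mathrm{S}(r)$ cuts out the extra condition $\la x,u\ra + \la y,v\ra = 0$ (the derivative of $\|x\|^2+\|y\|^2$). Third, I must show that on $l_m$ this single equation is equivalent to the two separate equations $\la x,u\ra = \la y,v\ra = 0$ in \eqref{orthogonal}: given $v = m\!\cdot\!u$ one has $\la y,v\ra = \la y, m\!\cdot\!u\ra = \la \overline m\!\cdot\!y, u\ra = \la (x\!\cdot\!\overline y\!\cdot\! \|y\|\cdots), u\ra$; more simply $\la y, m\!\cdot u\ra = \la m^{-1}\!\cdot y\cdots$ — here I use $m = y\!\cdot x^{-1}$ so $\overline m\!\cdot y = (\overline{x^{-1}}\cdot\overline y)\cdot y$, and by \eqref{switch}-type identities and $\overline y\cdot y=\|y\|^2$ this is a positive multiple of $x$, giving $\la y,v\ra = c\,\la x,u\ra$ with $c>0$; hence the sum vanishing forces both to vanish. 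Finally, the case $x=0$, $y\neq0$: then $(u,v)$ tangent to $l_\infty$ means $u=0$, and \eqref{condition} reads $x\!\cdot\!\overline v = 0$ automatically while \eqref{orthogonal} reads $\la y,v\ra = 0$, the sphere condition; and at the origin both \eqref{condition} and \eqref{orthogonal} are vacuous, consistent with tangency imposing nothing there.

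\textbf{Main obstacle.} The delicate point is the octonionic algebra in Step 1 and Step 3: because $\OO$ is non-associative, one cannot freely manipulate expressions like $(y\!\cdot\!x^{-1})\!\cdot\!u$, and turning "$v$ is a left $\OO$-multiple of $u$ with the specific factor $y\!\cdot x^{-1}$" into the symmetric, inverse-free equation $u\!\cdot\!\overline y + x\!\cdot\!\overline v = 0$ requires careful use of the identities collected in Section \ref{nda} — principally \eqref{semiasseqn2}, \eqref{conj}, the Moufang identities \eqref{moufang1}--\eqref{moufang3}, and \eqref{invl}--\eqref{invr} — rather than naive cancellation. I expect the cleanest route is to avoid $x^{-1}$ altogether: show directly that for $x\neq 0$, the pair $(u,v)$ lies in $l_m$ (with $m=y\!\cdot x^{-1}$) if and only if $u\!\cdot\!\overline y + x\!\cdot\!\overline v$ lies in the span of $x\cdots$ — no, rather that $u\!\cdot\!\overline y + x\!\cdot\!\overline v = 0$ exactly captures $l_m$-membership, by checking both that every element of $l_m$ satisfies it (a one-line substitution plus $\overline{x^{-1}}\cdot\overline y\cdot\cdots$) and, conversely, that the solution set of \eqref{condition} has real dimension $8$ and contains $l_m$, hence equals it. A dimension count is the safest way to close the converse without a tangle of non-associative identities. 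Once Step 1 is secured, Steps 2--4 are short, and assembling them over all points of $\OO^2$ — generic, $x=0$, and the origin — completes the proof.
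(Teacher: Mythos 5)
Your global strategy---characterize tangency pointwise via $T_{(x,y)}L_{m,r}=T_{(x,y)}l_m\cap T_{(x,y)}\mathrm{S}(r)$ (a transverse intersection, since the radial vector lies in $l_m$), treat $x=0$ separately, and let continuity handle the origin---is sound, and differs from the paper only in that the paper runs the same algebra along integral curves. The genuine gap is in your Step 1: Equation \eqref{condition} alone is \emph{not} equivalent to $(u,v)\in T_{(x,y)}l_m$, i.e.\ to $v=(y\cdot x^{-1})\cdot u$. Indeed, if $v=(y\cdot x^{-1})\cdot u$, then using \eqref{semiasseqn2} and \eqref{invl} one finds
\begin{equation*}
u\cdot\overline{y}+x\cdot\overline{v}\;=\;2\,\la x,u\ra\;\overline{x^{-1}}\cdot\overline{y}\,,
\end{equation*}
so on the line \eqref{condition} holds exactly where $\la x,u\ra=0$ (for $y\neq 0$). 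In particular the radial vector $(u,v)=(x,y)$ lies in $T_{(x,y)}l_m$ yet gives $u\cdot\overline{y}+x\cdot\overline{v}=2\,x\cdot\overline{y}\neq 0$; hence the containment on which your fallback dimension count rests (``the solution set of \eqref{condition} is $8$-dimensional and contains $l_m$, hence equals it'') is false, and the discrepancy is not ``modulo the radial direction'' either: at fixed $(x,y)$ the solution set of \eqref{condition} is an $8$-plane distinct from $T_{(x,y)}l_m$, meeting it precisely in the $7$-dimensional leaf tangent space.

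The repair---and this is exactly how the paper argues---is to prove the equivalence of the two \emph{pairs} of conditions, using the orthogonality on both sides of the passage between $v=(y\cdot x^{-1})\cdot u$ and \eqref{condition}. Forward: right-multiplying $v=(y\cdot x^{-1})\cdot u$ by $x^{-1}$, the Moufang identity \eqref{moufang3} and the conjugation formula \eqref{conj} give $\|x\|^2\,v\cdot\overline{x}=2\la x,u\ra\,y\cdot\overline{x}-\|x\|^2\,y\cdot\overline{u}$, and only after discarding the first term by \eqref{orthogonal} does \eqref{condition} follow. Backward: \eqref{condition} determines $\overline{v}=-x^{-1}\cdot(u\cdot\overline{y})$, and the polarized alternativity identity $(y\cdot\overline{u})\cdot x+(y\cdot\overline{x})\cdot u=2\la x,u\ra\,y$ together with $\la x,u\ra=0$ returns $v=(y\cdot x^{-1})\cdot u$. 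With Step 1 organized this way, your Steps 2--3 (the sphere condition and the splitting $\la x,u\ra+\la y,v\ra=(1+\|m\|^2)\la x,u\ra$ on the line), the case $x=0$, and the origin all go through; note only that tangency at the origin does force the vector field to vanish there, which is indeed automatic by continuity from \eqref{condition} at nearby points $(0,y)$ and $(x,0)$.
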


\vskip 2mm \noindent \begin{proof} Assume that $\renewcommand{\arraystretch}{0.7}\begin{pmatrix}u\\v\end{pmatrix}$ is tangent to all the leaves in $\cL_{OH}$. For $x=0$, the vector field $\renewcommand{\arraystretch}{0.7}\begin{pmatrix}u\\v\end{pmatrix}$ being tangent to the octonionic line $l_\infty$ gives $u=0$, implying Equation \eqref{condition}. As the vector field is also tangent to the spheres $\mathrm{S}(r)$ for every $r\geq 0$ and $\la x,u\ra=0$, one obtains Equation \eqref{orthogonal}.

\vskip 2mm \noindent For $(x,y)=(x_0,y_0)$ with $x_0\neq 0$, consider the integral curve $\gamma\colon \R \rightarrow \OO^2$ of the vector field $\renewcommand{\arraystretch}{0.7}\begin{pmatrix}u\\v\end{pmatrix}$ satisfying $\gamma (0)=(x_0,y_0)$. Writing $\gamma(t) = (x(t),y(t))$, one has $(x(0),y(0))=(x_0,y_0)$, $\Dot{x}(t)=u(x(t),y(t))$, and $\Dot{y}(t)=v(x(t),y(t))$. The curve $\gamma$ is contained in the octonionic line $l_m$ with $m=y_0\! \cdot \!x_0^{-1}$. This implies that $m=y(t)\! \cdot \!x(t)^{-1}$ for all $t$. Consequently, $y(t)=m\! \cdot \!x(t)$, and differentiation with respect to $t$ gives $v(x(t),y(t))=m\! \cdot \! u(x(t),y(t))$. Evaluation at $t=0$ yields

\begin{equation}
      (y\! \cdot \! x^{-1})\! \cdot \! u=v \label{cond}\,.
\end{equation}

\vskip 2mm \noindent Taking the inner product of both sides with $y$ and using the identities \eqref{switch1} and \eqref{invr}, we have 
\begin{equation}
    \langle y,v \rangle = \langle y, (y\! \cdot \! x^{-1})\! \cdot \! u \rangle = \tfrac{1}{\|x\|^2}\langle (x\! \cdot \! \overline{y})\! \cdot \! y, u \rangle =\tfrac{\|y\|^2}{\|x\|^2} \langle x,u \rangle
\end{equation}
 
\vskip 2mm \noindent Since, in addition, $\langle x,u \ra + \la y,v \ra= (1+\tfrac{\|y\|^2}{\|x\|^2})\la x,u\ra=0$ by $\renewcommand{\arraystretch}{0.7}\begin{pmatrix}u\\v\end{pmatrix}$ being tangent to the spheres $\mathrm{S}^{15}(r)$, Equation \eqref{orthogonal} follows.

\vskip 2mm \noindent Multiplying both sides of Eq.\ \eqref{cond} by $x^{-1}$ from the right and using the third Moufang identity\eqref{moufang3}, we obtain

\begin{equation*}
    y  \!  \cdot  \! (x^{-1}  \! \cdot \! u  \! \cdot  \! x^{-1} ) = v \!  \cdot  \! x^{-1} \, .
\end{equation*}
Multiplying both sides by $\|x\|^4$ and using Equation \eqref{conj} imply that

\begin{equation*}
    \|x\|^2v \! \cdot  \!\Bar{x}=y  \!  \cdot  \! (\overline{x}  \! \cdot \! u  \! \cdot  \! \overline{x} )=y \!\cdot \!(2 \la \Bar{x},\Bar{u} \ra \Bar{x}-\|x\|^2\Bar{u})=-\|x\|^2y\!\cdot\!\overline{u} \, ,
\end{equation*}
which gives $ v\!\cdot\!\Bar{x}=-y \!\cdot\!\Bar{u}$ or, by conjugation, to Eq.\ \eqref{condition}.

 \vskip 2mm \noindent Conversely, assume that $\renewcommand{\arraystretch}{0.7}\begin{pmatrix}u\\v\end{pmatrix}$ satisfies Equations \eqref{condition} and \eqref{orthogonal}. These equation together with Equation \eqref{conj} and the third Moufang identity \eqref{moufang3} imply Equation \eqref{cond}, 
 by following the above calculation from back to front. 
 Now, consider some integral curve $\gamma(t) = (x(t),y(t))$. Since $\Dot{x}(t)=u(x(t),y(t))$ and $\Dot{y}(t)=v(x(t),y(t))$, Equation \eqref{orthogonal} implies that $\|x(t)\|^2$ and $\|y(t)\|^2$ are constant in $t$. As a first consequence, if $\gamma$ passes through the origin it must be a constant curve, and if it intersects $L_{\infty,r}$, it stays in $L_{\infty,r}$. If $x(t_0)\neq 0$ for some $t_0$, then one has $x(t)\neq 0$ for all $t$. Differentiation with respect to $t$ of the equation $(y(t)\!\cdot \!x(t)^{-1})\!\cdot\! x(t)=y(t)$ gives

 \begin{equation}\label{convcond}
     0=(y(t)\!\cdot\!x(t)^{-1})\!\cdot\!\Dot{x}(t)-\Dot{y}(t)+\left[\frac{\exd}{\exd t}(y(t)\!\cdot\!x(t)^{-1})\right]\!\cdot\!x(t)=\left[\frac{\exd}{\exd t}(y(t)\!\cdot\!x(t)^{-1})\right]\!\cdot\!x(t)\,,
 \end{equation}
 where we used Equation \eqref{cond} in the last equality. Since $x(t)\neq 0$, this implies that $y(t)\!\cdot\!x(t)^{-1}$ equals to a constant $m\in \OO$ and in particular $\gamma(t)=(x(t),m\!\cdot\!x(t))$ lies in the octonionic line $l_m$ for all $t$. In addition, Equation \eqref{orthogonal} ensures that it lies in some leaf of $\cL_{OH}$. We have shown that, if the integral curve $\gamma$ intersects a leaf in $\cL_{OH}$, it stays in that leaf. As a result, the vector field $\renewcommand{\arraystretch}{0.7}\begin{pmatrix}u\\v\end{pmatrix}$ is tangent to the leaves of $\cL_{OH}$.\hfill $\blacksquare$
\end{proof}

\begin{prop}\label{FOHmax}
    The singular octonionic Hopf foliation $\cF_{OH}$ is generated by all vector fields tangent to the leaves of $\cL_{OH}$.
\end{prop}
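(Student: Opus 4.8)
Denote by $\cF^{\mathrm{tan}}\subseteq\mathfrak{X}(\OO^2)$ the $C^\infty(\OO^2)$-module of all vector fields tangent to the leaves of $\cL_{OH}$; by Lemma \ref{lemchar} a vector field $(u,v)$ lies in $\cF^{\mathrm{tan}}$ precisely when it satisfies the linear system \eqref{condition}--\eqref{orthogonal}. The proposition asserts $\cF_{OH}=\cF^{\mathrm{tan}}$, and the plan is to prove the two inclusions separately.

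The inclusion $\cF_{OH}\subseteq\cF^{\mathrm{tan}}$ is the easy one. Since the orbits of $\cG$ coincide with $\cL_{OH}$ (Theorem \ref{defg}) and the anchor of the Lie algebroid of a Lie groupoid has image tangent to the orbits, every element of $\rho(\Gamma(E_0))$ --- hence every element of the $C^\infty(\OO^2)$-module $\cF_{OH}$ it generates --- is tangent to $\cL_{OH}$. Concretely, it suffices to check that the $16$ generators $\rho(e_i,0)$ and $\rho(0,e_i)$ read off from \eqref{rhoanchor} satisfy \eqref{condition} and \eqref{orthogonal}, which is a short manipulation with the octonionic identities of Section \ref{nda}.

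For the reverse inclusion I would first pass to the polynomial category. The generators $\rho(e_i,0),\rho(0,e_i)$ are vector fields with components homogeneous of degree two in $(x,y)$, and \eqref{condition}--\eqref{orthogonal} is a system of ten real-linear equations in the unknowns $(u^i,v^i)$ whose coefficients are linear polynomials in $(x,y)$. Setting $R=\R[x^0,\dots,x^7,y^0,\dots,y^7]$, encode this system as an explicit $10\times16$ matrix $A$ over $R$, let $N=\ker A\subseteq R^{16}$ be the $R$-module of polynomial solutions, and let $N_0\subseteq N$ be the $R$-submodule generated by the $16$ anchor images (so $N_0\subseteq N$ is the polynomial shadow of the previous paragraph). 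The heart of the proof is the reverse containment $N\subseteq N_0$, which is now a concrete statement about two finitely generated submodules of a free module over a polynomial ring in sixteen variables: one computes a Gröbner basis of $N$ and checks that each of its generators reduces to zero modulo $N_0$. This is exactly the Macaulay2 computation carried out in Appendix \ref{appa}, and I expect it to be the one genuinely non-routine step of the argument --- the module is too large to handle by hand, and the maximality of $\cF_{OH}$ stands or falls with it.

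It remains to lift $N=N_0$ from $R$ to $C^\infty(\OO^2)$, which I would do stalk-wise. Fix $p\in\OO^2$; since $\cF^{\mathrm{tan}}$ is the kernel of the \emph{same} matrix $A$ now acting on germs, and formation of the kernel of a matrix with polynomial entries commutes with flat base change, the flatness of the ring $C^\infty_p$ of smooth germs at $p$ over the localization $R_{\mathfrak{m}_p}$ (Malgrange) gives $(\cF^{\mathrm{tan}})_p=C^\infty_p\otimes_R N=C^\infty_p\otimes_R N_0=(\cF_{OH})_p$. Since this holds at every $p$ and, conversely, a vector field that is locally a $C^\infty$-combination of the $16$ generators is a global such combination (partition of unity), we conclude $\cF^{\mathrm{tan}}=\cF_{OH}$. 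Running the identical argument over $R$ itself, and over the ring of germs of real-analytic functions (likewise flat over $R_{\mathfrak{m}_p}$), yields the polynomial and real-analytic versions of the maximality statement announced in the abstract.
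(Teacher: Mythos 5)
Your proposal is correct and follows essentially the same route as the paper: Lemma \ref{lemchar} identifies the tangent vector fields with the kernel of the map $\mathrm{J}$, the Macaulay2 computation of Appendix \ref{appa} (a syzygy computation equivalent to your Gr\"obner-basis verification that $N\subseteq N_0$) shows this kernel is generated over the polynomial ring by the anchor images, and Malgrange flatness (Proposition \ref{poltosmooth}) transfers the equality to the smooth and real-analytic settings. The only differences are presentational: the paper reads off $N=N_0$ from a free resolution of $\mathrm{im}\,\mathrm{J}$ rather than a membership test, and leaves the easy inclusion implicit in the exactness of the resulting sequence.
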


\begin{proof}
    Using the exact sequence described in Appendix \ref{appa} for the case of $C^\infty(M)$-modules, kernel of the map $\mathrm{J}\colon \Gamma(\underline{\OO^2})\to \Gamma(\underline{\R\oplus\OO\oplus\R})$ given by
    \begin{equation*}
        \mathrm{J}\begin{pmatrix}u\\v\end{pmatrix}=\begin{pmatrix}\la x,u\ra\\u\!\cdot\!\overline{y}+x\!\cdot\!\overline{v}\\\la y,v\ra\end{pmatrix}
    \end{equation*}
is generated by the image of the anchor $\rho\colon \Gamma(E_0)\to \mathfrak{X}(\OO^2)\cong \Gamma(\underline{\OO^2})$. Lemma \ref{lemchar} implies the result.\hfill $\blacksquare$
\end{proof}

\vskip 2mm\noindent This characterization of the vector fields tangent to the leaves in $\cL_{OH}$, together with the non-associativity of octonions, gives rise to the following important lemma:

\begin{lemma}\label{nolin}
Let $\renewcommand{\arraystretch}{0.7}\renewcommand{\arraystretch}{0.7}\begin{pmatrix}u\\v\end{pmatrix}\in \mathfrak{X}(\OO^2)$ be a vector field with  $u,v\in C^\infty(\OO^2,\OO)$ linear in coordinates $x^i$ and $y^i$. If $\renewcommand{\arraystretch}{0.7}\renewcommand{\arraystretch}{0.7}\begin{pmatrix}u\\v\end{pmatrix}$ is tangent to $\cL_{OH}$, then $u=v\equiv 0$.
\end{lemma}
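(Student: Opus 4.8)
The plan is to start from the characterization in Lemma \ref{lemchar}: a vector field $\begin{pmatrix}u\\v\end{pmatrix}$ is tangent to $\cL_{OH}$ iff $u\cdot\overline{y}+x\cdot\overline{v}=0$ and $\la x,u\ra=\la y,v\ra=0$ for all $(x,y)\in\OO^2$. Assuming $u,v$ are homogeneous of degree $1$ in the coordinates $x^i,y^i$, I would write $u = A(x) + B(y)$ and $v = C(x) + D(y)$, where $A,B,C,D\colon\OO\to\OO$ are fixed $\R$-linear maps. The goal is to show the two functional identities force $A=B=C=D=0$.

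First I would exploit Equation \eqref{condition}, namely $u\cdot\overline y + x\cdot\overline v = 0$, which becomes $(A(x)+B(y))\cdot\overline y + x\cdot\overline{(C(x)+D(y))} = 0$ identically in $x,y$. Setting $y=0$ gives $x\cdot\overline{C(x)}=0$ for all $x$; since $\OO$ is a division algebra this forces $\overline{C(x)}=0$, i.e.\ $C\equiv 0$ (for $x\neq 0$, and then everywhere by linearity). Symmetrically, setting $x=0$ gives $B(y)\cdot\overline y = 0$ for all $y$, hence $B\equiv 0$. The identity then reduces to $A(x)\cdot\overline y + x\cdot\overline{D(y)} = 0$ for all $x,y$. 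This is the heart of the matter: it is a ``bilinear'' constraint tying $A$ to $D$. Taking the real part and using \eqref{inpbar} gives $\la A(x),y\ra = -\la x, D(y)\ra$, so $D = -A^{\mathrm{t}}$ (adjoint with respect to the inner product); the remaining content is the vanishing of the imaginary part of $A(x)\cdot\overline y + x\cdot\overline{D(y)}$.

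The main obstacle — and where non-associativity enters essentially — is showing that the surviving identity $A(x)\cdot\overline y = -x\cdot\overline{D(y)} = x\cdot\overline{A^{\mathrm t}(y)}$ has only the zero solution. I would first extract the orthogonality condition \eqref{orthogonal}: $\la x,u\ra = \la x,A(x)\ra = 0$ for all $x$ says the symmetric part of $A$ vanishes, so $A$ is skew-adjoint, i.e.\ $A^{\mathrm t}=-A$ and hence $D = A$. So the identity becomes $A(x)\cdot\overline y + x\cdot\overline{A(y)}=0$ for all $x,y$, with $A$ skew-adjoint. Polarizing/symmetrizing in $x\leftrightarrow y$: swapping and adding gives $A(x)\cdot\overline y + A(y)\cdot\overline x + x\cdot\overline{A(y)} + y\cdot\overline{A(x)} = 0$, and using $\overline{a\cdot b}=\overline b\cdot\overline a$ together with $\la a,b\ra$-symmetrization, one sees $A(x)\cdot\overline y + y\cdot\overline{A(x)} = 2\la A(x),y\ra$ and similarly for the other pair, so the symmetrized identity reduces to $\la A(x),y\ra = 0$ for all $x,y$, forcing $A\equiv 0$ again — but one must be careful this is really a consequence and not circular. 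Cleanly: from $A(x)\cdot\overline y = -x\cdot\overline{A(y)}$, set $y=1$ to get $A(x) = -x\cdot\overline{A(1)}$; write $\overline{A(1)} = c$, so $A(x) = -x\cdot c$ is right multiplication by $-c$. Skew-adjointness $\la A(x),x\ra = -\la x\cdot c, x\ra = -\|x\|^2\,\mathrm{Re}(c) = 0$ forces $\mathrm{Re}(c)=0$. Plugging $A(x)=-x\cdot c$ back into $A(x)\cdot\overline y + x\cdot\overline{A(y)}=0$ gives $(x\cdot c)\cdot\overline y = x\cdot\overline{(y\cdot c)} = x\cdot(\overline c\cdot\overline y) = -x\cdot(c\cdot\overline y)$ (using $\mathrm{Re}(c)=0$), i.e.\ $(x\cdot c)\cdot\overline y + x\cdot(c\cdot\overline y) = 0$ for all $x,y$. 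Setting $y=1$: $x\cdot c + x\cdot c = 2x\cdot c = 0$ for all $x$, hence $c=0$, hence $A\equiv 0$, hence $D\equiv 0$, and therefore $u=v\equiv 0$. (Alternatively, if I want to lean on earlier work, this is exactly the statement that the anchor $\rho$ in \eqref{rhoanchor} has no linear sections in its image giving a linear vector field, but the direct computation above via Lemma \ref{lemchar} is self-contained.)

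I expect the only delicate point is bookkeeping the conjugations and the non-associative rearrangements in the last step; the structural skeleton — kill $B,C$ by the division-algebra property, reduce $D$ to $\pm A^{\mathrm t}$, reduce $A$ to a one-sided multiplication by evaluating at the unit, then kill that multiplier — is robust and short.
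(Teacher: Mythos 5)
Your reduction follows the paper's skeleton quite closely (and in fact fills in its terse first step: killing the cross terms $B,C$ by setting $y=0$ resp.\ $x=0$ in \eqref{condition}, then reducing $A$ and $D$ to right multiplications by evaluating at the unit, and using \eqref{orthogonal} to force the multiplier $c$ to be imaginary). But the final step contains a sign error that is not cosmetic: substituting $A(x)=-x\cdot c$, $D(y)=A(y)=-y\cdot c$ with $\overline{c}=-c$ into $A(x)\cdot\overline{y}+x\cdot\overline{A(y)}=0$ gives $x\cdot\overline{A(y)}=-x\cdot\overline{(y\cdot c)}=-x\cdot(\overline{c}\cdot\overline{y})=+x\cdot(c\cdot\overline{y})$, so the surviving identity is $(x\cdot c)\cdot\overline{y}=x\cdot(c\cdot\overline{y})$ for all $x,y$ --- not $(x\cdot c)\cdot\overline{y}+x\cdot(c\cdot\overline{y})=0$ as you wrote. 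With the correct sign, setting $y=1$ yields the tautology $x\cdot c=x\cdot c$ and proves nothing; your conclusion $2x\cdot c=0$ comes entirely from the dropped minus sign. A quick sanity check that something must be off: your ending never uses non-associativity, so the same argument would ``prove'' the lemma over $\C$ or $\HH$, where it is false (over $\C$, $u=x\cdot i$, $v=y\cdot i$ is a nonzero linear field satisfying \eqref{condition} and \eqref{orthogonal}, corresponding to the circle action generating the complex Hopf fibration).

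The missing ingredient is exactly the paper's last move: the identity $(x\cdot c)\cdot\overline{y}=x\cdot(c\cdot\overline{y})$ for all $x,y$ says the associator $[x,c,\overline{y}]$ vanishes identically, which in $\OO$ forces $c\in\mathrm{Re}(\OO)$ (only real octonions associate with all pairs); combined with $\mathrm{Re}(c)=0$ this gives $c=0$, hence $u=v\equiv 0$. With that replacement your proof is correct and essentially the paper's. A minor side remark: your symmetrization aside does not reduce to $\la A(x),y\ra=0$ --- using \eqref{inpbar} it only returns the skew-adjointness of $A$, which you already knew --- but since you discarded that route in favor of the ``clean'' computation, this does not affect the assessment above.
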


\vskip 2mm\noindent \begin{proof} We prove the statement by contradiction. Assume that $X$ is a non-zero linear vector field on $\OO^2\cong\R^{16}$, tangent to the the leaves of $\cL_{OH}$. Equation \eqref{condition} then implies that $u$ is forced to be independent of $y$ and $v$ be independent of $x$. Consequently, there are $\R$-linear maps $A,B \colon \OO \to \OO$ such that $u(x,y)=Ax$ and $v(x,y)=By$. Now, Equation \eqref{condition} and \eqref{orthogonal} can be rewritten as
\begin{eqnarray}
         (Ax)\! \cdot \! \Bar{y}+x\! \cdot \! \overline{(By)}&= \label{condition2}
         &0 \, , \\
         \langle Ax,x \rangle &=& 0 \,  .\label{orthogonal2}
\end{eqnarray}

\vskip 2mm\noindent Choosing $y=1$ in Equation \eqref{condition2}  gives
\begin{equation*}
 Ax = -x\! \cdot \! \overline{(B1)}\,,
 \end{equation*}
which together with  Equation \eqref{orthogonal2} imply that $B1$ is an imaginary element of $\OO$. Putting $x=1$ in the last equation we obtain $A1=B1$. Similarly, with $x=1$ in Equation \eqref{condition2}, we obtain $ By = -y\! \cdot \! \overline{(A1)}$. Denoting $-\overline{(A1)}=-\overline{(B1)}  \in \mathrm{Im}(\mathbb{O})$ by $c$, we have shown
\begin{equation*}
    Ax = x \! \cdot \! c \; , \quad By = y  \! \cdot \! c\,.
\end{equation*}
This turns Equation \eqref{condition2} into

 \begin{equation*}
     (x \! \cdot \! c) \! \cdot \! \Bar{y} = x \! \cdot \! (c \! \cdot \! \Bar{y}) \, ,
 \end{equation*}
 which must hold true for all $x$ and $y$. But this is possible if and only if $c \in \mathrm{Re} (\OO)$. Since we already showed that $c$ is purely imaginary, this  implies that $c=0$, and consequently that $u=v=0$, which is in contradiction with our assumption.  \hfill $\blacksquare$
\end{proof}

\begin{deff}
    Let $\cF$ be a singular foliation on a Riemannian manifold.
    \begin{itemize}
        \item $\mathcal{F}$ is called \index{homogeneous}\emph{homogeneous} if there exists a Lie group of isometries $G\subset \mathrm{Isom(M,g)}$, such that the leaves of $\cF$ are generated as the orbits of the action.

        \item $\mathcal{F}$ is called \index{locally homogeneous}\emph{locally homogeneous} if for every point $q\in M$, there exists an open subset $U$ such that $\iota_U^{-1}\cF$ is homogeneous on $(U,g_U)$.
    \end{itemize}
    Otherwise, $\cF$ is called (locally) non-homogeneous.
\end{deff}

\vskip 2mm\noindent Since the fundamental vector fields of isometries of $(\R^n,g_{st})$ which leave the origin fixed are always linear, we have the following immediate corollary  of Lemma \ref{nolin}:
\begin{cor}\label{orthlin}
    The singular foliation $\cF_{OH}$ on $\OO^2\cong \R^{16}$, equipped with the standard Riemannian metric, is both non-homogeneous and locally non-homogeneous at the origin.
    
\end{cor}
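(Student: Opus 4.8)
The plan is to deduce the statement from Lemma \ref{nolin} by contradiction. Suppose $\cF_{OH}$ were locally homogeneous at the origin. Then there is an open neighbourhood $U$ of $0$ in $\OO^2$ and a Lie group $G$ acting on $(U,g_{st}\vert_U)$ by isometries whose orbits are exactly the leaves of $\iota_U^{-1}\cF_{OH}$, i.e.\ the connected components of the intersections with $U$ of the leaves of $\cL_{OH}$. Since $\{0\}$ is one such leaf, its $G$-orbit is $\{0\}$, so every element of $G$ fixes the origin.

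Next I would use the rigidity of Euclidean isometries: an isometry of a connected open neighbourhood of $0\in\R^{16}$ that fixes $0$ is the restriction of an element of $\mathrm{O}(16)$. Hence the $G$-action is, near $0$, the restriction of a linear action, and each fundamental vector field $X_\xi$ (for $\xi\in\mathrm{Lie}(G)$) is the restriction to $U$ of a \emph{linear} vector field $\widetilde X_\xi$ on $\OO^2\cong\R^{16}$. By construction $X_\xi$ is tangent to the $G$-orbits, hence tangent to the leaves of $\cL_{OH}$ at every point of $U$.

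The final step is to promote this tangency from $U$ to all of $\OO^2$ and then apply Lemma \ref{nolin}. By Lemma \ref{lemchar}, tangency of $\widetilde X_\xi=\begin{pmatrix}u\\v\end{pmatrix}$ to $\cL_{OH}$ is equivalent to the identities \eqref{condition} and \eqref{orthogonal}; since $u$ and $v$ are linear, both sides of these identities are polynomial in $(x,y)$, so their holding on the open set $U$ forces them to hold on all of $\OO^2$. Thus $\widetilde X_\xi$ is a linear vector field on $\OO^2$ tangent to every leaf of $\cL_{OH}$, and Lemma \ref{nolin} forces $\widetilde X_\xi=0$, hence $X_\xi=0$, for all $\xi$. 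Then every $G$-orbit in $U$ is a single point, contradicting the fact that the leaves of $\cL_{OH}$ through points near but distinct from the origin are $7$-dimensional. The global (non-local) statement follows a fortiori: a global isometric Lie group action on $(\OO^2,g_{st})$ realizing $\cL_{OH}$ as its orbit decomposition would in particular fix $0$ and, restricted to a neighbourhood of $0$, reproduce the situation just excluded.

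I expect the only real subtleties to be these two ``gluing'' points: invoking Euclidean rigidity to linearize a merely \emph{local} isometric action fixing the origin, and upgrading tangency on an open set to tangency everywhere, where it is essential to use the explicit polynomial characterization of Lemma \ref{lemchar} rather than abstract tangency, since leaves of $\cL_{OH}$ need not be contained in $U$. Everything else is a direct citation of Lemmas \ref{lemchar} and \ref{nolin}.
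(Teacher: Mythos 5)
Your proposal is correct and follows essentially the same route as the paper, which treats the corollary as an immediate consequence of Lemma \ref{nolin} via the observation that fundamental vector fields of isometries of $(\R^{16},g_{st})$ fixing the origin are linear. You merely spell out the details the paper leaves implicit (Euclidean rigidity of the local action, extension of the polynomial identities \eqref{condition}--\eqref{orthogonal} from $U$ to all of $\OO^2$, and the contradiction with the $7$-dimensional leaves), and these are handled correctly.
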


\begin{rem}
    Corollary \ref{orthlin}  can be used also as an alternative proof of the classical result that the \index{octonionic Hopf fibration}\emph{octonionic Hopf fibration} of $\mathrm{S}^{15}$ is non-homogeneous, which was shown in \cite{L93} by examining all isometric Lie  group actions on $\R^{16}$, and (implicitly) \cite{GWZ86} by different methods. 
  \end{rem}
      \begin{rem}
    The local non-homogeneity of the singular leaf decomposition $\cL_{OH}$ around the origin has been discussed in \cite{MR19}. 
    \end{rem}

\vskip 2mm\noindent Finally, we improve the previous classical results, and show that local non-homogeneity does not hold even up to Hausdorff Morita equivalence.

\begin{theorem}\label{nonhom}
    Let $\cF_0$ be \emph{any} singular foliation on $\OO^2$ having $\cL_{OH}$ as its leaf decomposition. Then $(\OO^2,\cF_0)$ is not Hausdorff Morita equivalent to any locally homogeneous singular foliation $\cF$ on some Riemannian manifold $(M,g)$. 
\end{theorem}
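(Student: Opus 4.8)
The plan is to derive a contradiction from the assumption that $(\OO^2,\cF_0)\sim_{ME}(M,g,\cF)$ with $\cF$ locally homogeneous. The key tool is Theorem \ref{hme}(ii): Hausdorff Morita equivalence preserves both the foliated transversal slices and the isotropy Lie algebras at corresponding points. So the argument proceeds by computing the isotropy Lie algebra $\mathfrak{g}_0^{\cF_0}$ at the origin of $\OO^2$ and showing it is incompatible with the isotropy Lie algebra of any locally homogeneous foliation at a point it fixes.

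First I would pin down $\cF_0$ near the origin. Although $\cF_0$ is only assumed to have $\cL_{OH}$ as its leaf decomposition, Proposition \ref{FOHmax} tells us $\cF_{OH}$ is the maximal such foliation, generated by \emph{all} vector fields tangent to $\cL_{OH}$; these are exactly the solutions of \eqref{condition}--\eqref{orthogonal}. The crucial input from Lemma \ref{nolin} is that \emph{no} nonzero linear vector field is tangent to $\cL_{OH}$. Since every element of $\cF_0$ is tangent to $\cL_{OH}$, every vector field in $\cF_0$ vanishing at the origin must vanish there to order at least $2$; consequently $F_0$ (the fiber of the leaf distribution at $0$) is zero, and the short exact sequence \eqref{short} gives $\mathfrak{g}_0^{\cF_0}=\ker(\mathrm{ev}_0)=(\cF_0)_0$. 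The bracket on this space is the one induced from the Lie bracket of vector fields; because all generators vanish to order $\geq 2$, the bracket of two such vanishes to order $\geq 3$, hence lies in $\mathrm{I}_0^2\cdot\cF_0\subset\mathrm{I}_0\cdot\cF_0$, so the induced bracket on $\mathfrak{g}_0^{\cF_0}$ is identically zero: $\mathfrak{g}_0^{\cF_0}$ is abelian.

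Next I would analyze the other side. If $\cF$ on $(M,g)$ is locally homogeneous, then near the point $q$ corresponding to $0$ there is a Lie group $H$ of local isometries whose orbits are the leaves. Since the leaf through $q$ is just $\{q\}$ (it corresponds to the origin, a singleton leaf, and Morita equivalence matches leaf spaces and dimensions of leaves), $q$ is a fixed point of the $H$-action, so $H$ acts linearly on $T_qM$ via the isotropy representation, which is faithful for isometric actions fixing a point. The isotropy Lie algebra $\mathfrak{g}_q^{\cF}$ is then (a quotient/subalgebra closely related to) $\mathfrak{h}$, realized by the fundamental vector fields, which are linear in normal coordinates. The main obstacle — and the step requiring care — is to turn "abelian isotropy algebra consisting of linear vector fields" together with "the orbits are exactly the $L_{m,r}$'s, which near the origin fill out a punctured $\R^{16}$ with $8$-dimensional leaves" into a contradiction: an abelian algebra of Killing fields vanishing at $q$ acts by a commuting family of skew-symmetric endomorphisms of $T_qM$, whose orbit structure is that of a torus action; but $\cL_{OH}$ minus the origin is \emph{not} the orbit foliation of any torus (indeed not of any Lie group, by the classical non-homogeneity of the octonionic Hopf fibration, \cite{GWZ86,L93}), and more pointedly the regular leaves $L_{m,1}\cong\mathrm{S}^7$ are not orbits of an abelian group. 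I would make this precise by restricting to a small sphere $\mathrm{S}(r)$ transverse to nothing but realized inside the slice: the induced foliation on the transversal slice $S_q$ must be isomorphic to $\iota_{S_0}^{-1}\cF_0$, whose leaves near $0$ are the $7$-spheres $L_{m,r}\cap S_0$ for the octonionic Hopf fibration structure — a homogeneous-looking but genuinely non-homogeneous family — while on the $\cF$ side the slice foliation is generated by linear (isotropy) vector fields of an abelian algebra, forcing the nearby leaves to be orbits of a torus, contradicting that the octonionic Hopf fibration is not generated by any Lie group action, let alone an abelian one.

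The hard part, which I would treat most carefully, is the bookkeeping in Theorem \ref{hme}(ii): matching the \emph{singleton} leaf at the origin on the $\cF_0$ side with a singleton (hence fixed-point) leaf on the $\cF$ side — this uses that Morita equivalence preserves leaf dimension, so the corresponding leaf is $0$-dimensional and, being an orbit of $H$, is a fixed point — and then identifying $\mathfrak{g}_q^{\cF}$ with (a nonzero abelian quotient of) the isotropy representation's Lie algebra in a way that still carries the orbit-geometric information needed to invoke the classical non-homogeneity theorem. Once the slice foliations are identified, the contradiction is: on one side the transverse model of $\cF_0$ is the (punctured) octonionic Hopf fibration, a singular Riemannian foliation known not to be locally homogeneous near the singular point \cite{GWZ86,L93,MR19}; on the other side it is generated by linear vector fields of a Lie algebra acting isometrically, hence locally homogeneous — impossible. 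I expect the main technical friction to be ensuring the slice on the $\OO^2$ side genuinely reproduces the octonionic Hopf fibration (rather than some coarser foliation), which follows from $\cF_0$ having $\cL_{OH}$ exactly as its leaves together with the transversality set-up of Theorem \ref{hme}, and invoking that linear-ness of isometry-generating vector fields at a fixed point (already used in Corollary \ref{orthlin}) is what ultimately clashes with Lemma \ref{nolin}.
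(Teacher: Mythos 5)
Your overall route is the paper's own: use Theorem \ref{hme} to transport a neighborhood of the origin to a transversal slice on the $(M,\cF)$ side, argue that local homogeneity would make the slice foliation come from a linear, orthogonal isometric action on $\R^{16}$, and contradict the classical non-homogeneity results \cite{GWZ86,L93}. However, two of your steps are genuinely flawed. First, Hausdorff Morita equivalence preserves the leaf space, the transverse slice foliations and the isotropy Lie algebras---hence leaf \emph{codimension}---but \emph{not} leaf dimension (pulling back along submersions with fibers of different dimensions changes it). So you may not conclude that the leaf $L_q^{\cF}$ corresponding to the origin is the singleton $\{q\}$, that $q$ is a fixed point of the acting group, or that the fundamental vector fields are linear Killing fields vanishing at $q$; everything in your second half that rests on ``$q$ is a fixed point'' is therefore unsupported. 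The paper's proof avoids exactly this: it takes the geodesic slice $S_q=\exp_q(\nu_q^\epsilon)$ transverse to the possibly positive-dimensional leaf $L_q^{\cF}$, uses Theorem \ref{hme}(ii) to identify $(S_q,\iota_{S_q}^{-1}\cF)$ with a neighborhood of $0$ in $(\OO^2,\cF_0)$ (one point leaf, all other leaves $7$-spheres), and then lets only the \emph{stabilizer} $H_q\subset G$ act on the slice through the isotropy representation $(h,v)\mapsto \exd_qL_h(v)$, which is linear and norm-preserving; the contradiction with \cite{L93} is then drawn for that orthogonal action. Your argument needs this repair, and you yourself flag the corresponding step as ``the hard part'' without carrying it out.

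Second, the abelianness computation of $\mathfrak{g}_0^{\cF_0}$ is both unjustified and unnecessary. The inference ``the bracket vanishes to order $\geq 3$, hence lies in $\mathrm{I}_0^2\cdot\cF_0$'' is a non sequitur: for an arbitrary involutive submodule $\cF_0\subset\cF_{OH}$ with leaf decomposition $\cL_{OH}$, the order of vanishing of a vector field at the origin does not by itself control its membership in $\mathrm{I}_0\cdot\cF_0$; that would require information on the module structure of $\cF_0$, which you do not have. Moreover, even granting that $\mathfrak{g}_q^{\cF}$ is abelian, it is only a quotient of the isotropy algebra of the acting group, so it does not force the group inducing the slice foliation to be a torus; the step ``abelian foliation isotropy $\Rightarrow$ the nearby leaves are torus orbits'' is invalid. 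Fortunately none of this is needed: the contradiction you state at the end (the octonionic Hopf model on the slice versus a foliation induced by a linear orthogonal isometric action, impossible by \cite{L93}) is exactly the paper's, and it requires only the stabilizer/slice argument described above, not the abelianness detour.
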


\vskip 2mm\noindent \begin{proof}
    Assume that $(\OO^2,\cF_0)\sim_{ME}(M,\cF)$ for some locally homogeneous singular foliation $\cF$ on a Riemannian manifold $(M,g)$. Using the first part of Theorem \ref{hme}, the origin in $\OO^2$, as the zero-dimensional  leaf of $\cF_0$, corresponds to a leaf $L_q^\cF\subset M$ of $\cF$ for some $q\in M$. Denote the orthogonal complement of $T_qL_q^\cF$ in $T_qM$ by $\nu_q:=(T_qL_q^\cF)^\perp$. We define the slice $S_q\subset M$ at $q\in L_q^\cF$ 
    %{\bf ?}
    as the image of the exponential map $\mathrm{exp}_q\colon \nu_q^\epsilon\to M$, where $\nu_q^\epsilon$ stands for the vectors of length smaller than some $\epsilon>0$, chosen small enough such that $\nu_q^\epsilon$ lies inside the domain of definition of $\mathrm{exp}_q$. 
    
    \vskip 2mm\noindent The singular foliation $\iota_{S_q}^{-1}\mathcal{F}$ (see Example \ref{inclusionSF} and Proposition \ref{defpullback}) is generated by vector fields in $\cF$ tangent to the slice $S_q$. The second part of Theorem \ref{hme} then implies that the foliated manifold $(S_q,\iota_{S_q}^{-1}\cF)$ is isomorphic to a neighborhood of the origin in the foliated manifold $(\OO^2,\cF_0)$. In particular, $(S_q,\iota_{S_q}^{-1}\cF)$ has a single leaf of dimension $0$ and all the other leaves are diffeomorphic to $7$-spheres.
    
    \vskip 2mm\noindent On the other hand, by assumption there exists an open subset $U\subset M$ containing $q$, such that $\iota_{S_q}^{-1}\cF$ is induced by a Lie group $G\subset \mathrm{Isom}(U,g_U)$, acting on $U$ by isometries. Assuming that $\epsilon$ is chosen small enough to have $S_q = \exp_q(\nu_q^\epsilon)\subset U$, we claim that the stabilizer $H_q$ of $G$ at $q$ acts orthogonally on $S_q$ and determines a singular foliation $\cF'$. More precisely, since $H_q$ acts on $S_q$ by isometries and for every $h\in H_q\subset G$, we have
    \begin{equation*}
        h\!\cdot\!\mathrm{exp}_q(v)=\mathrm{exp}_q(\exd_q L_h(v))\,,
    \end{equation*}
    we can define the left Lie group action $H_q\times \nu_q^\epsilon\to\nu_q^\epsilon$ by the mapping $(h,v)\to \exd_q L_h(v)$. Note that $H_q$ acts linearly and since %{\bf is this supposed to be $g$ or $g_st$?}
    \begin{equation*}
        g(\exd_q L_h(v),\exd_q L_h(v))=L_h^*g(v,v)=g(v,v)\,,
    \end{equation*}
    it preserves the norm. Consequently, it defines an orthogonal left Lie group action on $\OO^2$. But according to \cite{L93}, an orthogonal Lie group action on $(\R^{16},g_{st})$ cannot induce $7$- dimensional leaves. This contradicts what we found above.\hfill $\blacksquare$
    \end{proof}

\section{\texorpdfstring{$\cL_{OH}$}{LOH} and singular Riemannian foliations}
\label{sec:SRF}

Throughout this section, $M$ is a smooth, polynomial or real analytic manifold and $U\subset M$ an open subset. $\cO\colon U\mapsto \cO(U)$ stands for the sheaf of smooth, polynomial or real analytic functions. We denote by $\mathfrak{X}(U)$ the $\cO(U)$-module of vector fields on $U$ and the sheaf of vector fields by $\mathfrak{X}:U\mapsto \mathfrak{X}(U)$.

\subsection{Singular Riemannian foliations}

\begin{deff}
    A subsheaf $\cF\colon U\mapsto \cF(U)$ of $\cO$-modules on $M$ is called \emph{locally finitely generated}, if for every point $q\in M$, there exists a neighborhood $U$ containing $q$ and finitely many sections $X_1,\ldots,X_N\in \cF(U)$, such that for every open subset $V\subset U$ we have
    \begin{equation*}
        \cF(V)=\la X_1\vert_V,\ldots,X_N\vert_V\ra_{\cO(V)}
    \end{equation*}
\end{deff}

\begin{deff}\label{sheafSF}
    A singular foliations---SF for short--- on an $\cO$-manifold $M$ is a subsheaf $\cF\subset\mathfrak{X}$ of the sheaf of vector fields, which is locally finitely generated and involutive, i.e.\ the $\cO(U)$-module of vector fields $\cF(U)$ is closed under the Lie bracket of vector fields. The pair $(M,\cF)$ is referred to as a foliated manifold.
\end{deff}

\vskip 2mm \noindent Theorem \ref{hermann} stays valid and implies the partition of the foliated manifold $(M,\cF)$ into leaves. Moreover, all definitions and properties mentioned in Section \ref{SFbasic} can be similarly defined and verified for Definition \ref{sheafSF} of SFs.

\begin{rem}
    The two definitions coincide in the smooth setting. More precisely, if $\cO$ is the sheaf of smooth functions on the smooth manifolds $M$, there is a one-to-one correspondence between locally finitely generated subsheaves of the sheaf of vector fields and the $C^\infty(M)$-submodules of the compactly supported vector fields \cite{GZ19}.
\end{rem} 

\vskip 2mm \noindent There are two approaches to define singular Riemannian foliations---SRF for short---on Riemannian manifolds. The first approach due to Molino \cite{M98} defines a compatibility condition between the leaf decomposition induced by $\cF$ and the Riemannian structure. This approach is summerized in the following definition.

\begin{deff}\cite{NS24}\label{gsrf}
    Let $\cF$ be a singular foliation in the Riemannian manifold $(M,g)$. The triple $(M,g,\cF)$ defines a \emph{geometric SRF}, if every geodesic starting perpendicular to a leaf, stays perpendicular to all the leaves it meets.
\end{deff}

\vskip 2mm \noindent As it is clear in this approach, it only concerns the leaf decomposition of $\cF$, and not its additional algebraic properties. To capture these additional data, the following alternative definition is proposed in \cite{NS24}.

\begin{deff}\cite{NS24}\label{msrf}
    Let $\cF$ be a singular foliation in the Riemannian manifold $(M,g)$. The triple $(M,g,\cF)$ defines a \emph{module SRF}, if for every open subset $U\subset M$ and $X\in \cF(U)$ we have
    \begin{equation}\label{srfeqn}
    \cL_Xg\in \Omega^1(M)\odot g_\flat(\cF(U))\,.
    \end{equation}
    Here, $\odot$ stands for the symmetric inner product of $1$-forms and $g_\flat: TM\to T^*M$ is the musical isomorphism given by $(q,v)\mapsto g_q(v,\cdot)$ for all $(q,v)\in TM$.
\end{deff}

\vskip 2mm \noindent In particular, for an open subset $U$ with finitely many generators $X_1,\ldots,X_N\in \cF(U)$, Equation \eqref{srfeqn} is satisfied if and only if there exist $1$-forms $\omega_a^b\in \Omega^1(U)$ such that
\begin{equation}\label{srfeqn2}
    \cL_{X_a}g=\omega_a^b\odot g(X_b,\cdot)\,.
\end{equation}

\begin{rem}
    Definitions \ref{gsrf} and \ref{msrf} are introduced in \cite{NS24} for the definition of SFs as $C^\infty(M)$-submodules of compactly supported vector fields, but it is observed that it can be directly generalized to the definition of SFs as subsheaves.
\end{rem}

\vskip 2mm \noindent In \cite{NS24}, Propositon $3.4$ verifies that both definitions agree on regular foliations, and Proposition $3.2$ states that every module SRF is also a geometric SRF. However, the converse does not hold true in general. 

\begin{example}\label{nonexrot}
    Consider the SF $\mathcal{F}\subset \mathfrak{X}$ generated by a single vector field $V:=(x^2+y^2)(x\partial_y-y\partial_x)$ on $M=\R^2$ equipped with the standard metric $\exd s^2$. The leaves are circles centered at the origin, which defines a geometric SRF, but it does not satisfy Equation \eqref{srfeqn2}. More precisely, a simple calculation implies that 
    \begin{equation*}
        \cL_V\exd s^2=4\left[\frac{x\exd x+y\exd y}{x^2+y^2}\right]\odot (\exd s^2)_\flat (V),
    \end{equation*}
    on $\R^2\setminus {(0,0)}$. Evidently, the $1$-form $\frac{x\exd x+y\exd y}{x^2+y^2}$ fails to have a smooth extension at the origin.
\end{example}

\vskip 2mm \noindent Despite the fact that the SF introduced in Example \ref{nonexrot} does not define a module SRF on $\R^2$, it is possible to find a module SRF with the same leaf decomposition. In fact, the SF $\cF_0$ generated by the vector field $V_0=x\partial_y-y\partial_x$ is a Killing vector field for the standard metric $\exd s^2$, and Equation \eqref{srfeqn} is obviously satisfied.

\subsection{$\cF_{OH}$, a counter-example}

\vskip 2mm \noindent \textbf{Question:} Let $(M,g,\cF)$ be a geometric singular Riemannian foliation. Is it possible to find a module singular Riemannian foliation $(M,g,\cF')$, having the same leaf decomposition as $(M,g,\cF)$?

\vskip 2mm \noindent Here, we claim that for $\cO$ being the sheaf of real analytic functions on a real analytic manifold, the singular octonionic Hopf foliation provides a counter-example.

\vskip 2mm \noindent Consider $\OO^2$ as a real analytic manifold, equipped with the standard Riemannian metric $g_{st}$. The restriction of $\cF_{OH}$ to $\mathrm{S}^{15}\subset \OO^2$ induces the octonionic Hopf fibration, which is known to be a regular Riemannian foliation. M98's homothetic transformation lemma \cite{M98} then implies that $(\OO^2, g_{st},\cF_{OH})$ itself defines a geometric singular Riemannian foliation.

\begin{theorem}
    Let $\cF$ be any singular foliation on the real analytic Riemannian manifold $(\OO^2,g_{st})$, having $\cL_{OH}$ as its leaf decomposition. Then the geometric singular Riemannian foliation $(\OO^2, g_{st},\mathcal{F})$ is not a module singular Riemannian foliation.
\end{theorem}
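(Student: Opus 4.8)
The plan is to argue by contradiction, feeding the module SRF identity \eqref{srfeqn2} into the two algebraic facts already established about vector fields tangent to $\cL_{OH}$ (Lemmas \ref{lemchar} and \ref{nolin}), together with the rigidity of Killing fields of flat space. Suppose $(\OO^2,g_{st},\cF)$ were a module SRF. Since $\cF$ is locally finitely generated, I would fix a ball $U$ around the origin and real analytic generators $X_1,\dots,X_N\in\cF(U)$, so that there are real analytic $1$-forms $\omega_a^b\in\Omega^1(U)$ with $\cL_{X_a}g_{st}=\omega_a^b\odot g_{st}(X_b,\cdot)$. Every $X_a$ lies in $\cF$ and is therefore tangent to the leaves of $\cL_{OH}$; since the origin is a $0$-dimensional leaf, each $X_a$ vanishes there.

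Next I would decompose, using real analyticity, each generator into homogeneous components $X_a=\sum_{k\ge 1}X_a^{(k)}$ in the Cartesian coordinates $(x^i,y^i)$ on $\OO^2\cong\R^{16}$, and observe that each $X_a^{(k)}$ is again tangent to $\cL_{OH}$. The reason is that the dilations $(x,y)\mapsto(tx,ty)$ permute the leaves of $\cL_{OH}$ — they fix each octonionic line $l_m$ and merely rescale the spheres — hence preserve the space of tangent vector fields, while pushing $X_a^{(k)}$ forward to $t^{1-k}X_a^{(k)}$; equivalently, conditions \eqref{condition}--\eqref{orthogonal} of Lemma \ref{lemchar} are homogeneous and must hold degree by degree. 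Lemma \ref{nolin} then forces $X_a^{(1)}=0$, so every generator vanishes to order at least $2$ at the origin.

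Then comes the core step, a bootstrap. Let $k_0:=\min_a \mathrm{ord}_0(X_a)$, the least order of vanishing at the origin among those generators that do not vanish identically (if all of them vanish identically we are done, as below); by the previous paragraph $k_0\ge 2$. Fix $a$ with $\mathrm{ord}_0(X_a)=k_0$. On the right-hand side of \eqref{srfeqn2} each $g_{st}(X_b,\cdot)$ vanishes to order at least $k_0$ and each $\omega_a^b$ is regular at the origin, so $\cL_{X_a}g_{st}$ vanishes to order at least $k_0$. But in Cartesian coordinates $(\cL_{X_a}g_{st})_{ij}=\partial_i X_a^j+\partial_j X_a^i$, whose degree-$(k_0-1)$ part is $\partial_i(X_a^{(k_0)})_j+\partial_j(X_a^{(k_0)})_i$; this must vanish, so $X_a^{(k_0)}$ solves the Killing equation on $(\R^{16},g_{st})$. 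Since every Killing field of flat space is affine, a nonzero homogeneous Killing field of degree $k_0\ge 2$ does not exist, forcing $X_a^{(k_0)}=0$ and contradicting $\mathrm{ord}_0(X_a)=k_0$. Hence all generators vanish identically, $\cF(U)=0$, and every leaf of $\cL_{OH}$ meeting $U$ would be a point; but a punctured neighborhood of the origin in $\OO^2$ is foliated by $7$-spheres, a contradiction. Therefore no such $\cF$ can be a module SRF.

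I expect the delicate point to be the interplay between real analyticity and the shape of \eqref{srfeqn2}: one must use that the $\omega_a^b$ are honest analytic $1$-forms, with no poles, for the vanishing-order count on the right-hand side to be legitimate. This is precisely the feature that separates $\cL_{OH}$ from the rotation example of the previous subsection, where the analogue of $\omega_a^b$ is forced to have a pole at the center, and it is also why the statement must live in the real analytic rather than the smooth category — in the smooth setting a nonzero tangent vector field may have vanishing $\infty$-jet at the origin, so the homogeneous-decomposition step no longer forces triviality.
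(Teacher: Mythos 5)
Your proposal is correct and follows essentially the same route as the paper's own proof: Taylor expansion at the origin, Lemma \ref{nolin} (via the homogeneity of conditions \eqref{condition}--\eqref{orthogonal}) to kill the linear parts, and then the module condition forcing the lowest-order homogeneous part of each tangent field to be a Killing field of $(\R^{16},g_{st})$ of degree at least $2$, hence zero. The only differences are presentational: you run the induction as a minimal-vanishing-order argument over a finite analytic generating set, and you spell out explicitly both the final contradiction with the $7$-sphere leaves of $\cL_{OH}$ and the role of real analyticity, which the paper leaves implicit.
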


\begin{proof}
    Let $(\OO^2, g_{st}, \cF)$ be a module singular Riemannian foliation on the real analytic manifold $\OO^2$, with $\cL_{OH}$ as its leaf decomposition, and consider the vector field $X\in \mathcal{F}$. The Taylor expansion around the origin gives homogeneous polynomials $P_k(x,y)$ and $Q_k(x,y)$ of degree $k$, such that

    \begin{equation*}
        X(x,y)=\la P_k(x,y),\frac{\partial}{\partial x}\ra+\la Q_k(x,y),\frac{\partial}{\partial y}\ra\,.
    \end{equation*}

    \vskip 2mm \noindent Denote the homogeneous part of degree $k$ in $X$ by $X_k$. Since $X$ is tangent to the leaves in $\cL_{OH}$, we have $X(0,0)=0$ which translates into

    \begin{equation*}
        P_0(0,0)=Q_0(x,y)\equiv 0\,.
    \end{equation*}
    
    \vskip 2mm \noindent In addition,  Equation \eqref{condition} implies that 
    \begin{equation*}
        \sum_{k=1}^\infty P_k(x,y)\!\cdot\!\overline{y}+ x\!\cdot\!\overline{Q_k(x,y)}=0
    \end{equation*}
    and in particular 
    \begin{equation*}
        \la P_1(x,y),\frac{\partial}{\partial x}\ra+\la Q_1(x,y),\frac{\partial}{\partial y}\ra=0\,.
    \end{equation*}
    
    \vskip 2mm \noindent Similarly, we can show that the linear vector field $X_1$ satisfies Equation \eqref{orthogonal}, and Lemma \ref{lemchar} implies that it is tangent to the leaves of $\cL_{OH}$. But according to Lemma \ref{nolin}, there is no there is no non-zero linear vector field tangent to the leaves in $\cL_{OH}$, showing that $X_0=X_1=0$. Consequently we have $X=\sum_{k=2}^\infty X_k$

    \vskip 2mm \noindent As a direct consequence of the result above, every element in the space $\Omega^1(\OO^2)\odot(g_{st})_\flat(\cF)$ vanishes at least quadratically at the origin.

    \vskip 2mm \noindent In the other hand, we have

    \begin{equation*}
        \cL_{X}g_{st}=\sum_{k=2}^\infty \cL_{X_k}g_{st}\,.
    \end{equation*}

    \vskip 2mm \noindent For $k=2$, the homogeneous part $\cL_{X_2}g_{st}$ is a symmetric $2$-tensor of degree $1$ in $x$ and $y$. This term cannot be an element of $\Omega^1(\OO^2)\odot(g_{st})_\flat(\cF)$, and $X_2$ is not a Killing vector field, consequently we obtain $X_2=0$.

    \vskip 2mm \noindent Recursively, we can prove that $X_k=0$ for all $k$, which implies $X=0$. This completes the proof.\hfill $\blacksquare$ 
\end{proof}

\section{Universal Lie \texorpdfstring{$\infty$}{infty}-algebroid of \texorpdfstring{$\cF_{OH}$}{FOH}}\label{lieinf}

 \vskip 2mm \noindent In \cite{LGLS20}, it is proven that for every singular foliation which admits a \index{geometric resolution}\emph{geometric resolution}, one can associate a \index{Lie $\infty$-algebroid}\emph{Lie $\infty$-algebroid} inducing it. (The necessary notions will be recalled in Section \ref{lieinfbackground}). This association turns out to be unique up to homotopy and leads to invariants of the singular foliation. In Section \ref{lie3}, we construct a Lie $3$-algebroid that represents the universal Lie $\infty$-algebroid of $\cF_{OH}$. This Lie $3$-algebroid will be used then in to prove that $\cG \Rightarrow \OO^2$ has the minimal dimension among all Lie groupoids having $\cL_{OH}$ as their orbits.

\subsection{Lie \texorpdfstring{$\infty$}{infty}-algebroids of singular foliations}\label{lieinfbackground}
\vskip 2mm \noindent In what follows, $M$ is a smooth or real analytic manifold, or an affine variety over $\K=\R$ or $\C$, whose sheaf of functions is denoted by \nomenclature{$\cO$}{\qquad Sheaf of rings of polynomial, real analytic or smooth functions}$\cO$. For every vector bundle $F\to M$, the space of sections $\Gamma(F)$ is viewed as a sheaf of $\cO$-modules.

\begin{deff}
    A (split) \index{Lie $\infty$-algebroid}\emph{Lie $\infty$-algebroid} $(E,(l_k)_{k\geq 1},\rho)$ consists of a positively-graded vector bundle $E = \bigoplus_{i \geq 0} E_{-i}$ over a manifold $M$, equipped with:

    \begin{enumerate}
        \item A family of graded skew-symmetric and multilinear maps $l_k \colon \wedge^k \Gamma(E) \to \Gamma(E)$ of degree $2 - k$, called $k$-brackets, for all integers $k \geq 1$,
        \item A vector bundle morphism $\rho \colon E_0 \to TM$, called the anchor.
    \end{enumerate}

    These must satisfy the following conditions:
    \begin{itemize}
        \item For $k \neq 2$, the $k$-brackets $l_k$ are $\mathcal{O}$-linear,
        \item The $2$-bracket $l_2$ is $\mathcal{O}$-linear, except when at least one of the entries is of degree zero: for sections $x \in \Gamma(E_0)$, $y \in \Gamma(E)$, and $f \in \mathcal{O}$, the $2$-bracket satisfies
        \begin{equation*}
            l_2(x, fy) = f \,l_2(x, y) + (\rho(x) \cdot f) \, y;
        \end{equation*}
        \item For every $x \in \Gamma(E_{-1})$, one has $\rho(l_1(x)) = 0$;
        \item The $k$-brackets satisfy the \index{higher Jacobi identities}\emph{higher Jacobi identities}: for every positive integer $n$ and sections $x_1, \ldots, x_n \in \Gamma(E)$,
        \begin{equation*}
            \sum_{i=1}^n (-1)^{i(n-i)} \sum_{\sigma \in \mathrm{Un}(i,n-i)} \epsilon(\sigma) l_{n-i+1}(l_i(x_{\sigma(1)}, \ldots, x_{\sigma(i)}), x_{\sigma(i+1)}, \ldots, x_{\sigma(n)}) = 0,
        \end{equation*}
        where $\mathrm{Un}(i, n-i)$ stands for the set of $(i, n-i)$-unshuffles, and $\epsilon(\sigma)$ is the signature of the permutation $\sigma$ given by
        \begin{equation*}
            x_{\sigma(1)} \wedge \ldots \wedge x_{\sigma(n)} = \epsilon(\sigma) x_1 \wedge \ldots \wedge x_n.
        \end{equation*}
    \end{itemize}
    
    A Lie $n$-algebroid is a Lie $\infty$-algebroid with $E_{-i}=0$ for $i\geq n$.
\end{deff}

\vskip 2mm \noindent 
The $\cO$-linearity of  $l_k$ with $k\neq 2$ implies that for these $k$ we have $l_k \colon \wedge^k E \to E$. (We do not distinguish between the vector bundle morphisms and the induced map on sections). %{\bf OK? In principle the last remark applies also to the anchor map used much earlier...; one option is to start the last sentence by: Like for the anchor map $\rho$, to make such a remark previously already or to make it only for the $l_k$s or even none of those.}

\begin{example}\label{exdg}
    A Lie $\infty$-algebroid $(E, (l_k)_{k \geq 1}, \rho)$ with $l_k = 0$ for all $k \geq 3$ is called a \index{dg-Lie algebroid}\emph{dg-Lie algebroid}. Denoting the $1$-bracket by $\exd$ and the $2$-bracket by $[\cdot,\cdot]$, the higher Jacobi identities become:
    \begin{align}
        &\exd \circ \exd = 0, \label{ddifferential}\\
        &\exd [x,y] = [\exd x, y] + (-1)^{|x|} [x, \exd y], \label{ddistr}\\
        &(-1)^{|x||z|} [x, [y, z]] + (-1)^{|y||x|} [y, [z, x]] + (-1)^{|z||y|} [z, [x, y]] = 0, \label{jacobi}
    \end{align}
    for all homogeneous sections $x, y,$ and $z$. Here, $|x|$ denotes the degree of the homogeneous section $x$.
\end{example}

\vskip 2mm \noindent 
In a Lie $\infty$-algebroid, writing $\exd^{(i)} := l_1 \vert_{E_{-i}}$, the higher Jacobi identities implies that the sequence
\begin{equation*}
    \begin{tikzcd}
        \cdots \arrow[r] & E_{-3} \arrow[r, "\exd^{(3)}"] & E_{-2} \arrow[r, "\exd^{(2)}"] & E_{-1} \arrow[r, "\exd^{(1)}"] & E_{0} \arrow[r, "\rho"] & TM
    \end{tikzcd}
\end{equation*}
is a chain complex, called the \index{linear part}\emph{linear part} of the Lie $\infty$-algebroid.

\begin{deff}\label{geomressf}
    Let $\cF$ be a singular foliation on $M$. A \index{geometric resolution of $\cF$}\emph{geometric resolution of $\cF$} is a chain complex
    \begin{equation*}
    \begin{tikzcd}
        \cdots \arrow[r] & E_{-3} \arrow[r, "\exd^{(3)}"] & E_{-2} \arrow[r, "\exd^{(2)}"] & E_{-1} \arrow[r, "\exd^{(1)}"] & E_{0} \arrow[r, "\rho"] & TM
    \end{tikzcd}
\end{equation*}
with $\rho(\Gamma(E_0))=\cF$, such that for every open subset $U\subset M$ the chain complex
\begin{equation*}
    \begin{tikzcd}
        \cdots \arrow[r] & \Gamma_U(E_{-3}) \arrow[r, "\exd^{(3)}"] & \Gamma_U(E_{-2}) \arrow[r, "\exd^{(2)}"] & \Gamma_U(E_{-1}) \arrow[r, "\exd^{(1)}"] & \Gamma_U(E_{0}) \arrow[r, "\rho"] & \cF(U)\arrow[r, ""] &0
    \end{tikzcd}
\end{equation*}
is an exact sequence of $\cO(U)$-modules. The geometric resolution $(E,\exd,\rho)$ is said to be \index{minimal at $q\in M$}\emph{minimal at $q\in M$}, if $\exd_q^{(i)}\colon E_{-i}\vert_q\to E_{(i-1)\vert_q}$ vanishes for all $i\geq 1$.
\end{deff}

\begin{lemma}[\cite{LGLS20}]\label{geomrescorres}
    Let $\cF$ be a singular foliation on an $\cO$-manifold $M$.\\
    For $\cO$ being the sheaf of smooth functions on $M$, geometric resolutions $\cF$ are in one-to-one correspondence with resolutions of the $\cO$-module $\cF$ by locally finitely generated projectvice $\cO$-modules.\\
    For $\cO$ being the sheaf of polynomial or real analytic functions on $M$, geometric resolutions are in one-to-one correspondence with resolutions of $\cF$ by finitely generated free $\cO$-modules.
\end{lemma}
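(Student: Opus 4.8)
The plan is to reduce the correspondence to the standard dictionary between vector bundles and their sheaves of sections — Serre--Swan in the smooth and, in suitable form, in the real analytic category, and triviality of algebraic vector bundles in the polynomial case — and then to verify that this dictionary carries over the two pieces of extra structure involved: exactness over \emph{every} open subset, and the anchor condition $\rho(\Gamma(E_0))=\cF$.

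First I would do the direction ``geometric resolution $\Rightarrow$ module resolution''. Given $(E,\exd,\rho)$ as in Definition \ref{geomressf}, form the sheaves of sections $U\mapsto\Gamma_U(E_{-i})$. In the smooth case these are locally finitely generated projective sheaves of $\cO$-modules — equivalently, locally free of finite rank — by Serre--Swan; in the polynomial case algebraic vector bundles over affine space are trivial, so each $\Gamma(E_{-i})$ is finitely generated free; in the real analytic case the analogous triviality follows from Grauert's Oka principle together with coherence of the structure sheaf (Cartan's Theorems A and B). The maps $\exd^{(i)}$ are $\cO$-linear by the axioms of a Lie $\infty$-algebroid, hence induce morphisms of these sheaves, and the defining property of a geometric resolution states exactly that the augmented complex $\cdots\to\Gamma_U(E_{-1})\to\Gamma_U(E_0)\to\cF(U)\to 0$ is exact for every $U$; that is precisely a resolution of the $\cO$-module $\cF$ of the required type, with the extra condition $\rho(\Gamma(E_0))=\cF$ built in.

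For the converse, given a resolution $\cdots\to P_{-2}\to P_{-1}\to P_0\to\cF\to 0$ by locally finitely generated projective (respectively finitely generated free) sheaves of $\cO$-modules, I would invoke the inverse of Serre--Swan, respectively simply pick trivial bundles, to obtain vector bundles $E_{-i}$ with $\Gamma(E_{-i})\cong P_{-i}$ canonically. Since $\cO$-linear maps between sheaves of sections of vector bundles are exactly those induced by vector bundle morphisms, the differentials of the resolution are induced by bundle maps $\exd^{(i)}\colon E_{-i}\to E_{-(i-1)}$, and the augmentation $P_0\to\cF\hookrightarrow\mathfrak{X}=\Gamma(TM)$ is induced by an anchor $\rho\colon E_0\to TM$ satisfying $\rho(\Gamma(E_0))=\cF$. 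Exactness of the resolution, evaluated over each open $U$, is the geometric-resolution condition. Finally, because Serre--Swan (respectively the trivial-bundle dictionary) is an equivalence of categories, the two passages are mutually inverse up to the canonical identification of a vector bundle with its sheaf of sections, which gives the asserted bijection.

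The substantive point, rather than the bookkeeping, is the two structural results that the statement rests on: the Serre--Swan-type theorem in the real analytic category, which uses coherence of the real analytic structure sheaf and the availability of Stein neighborhoods (Grauert), and the Quillen--Suslin theorem — more precisely, triviality of algebraic vector bundles over affine space — in the polynomial setting, which is what lets one upgrade ``projective'' to ``free'' there (and, via the same Oka-principle argument, in the real analytic setting). Once these are in place, the equivalence between ``exact over every open $U$'' and exactness of the corresponding complex of sheaves is automatic, since both sides of the correspondence are formulated sheaf-theoretically from the outset.
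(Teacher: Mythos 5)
The paper does not actually prove this lemma---it is imported verbatim from \cite{LGLS20} with a citation and no argument---so there is no in-paper proof to compare against; I can only judge your proposal on its own terms. Your treatment of the smooth half is the standard and correct dictionary: Serre--Swan (in its smooth, not-necessarily-compact form), the fact that $\cO$-linear maps between sheaves of sections are exactly bundle morphisms, and the observation that exactness over every open $U$ and the anchor condition $\rho(\Gamma(E_0))=\cF$ are literally the same data on both sides. That is surely also the argument behind the cited result.

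There is, however, a genuine gap in your justification of the second half, the upgrade from ``projective'' to ``free''. Grauert's Oka principle together with coherence does \emph{not} make real analytic vector bundles trivial: it identifies the real analytic classification of bundles with the topological one, and nontrivial examples abound (the M\"obius bundle over $S^1$, or $TS^2$, are real analytic and nontrivial). So your claimed passage from a geometric resolution to a resolution by finitely generated \emph{free} $\cO$-modules fails globally in the real analytic category; the correspondence with free resolutions only holds after restricting to open sets over which the bundles $E_{-i}$ trivialize (which is how the statement is actually used, e.g.\ when Hilbert's syzygy theorem is invoked to produce such resolutions locally), or under an additional hypothesis forcing triviality. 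Similarly, your appeal to Quillen--Suslin is correct for the polynomial case on affine space---which suffices for $M=\OO^2\cong\R^{16}$ as used in this paper---but does not cover an arbitrary affine variety $M$, where finitely generated projective modules need not be free. So either the statement should be read locally (per point, on trivializing neighborhoods), in which case your argument goes through with ``free'' obtained by trivializing rather than by an Oka-principle triviality claim, or the global ``free'' assertion needs a hypothesis you have not supplied. The converse direction (from a free resolution, take trivial bundles) is unaffected.
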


\vskip 2mm \noindent The flatness theorems of Malgrange \cite{T68} imply the following proposition on the transition between different choices of $\cO$ on $\R^n$ in the study of geometric resolutions.

\begin{prop}[\cite{LGLR22}]\label{poltosmooth}
    Let $\cF$ be a singular foliation on the affine variety $\R^n$. A geometric resolution of $F$ for $\cO$ being the sheaf of polynomial functions, is also a geometric resolution for $\cO$ being the sheaf of smooth functions. 
\end{prop}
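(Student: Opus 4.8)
The plan is to deduce this from Malgrange's flatness theorems, following the strategy of \cite{LGLR22}. By Lemma \ref{geomrescorres}, a geometric resolution of $\cF$ in the polynomial setting is precisely an exact complex of $P:=\R[x^1,\dots,x^n]$-modules
\[
\cdots \longrightarrow P^{r_2}\xrightarrow{\exd^{(2)}} P^{r_1}\xrightarrow{\exd^{(1)}} P^{r_0}\xrightarrow{\rho}\cF^{\mathrm{pol}}\longrightarrow 0\,,
\]
where $\cF^{\mathrm{pol}}=\rho(\Gamma(E_0))$ is the finitely generated polynomial submodule $\cF^{\mathrm{pol}}\subset\mathfrak X(\R^n)\cong P^n$, the bundles $E_{-i}$ are trivial, $E_{-i}\cong\underline{\R^{r_i}}$, and the maps are given by matrices with polynomial entries. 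What must be shown is that the \emph{same} complex of trivial vector bundles, now regarded over the sheaf $\cO=C^\infty$, is a geometric resolution of the smooth singular foliation $\cF$ generated by $\rho(e_1),\dots,\rho(e_{r_0})$. (This $\cF$ is a genuine SF in the sense of Definition \ref{sheafSF}: it is globally finitely generated, and it is involutive because the structure relations $[\rho(e_i),\rho(e_j)]=\sum_k f_{ij}^k\,\rho(e_k)$ already hold with \emph{polynomial} functions $f_{ij}^k$, hence in particular smoothly.)

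Exactness of a complex of sheaves of $\cO$-modules is a stalkwise condition, so I would fix $q\in\R^n$ and let $\cE_q$ denote the stalk of $\cO$, i.e.\ the ring of germs of smooth functions at $q$. The essential input is that $\cE_q$ is \emph{flat over} $P$. Indeed, by Malgrange's theorem $\cE_q$ is flat over the ring $\mathcal O^{\mathrm{an}}_q$ of germs of real-analytic functions at $q$; and $\mathcal O^{\mathrm{an}}_q$ is flat over the localization $P_{\mathfrak m_q}$, both being Noetherian local rings with the same completion $\R[[x^1-q^1,\dots,x^n-q^n]]$, which is faithfully flat over each; hence $\cE_q$ is flat over $P_{\mathfrak m_q}$ and a fortiori over $P$ (see \cite{T68}). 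Since flatness commutes with homology, tensoring the displayed resolution over $P$ with $\cE_q$ yields a complex with $H_0=\cF^{\mathrm{pol}}\otimes_P\cE_q$ and $H_i=0$ for $i\geq1$. Moreover, tensoring the inclusion $\cF^{\mathrm{pol}}\hookrightarrow P^n$ with the flat module $\cE_q$ gives an injection $\cF^{\mathrm{pol}}\otimes_P\cE_q\hookrightarrow\cE_q^n\cong(T\R^n)_q$ whose image is the $\cE_q$-submodule spanned by the germs of $\rho(e_1),\dots,\rho(e_{r_0})$, that is, the stalk $\cF_q$. Therefore
\[
\cdots \longrightarrow \cE_q^{r_1}\xrightarrow{\exd^{(1)}}\cE_q^{r_0}\xrightarrow{\rho}\cF_q\longrightarrow 0
\]
is exact for every $q\in\R^n$, so the corresponding complex of sheaves of $\cO$-modules is exact.

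It then remains to pass from stalks to sections over an arbitrary open $U\subseteq\R^n$, as required by Definition \ref{geomressf}. Every sheaf of modules over the soft sheaf of rings $\cO=C^\infty$ on the paracompact manifold $\R^n$ is soft, hence $\Gamma_U$-acyclic; equivalently one patches local exactness with a partition of unity. In either case the sequence
\[
\cdots \longrightarrow \Gamma_U(E_{-1})\xrightarrow{\exd^{(1)}}\Gamma_U(E_0)\xrightarrow{\rho}\cF(U)\longrightarrow 0
\]
of $\cO(U)$-modules is exact for every open $U$, which is exactly the defining property of a geometric resolution in the smooth setting.

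The only genuinely nontrivial step is the flatness input: everything around it is formal homological algebra, but the transition from polynomial (or analytic) to smooth coefficients while preserving exactness is precisely the content of Malgrange's theorems. A secondary point that needs care is the identification $\cF^{\mathrm{pol}}\otimes_P\cE_q\cong\cF_q$, which again uses flatness to ensure that the polynomial submodule $\cF^{\mathrm{pol}}\subset P^n$ tensors up to an \emph{honest} submodule of $\cE_q^n$ rather than merely to a module mapping to it.
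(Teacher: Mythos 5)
Your proposal is correct and follows exactly the route the paper intends: the paper states this proposition by citation to \cite{LGLR22}, remarking only that it follows from Malgrange's flatness theorems \cite{T68}, and your argument (flatness of smooth germs over analytic germs, of analytic germs over the localized polynomial ring via the common completion, exactness preserved under flat base change, then softness/partitions of unity to pass from stalks to sections over arbitrary open sets) is precisely the standard fleshing-out of that citation. No gaps of substance; the stalk-to-sections step and the identification $\cF^{\mathrm{pol}}\otimes_P\cE_q\cong\cF_q$ are handled correctly.
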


\begin{deff}\label{lieinfsf}
    Let $\cF$ be a singular foliation on $M$. A \index{universal Lie $\infty$-algebroid of $\cF$}\emph{universal Lie $\infty$-algebroid of $\cF$} is a Lie $\infty$-algebroid $(E,(l_k)_{k\geq 1},\rho)$, such that its linear part is a geometric resolution of $\cF$. If $E_{-i}=0$ for $i\geq n$, it is called a universal Lie $n$-algebroid.
\end{deff}

\vskip 2mm \noindent As it is clear from Definition \ref{lieinfsf}, for a singular foliation to be induced by a universal Lie $\infty$-algebroid, it is required to admit a geometric resolution. There are examples of singular foliations which do not admit a geometric resolution, see Example $3.38$ in \cite{LGLS20}. However, if $\cO$ is the sheaf of polynomial or real analytic functions, Hilbert’s
syzygy theorem ensures the existence of a geometric resolution in a neighborhood of every point.

\begin{theorem}[\cite{LGLS20}]
    Let $\cF$ be a singular foliation on $M$, admitting a geometric resolution $(E, \exd, \rho)$. There exist a universal Lie $\infty$-algebroid, having $(E, \exd, \rho)$ as its linear part.
\end{theorem}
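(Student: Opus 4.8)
The plan is to reformulate the problem dually on the graded manifold and then build the higher brackets one weight at a time by a standard obstruction argument, feeding in exactness of the geometric resolution at each step. Giving a split Lie $\infty$-algebroid structure on the fixed graded bundle $E=\bigoplus_{i\ge 0}E_{-i}$ is the same as giving a degree $+1$ derivation $Q$ of the graded-commutative $\cO$-algebra $\cE:=\Gamma\big(\widehat{S}(E^{*}[-1])\big)$ of functions on $E[1]$, subject to $Q^{2}=0$, with $Q$ acting on $\cO\subset\cE$ through a bundle map $\rho\colon E_{0}\to TM$ and with its part linear in the generators prescribed. Grade $\cE$ by polynomial weight; then $Q=\sum_{k\ge 1}Q_{k}$, where $Q_{k}$ has weight $k-1$ and encodes the $k$-bracket $l_{k}$: $Q_{1}$ is dual to $\exd$ together with the anchor $\rho$, and $Q_{k}$ for $k\ge 2$ is an $\cO$-linear derivation. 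I will construct $Q_{2},Q_{3},\dots$ recursively so that $Q^{2}=0$, leaving $Q_{1}$ — hence the linear part and the anchor — equal to the given data; the resulting Lie $\infty$-algebroid then has $(E,\exd,\rho)$ as its linear part by construction, and since this is a geometric resolution of $\cF$ it is a universal Lie $\infty$-algebroid of $\cF$.

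\textbf{Base of the recursion.} The weight-$0$ part of $Q^{2}$ is $Q_{1}^{2}$, which vanishes because $\exd^{(i)}\circ\exd^{(i+1)}=0$ and $\rho\circ\exd^{(1)}=0$ — precisely the statement that the linear part of a geometric resolution is a chain complex. To choose $Q_{2}$, note first that for $x,y\in\Gamma(E_{0})$ involutivity of $\cF$ gives $[\rho(x),\rho(y)]\in\cF=\rho(\Gamma(E_{0}))$, and exactness of the geometric resolution gives $\ker\rho=\exd^{(1)}(\Gamma(E_{-1}))$; hence there is $l_{2}(x,y)\in\Gamma(E_{0})$ with $\rho(l_{2}(x,y))=[\rho(x),\rho(y)]$. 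The prescribed Leibniz rule $l_{2}(x,fy)=f\,l_{2}(x,y)+(\rho(x)\cdot f)\,y$ forces the extension of $l_{2}$ to arguments with coefficients in $\cO$, while on arguments of strictly positive degree one extends by imposing that $\exd$ be a graded derivation of $l_{2}$: the would-be value of $\exd\,l_{2}(a,b)$, namely $l_{2}(\exd a,b)+(-1)^{|a|}l_{2}(a,\exd b)$, is an $\exd$-cycle of the correct degree, hence an $\exd$-boundary by exactness of $\Gamma(E_{\bullet})\to\cF\to 0$, and any primitive serves as $l_{2}(a,b)$. This is the only place where involutivity of $\cF$ enters.

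\textbf{Inductive step.} Suppose $Q_{1},\dots,Q_{n}$ have been chosen so that $Q^{2}$ vanishes in weights $0,\dots,n-1$. With the convention above, the weight-$n$ part of $Q^{2}$ equals $[Q_{1},Q_{n+1}]+S_{n}$, where $S_{n}:=\sum_{i+j=n+2,\ i,j\ge 2}Q_{i}Q_{j}$ depends only on the already-constructed $Q_{2},\dots,Q_{n}$ and is an $\cO$-linear derivation, the anchor being confined to $Q_{1}$ (its potential contribution is exactly absorbed by the Leibniz rule for $l_{2}$). The graded Jacobi identity $[Q,[Q,Q]]=0$, expanded by weight together with the assumed vanishing in lower weights, shows $[Q_{1},S_{n}]=0$; that is, $S_{n}$ is a cocycle for the differential $[Q_{1},-]$ on the space of $\cO$-linear degree $+1$ weight-$n$ derivations of $\cE$, which is, degreewise, a complex obtained by applying $\Hom$ and $\bigwedge$ functors over $\cO$ to $E_{\bullet}$ with the differential induced by $\exd$. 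Since $\Gamma(E_{\bullet})\to\cF\to 0$ is exact with each $E_{-i}$ projective (resp.\ free), by Lemma \ref{geomrescorres}, standard homological algebra yields acyclicity of this complex in the degrees in which $S_{n}$ lives. Hence $S_{n}=-[Q_{1},Q_{n+1}]$ for some admissible $Q_{n+1}$; declaring $l_{n+1}$ to be the bracket it encodes makes $Q^{2}$ vanish in weight $n$ as well, without disturbing lower weights. Iterating over all $n$ — the recursion being well posed since each $Q_{n}$ is determined after finitely many steps, and terminating if $E$ has finite length, as in the application to $\cF_{OH}$ — produces $Q$ with $Q^{2}=0$, completing the construction.

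\textbf{Expected main obstacle.} The conceptual crux, hidden in the inductive step, is the passage from ``$S_{n}$ is $[Q_{1},-]$-closed'' to ``$S_{n}$ is $[Q_{1},-]$-exact'': one must verify that exactness of the geometric resolution as a complex of sheaves of $\cO$-modules propagates, through tensor, $\bigwedge$- and $\Hom$-functors over $\cO$, to acyclicity in the relevant degrees of the complex controlling $Q$ — this is exactly where Definition \ref{geomressf} and Lemma \ref{geomrescorres} (projectivity, resp.\ freeness, of the resolving modules) are indispensable, and it is the step most in need of care. The remaining difficulties are bookkeeping: tracking signs and weights in the higher Jacobi identities so that the obstruction is genuinely a cocycle, and checking that the anchor, which makes $Q_{1}$ fail to be $\cO$-linear, never leaks beyond $Q_{1}$ thanks to the prescribed Leibniz rule for $l_{2}$.
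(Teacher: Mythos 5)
The paper does not prove this statement at all: it is quoted verbatim from \cite{LGLS20}, so there is no internal proof to compare against. Your outline is, in substance, the construction of that reference: pass to the dual degree~$+1$ derivation $Q$ on the functions of $E[1]$, filter by arity, take $Q_1$ to be the given $(\exd,\rho)$, build $l_2$ from involutivity of $\cF$ plus surjectivity of $\rho$ onto $\cF$, and then kill the arity-$n$ obstruction cocycles $S_n$ recursively, using that $\Hom$'s out of the projective (resp.\ free) modules $\Gamma(E_{-i})$ into an exact sequence stay exact. One point deserves more care than your wording suggests: the complex of $\cO$-linear maps built from $E_\bullet$ alone is \emph{not} acyclic at the $E_0$ end (its homology there is essentially $\Hom(\cdot,\cF)$), so to lift the $E_0$-valued component of $S_n$ through $\exd^{(1)}$ one must use the augmented exact sequence $\cdots\to\Gamma(E_{-1})\to\Gamma(E_0)\xrightarrow{\rho}\cF\to 0$ together with the identity $\rho\circ S_n=0$ extracted from the lower-weight relations (equivalently, work with $[Q_1,-]$ on the full algebra including the anchor term); this is exactly how involutivity and Definition~\ref{geomressf} enter in \cite{LGLS20}, and with that made explicit your argument is the same proof. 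Also, the existence of $l_2(x,y)$ for $x,y\in\Gamma(E_0)$ needs only $\rho(\Gamma(E_0))=\cF$, not $\ker\rho=\exd^{(1)}\Gamma(E_{-1})$; the latter is what controls the ambiguity in the choice, not the existence.
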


\vskip 2mm \noindent  Such a universal Lie $\infty$-algebroid is unique up to some precise notion of homotopies; for details on this and about why it is universal, see \cite{LGLS20}. %{\bf B} What is B?

%minimal is defined in the definition of the geometric resolution, Def 6.3

\vskip 2mm \noindent Let us recall that for a foliated manifold $(M,\cF)$ the minimal number of locally generating vector fields for $\cF$ around $q\in M$ is equal to the dimension of the fiber $\cF_q$. 

\begin{prop}[\cite{LGLS20}]\label{minrank}
    Let $(E, (l_k)_{k \geq 1}, \rho)$ be a universal Lie $\infty$-algebroid of a singular foliation $\cF$ on $M$. If the linear part $(E, l_1, \rho)$ is minimal %{\bf not defined}
    at $q\in M$, then $\mathrm{rank}(E_{0})=\dim(\cF_q)=:r$. In particular, the rank of every Lie algebroid inducing $\cF$ in a neighborhood of $q$ is at least $r$.
\end{prop}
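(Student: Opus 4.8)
The plan is to read off the fiber $\cF_q$ directly from the geometric resolution by passing to fibers at $q$ (equivalently, tensoring the defining exact sequence with the residue field $\R_q := \cO(U)/\mathrm{I}_q$), and to use minimality of the linear part to make the relevant connecting map vanish.

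First I would fix a neighborhood $U \ni q$ on which the linear part of $(E,(l_k)_{k\ge 1},\rho)$ is a geometric resolution of $\cF$, so that in particular the tail
\[
\Gamma_U(E_{-1}) \xrightarrow{\ \exd^{(1)}\ } \Gamma_U(E_0) \xrightarrow{\ \rho\ } \cF(U) \longrightarrow 0
\]
is exact as a sequence of $\cO(U)$-modules; equivalently $\cF(U) = \mathrm{coker}(\exd^{(1)})$. Then I would use that, for a vector bundle $F \to U$, one has $\Gamma_U(F) \otimes_{\cO(U)} \R_q = F|_q$, and that $\cF(U) \otimes_{\cO(U)} \R_q = \cF(U)/\mathrm{I}_q\!\cdot\!\cF(U)$ is (for $U$ small enough around $q$) canonically the fiber $\cF_q$ of the singular foliation. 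Applying $- \otimes_{\cO(U)} \R_q$ to the displayed right-exact sequence and using right-exactness of the tensor product yields an exact sequence
\[
E_{-1}|_q \xrightarrow{\ \exd^{(1)}_q\ } E_0|_q \xrightarrow{\ \rho_q\ } \cF_q \longrightarrow 0 ,
\]
where $\exd^{(1)}_q$ is the value at $q$ of the bundle morphism $\exd^{(1)} = l_1|_{E_{-1}}$ (tensoring a bundle map over $\cO(U)$ with $\R_q$ reproduces its fiberwise linear part). Hence $\cF_q \cong \mathrm{coker}(\exd^{(1)}_q)$. Now I would invoke the minimality hypothesis: $\exd^{(i)}_q = 0$ for all $i \ge 1$, in particular $\exd^{(1)}_q = 0$, so $\rho_q \colon E_0|_q \to \cF_q$ is an isomorphism. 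This gives $\mathrm{rank}(E_0) = \dim E_0|_q = \dim \cF_q =: r$.

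For the final clause I would argue as follows: if $A \to U'$ is any Lie algebroid on some neighborhood $U' \ni q$ with anchor $\rho_A$ such that $\rho_A(\Gamma(A)) = \cF$ there, then a local frame of $A$ near $q$ provides $\mathrm{rank}(A)$ vector fields generating $\cF$ in a neighborhood of $q$, so $\mathrm{rank}(A) \ge \dim \cF_q = r$ by Proposition \ref{mingen}; alternatively, tensoring the surjection $\rho_A \colon \Gamma(A) \twoheadrightarrow \cF$ with $\R_q$ gives a surjection $A|_q \twoheadrightarrow \cF_q$, which yields the same inequality.

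I do not expect a serious obstacle: the argument is essentially right-exactness of $\otimes$ together with the definition of minimality, and the role of the hypothesis is precisely to upgrade the automatic surjection $\rho_q$ to an isomorphism. The only point that needs care is that all the modules involved are finitely generated, so that fiberwise (linear-algebra) statements faithfully compute the module-level invariants — in the smooth category this is the Nakayama-type content already packaged into Proposition \ref{mingen} (from \cite{AS09}) and into the definition of a geometric resolution, while in the polynomial or real-analytic case it is standard commutative algebra.
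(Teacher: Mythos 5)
The paper itself offers no proof of this proposition: it is quoted from \cite{LGLS20}, so there is nothing internal to compare against. Your argument is correct and is exactly the standard one behind that cited result: the tail of the geometric resolution is a finite presentation of $\cF(U)$, tensoring with the residue field at $q$ (right-exactness plus $\Gamma_U(E_{-i})\otimes_{\cO(U)}\R_q\cong E_{-i}|_q$ for a bundle map, which becomes evaluation at $q$) gives $\cF_q\cong\mathrm{coker}(\exd^{(1)}_q)$, and minimality kills $\exd^{(1)}_q$, so $\rho_q\colon E_0|_q\to\cF_q$ is an isomorphism and $\mathrm{rank}(E_0)=\dim\cF_q$; the final clause then follows from Proposition \ref{mingen} since a local frame of any Lie algebroid inducing $\cF$ yields $\mathrm{rank}(A)$ local generators. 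The only points requiring the care you already flag are the identification of $\cF_q$ with $\cF(U)/\mathrm{I}_q\cdot\cF(U)$ for small $U$ and the finite generation making the fiberwise (Nakayama-type) reasoning legitimate, both of which are standard and consistent with how the paper uses Proposition \ref{mingen}.
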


\subsection{The universal Lie \texorpdfstring{$3$}{3}-algebroid of of \texorpdfstring{$\cF_{OH}$}{FOH}}\label{lie3}
We start the construction of the Lie $3$-algbroid by choosing a geometric resolution for $\cF_{OH}$. Let us first view $\cF_{OH}$ as a sheaf of $\cO$-modules over the affine variety $\OO^2\cong\R^{16}$, where $\cO$ is the sheaf of polynomials functions. This is possible since $\cF_{OH}$ is generated by polynomial vector fields.  In this setting, according to Lemma \ref{geomrescorres}, it suffices to find a free resolution of the module $\cF_{OH}$. This can be done using Eisenbaud's Macaulay2, as explained in Appendix \ref{appa}. The result is as follows:

\vskip 2mm \noindent The graded vector bundle of the geometric resolution is given by the trivial vector bundles $E_{0}$, $E_{-1}$ and $E_{-2}$ over $\OO^2$, where 
\begin{align*}
    E_{0}:=\underline{\OO^2},\qquad
    E_{-1}:=\underline{\R\oplus\OO\oplus\R},\qquad
    E_{-2}:=\underline{\R}\,.
\end{align*}

\vskip 2mm \noindent  The vector bundle morphisms $\rho$, $\exd^{(1)}$ and $\exd^{(2)}$ in the sequence 
\begin{equation}
    \begin{tikzcd}
0\arrow[r] & E_{-2} \arrow[r, "\exd^{(2)}"] & E_{-1} \arrow[r, "\exd^{(1)}"] & E_{0} \arrow[r, "\rho"] & T\OO^2\cong \underline{\OO^2}
\end{tikzcd}
\end{equation}
\vskip 2mm \noindent are given by the following evaluations on the constant sections $\begin{pmatrix}
    u \\
    v
\end{pmatrix}\in \Gamma(E_{0})$, $\begin{pmatrix}
    \mu \\
    a   \\
    \nu
\end{pmatrix}\in \Gamma(E_{-1})$ and $t\in \Gamma(E_{-2})$:
\begin{align}\nonumber
\rho\begin{pmatrix}
u \\
v
\end{pmatrix}&:=\begin{pmatrix}
\|x\|^2u+(x\!\cdot\!\overline{y})\!\cdot\!v-(\la x,u\ra+\la y,v\ra)x \\
\|y\|^2v+(y\!\cdot\!\overline{x})\!\cdot\!u-(\la x,u\ra+\la y,v\ra)y
\end{pmatrix}\,,\\
    \exd^{(1)}\begin{pmatrix}
    \mu \\
    a   \\
    \nu 
\end{pmatrix}&:=\begin{pmatrix}
    \mu x+a\!\cdot\!y \\
    \nu y+\overline{a}\!\cdot\!x
\end{pmatrix}\,,\\
    \exd^{(2)}(t)&:=\begin{pmatrix}
    -\|y\|^2t \\
    (x\!\cdot\!\overline{y})t   \\
    -\|x\|^2t
\end{pmatrix}\,.\nonumber
\end{align}

\begin{lemma}
    The triple $(E\!=\!\oplus_{i=0}^2E_{-i}, \exd, \rho)$ is a geometric resolution of $\cF_{OH}$ on the smooth manifold $\OO^2\cong\R^{16}$. Moreover, this geometric resolution is minimal at the origin.
\end{lemma}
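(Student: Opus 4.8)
The plan is to verify the three defining properties of a geometric resolution (Definition \ref{geomressf}) for the given data, plus the minimality at the origin. Concretely, I must show: (a) $\rho(\Gamma(E_0)) = \cF_{OH}$; (b) the sequence $0 \to E_{-2} \xrightarrow{\exd^{(2)}} E_{-1} \xrightarrow{\exd^{(1)}} E_0 \xrightarrow{\rho} T\OO^2$ is a chain complex whose induced sequence of section modules is exact over each open set (for the polynomial structure sheaf, hence — by Proposition \ref{poltosmooth} — also over the smooth structure sheaf); and (c) $\exd^{(1)}$ and $\exd^{(2)}$ vanish at the origin, which is immediate since both formulas are homogeneous of positive degree in $(x,y)$.

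For (a), this is exactly the content of Definition \ref{deffoh}: $\cF_{OH}$ is by definition generated by $\rho(\Gamma(E_0))$, so nothing is to prove beyond recalling the definition. For (b), the chain-complex property amounts to two algebraic identities. First, $\rho \circ \exd^{(1)} = 0$: substituting $u = \mu x + a\!\cdot\!y$, $v = \nu y + \overline{a}\!\cdot\!x$ into the anchor formula and simplifying using the octonionic identities from Section \ref{basform} — in particular \eqref{a^2}, \eqref{switch1}, \eqref{switch2}, \eqref{semiasseqn2} and the Moufang identities \eqref{moufang1}--\eqref{moufang3} — should make everything cancel. Second, $\exd^{(1)} \circ \exd^{(2)} = 0$: with $\mu = -\|y\|^2 t$, $a = (x\!\cdot\!\overline y)t$, $\nu = -\|x\|^2 t$, the first component of $\exd^{(1)}\exd^{(2)}(t)$ is $-\|y\|^2 t\, x + ((x\!\cdot\!\overline y)\!\cdot\! y)\, t = -\|y\|^2 t\, x + \|y\|^2 x\, t = 0$ using \eqref{a^2} (and symmetrically for the second component via $\overline{(x\!\cdot\!\overline y)}\!\cdot\! x = (y\!\cdot\!\overline x)\!\cdot\! x = \|x\|^2 y$). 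The exactness claim — that at every point the sequences of section modules are exact — is precisely what is extracted from the Macaulay2 free-resolution computation described in Appendix \ref{appa}: by Lemma \ref{geomrescorres}, a free resolution of the polynomial module $\cF_{OH}$ is the same thing as a geometric resolution, so one invokes the output of that computation. I would also cross-check exactness at $E_0$ against Proposition \ref{FOHmax} / Lemma \ref{lemchar}: the kernel of $\mathrm{J}$ there is exactly $\rho(\Gamma(E_0))$, and $\mathrm{J}$ is essentially $\exd^{(1)}$ transposed — so the middle homology of the dualized resolution recovers the tangency equations \eqref{condition}--\eqref{orthogonal}, which is a reassuring consistency check rather than a logically necessary step.

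The main obstacle — or rather, the one step that is not purely formal — is the exactness over all open subsets, not merely over the ring of global polynomials. Macaulay2 gives a free resolution of $\cF_{OH}$ as a module over the polynomial ring $\R[x^0,\dots,x^7,y^0,\dots,y^7]$; promoting this to exactness of the corresponding sheaves of sections over every Zariski (hence every analytic, hence every smooth) open set requires invoking the flatness/localization results already cited: Lemma \ref{geomrescorres} identifies geometric resolutions with free resolutions in the polynomial and analytic categories, and Proposition \ref{poltosmooth} (Malgrange flatness) carries the polynomial geometric resolution to the smooth setting. So the structure of the argument is: prove the two cancellation identities by hand using octonionic algebra, quote Appendix \ref{appa} for the exactness of the polynomial free resolution, apply Lemma \ref{geomrescorres} and Proposition \ref{poltosmooth} to obtain a geometric resolution over the smooth manifold $\OO^2$, and observe minimality at the origin directly from the positive-degree homogeneity of $\exd^{(1)}$ and $\exd^{(2)}$.
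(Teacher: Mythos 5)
Your proposal is correct and follows essentially the same route as the paper: quote the Macaulay2 free resolution of Appendix \ref{appa} together with Lemma \ref{geomrescorres} for exactness in the polynomial setting, transfer to the smooth setting via Proposition \ref{poltosmooth} (Malgrange flatness), and read off minimality from the vanishing of $\exd^{(1)}$ and $\exd^{(2)}$ at the origin. The only difference is that your by-hand verification of $\rho\circ\exd^{(1)}=0$ and $\exd^{(1)}\circ\exd^{(2)}=0$ is redundant, since exactness of the Macaulay2 sequence already contains the chain-complex property.
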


\begin{proof}
Since $(E\!=\!\oplus_{i=0}^2E_{-i}, \exd, \rho)$ is a geometric resolution of $\cF_{OH}$ as a sheaf of modules over the ring of polynomials, Proposition \ref{poltosmooth} implies that it is also a geometric resolution for $\cF_{OH}$ when $\cO$ is the sheaf of smooth functions on $\R^n$. Clearly, both $\exd^{(1)}$ and $\exd^{(2)}$ vanish at the origin, showing that the geometric resolution is minimal at this point.\qquad$\blacksquare$
\end{proof}

\vskip 2mm \noindent We construct a degree $2$-bracket $[\cdot,\cdot]\colon \Gamma(E)\wedge \Gamma(E) \to \Gamma(E)$ of degree $0$ as follows:
\begin{align}
[\begin{pmatrix}u\\v\end{pmatrix},\begin{pmatrix}u'\\v'\end{pmatrix}]&:=(\la x,u\ra+\la y,v\ra)\begin{pmatrix}u'\\v'\end{pmatrix}-(\la x,u'\ra+\la y,v'\ra)\begin{pmatrix}u\\v\end{pmatrix}\,,\nonumber\\\,
[\begin{pmatrix}u\\v\end{pmatrix},\begin{pmatrix}\mu\\a\\\nu\end{pmatrix}]&:=\begin{pmatrix}-2\la y,\overline{a}\!\cdot\!u\ra+2\la y,v\ra\mu\\x\!\cdot\!(\overline{u}\!\cdot\!a)+(a\!\cdot\!v)\!\cdot\!\overline{y}-\mu (x\!\cdot\!\overline{v})-\nu (u\!\cdot\!\overline{y})\\-2\la x,a\!\cdot\!v\ra+2\la x,u\ra\nu\end{pmatrix}\,,\nonumber\\\,
[\begin{pmatrix}u\\v\end{pmatrix},t]&:=2(\la x,u\ra+\la y,v\ra)t\,,\nonumber\\\,
[\begin{pmatrix}\mu\\a\\\nu\end{pmatrix},\begin{pmatrix}\mu'\\a'\\\nu'\end{pmatrix}]&:=4\la a,a'\ra-2\mu\nu'-2\mu'\nu\,,
\end{align}
\vskip 2mm \noindent and extend it to all sections of $E$ using Leibniz rule. We define $l_k=0$ for all $k\geq3$. %Using Leibniz rule, we extend it to real analytic sections. The higher brackets of negative degree are defined to be zero.

\begin{prop}\label{thmlie3}
    The triple $(E, (l_k)_{k \geq 1}, \rho)$ defines a Lie $3$-algebroid. It is a universal Lie $3$-algebroid of $\cF_{OH}$, whose linear part is a geometric resolution minimal at the origin.
\end{prop}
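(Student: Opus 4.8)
The plan is to verify that $(E, (l_k)_{k\ge 1}, \rho)$ satisfies all the axioms of a Lie $\infty$-algebroid, and then to invoke Definition \ref{lieinfsf} together with the lemma already proved (that the linear part is a geometric resolution minimal at the origin). Since $l_k = 0$ for $k \ge 3$, the object in question is a dg-Lie algebroid in the sense of Example \ref{exdg}, so the higher Jacobi identities reduce to the three equations \eqref{ddifferential}, \eqref{ddistr}, \eqref{jacobi}, supplemented by the anchor compatibility $\rho \circ l_1|_{E_{-1}} = 0$, the Leibniz rule for $l_2$ with a degree-zero entry, and the fact that $\rho$ restricted to degree $0$ is a Lie algebroid anchor for the bracket \eqref{brabra} (which is Proposition \ref{coralg}). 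First I would record the elementary checks: $\exd \circ \exd = 0$, i.e.\ $\exd^{(1)} \circ \exd^{(2)} = 0$, which is part of the geometric-resolution lemma; and $\rho \circ \exd^{(1)} = 0$, which is a short octonionic computation using \eqref{switch1}, \eqref{switch2}, \eqref{a^2} and \eqref{inpbar} — note the anchor's ``$-(\la x,u\ra + \la y,v\ra)$'' correction term is precisely what kills the image of $\exd^{(1)}$.

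Next I would turn to \eqref{ddistr}, the compatibility of $\exd = l_1$ with the $2$-bracket, which must be checked on each pair of homogeneous degrees. For the pair in degrees $(0,-1)$ one needs $\exd^{(1)}[\binom{u}{v}, \binom{\mu}{a}{\nu}] = [\rho\binom{u}{v}\text{-action on }\exd^{(1)}\binom{\mu}{a}{\nu}] - [\binom{u}{v}, \exd^{(1)}\binom{\mu}{a}{\nu}]_{E_0}$; here the left side is computed by applying $\exd^{(1)}$ to the explicit triple in the definition, and the right side uses that on $\Gamma(E_0)$ the $2$-bracket is \eqref{brabra} plus the anchor-derivative term. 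For the pair $(0,-2)$ one needs $\exd^{(2)}[\binom{u}{v}, t] = -[\binom{u}{v}, \exd^{(2)}(t)]$ (signs: $|{\exd^{(2)}(t)}| = -1$), which after substituting $[\binom{u}{v},t] = 2(\la x,u\ra+\la y,v\ra)t$ becomes a scalar identity matched against the degree-$(0,-1)$ bracket applied to $\exd^{(2)}(t) = \binom{-\|y\|^2 t}{(x\cdot\bar y)t}{-\|x\|^2 t}$; and one also needs the ``derivation-of-$\exd^{(1)}$'' relation $[\exd^{(2)}(t),\cdot]$-type identity coming from the $(-1,-2)$ and $(-2,-2)$ sectors, which are forced to be consistent with the $(\mu,a,\nu)$-$(\mu',a',\nu')$ bracket $4\la a,a'\ra - 2\mu\nu' - 2\mu'\nu$ via $\exd^{(2)}$. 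Every one of these is a finite octonionic identity reducible to \eqref{semiasseqn2}, \eqref{conj}, \eqref{moufang1}--\eqref{moufang3}, \eqref{switch1}--\eqref{switch2}; I would do each in turn. Then comes \eqref{jacobi}, the graded Jacobi identity, which must hold for all triples of homogeneous sections; by degree reasons the only potentially nonzero cases involve at most one or two degree-$0$ entries (three degree-zero entries gives the Jacobi identity of the Lie algebroid $(E_0,[\cdot,\cdot],\rho)$, which we already know), and the mixed cases reduce again to octonion bookkeeping, with the $C^\infty$-derivation terms organizing themselves correctly precisely because $\exd^{(1)}, \exd^{(2)}$ are $\cO$-linear while only $l_2$ with a degree-$0$ slot is not.

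Having verified the axioms, the conclusion is immediate: by construction the linear part of $(E,(l_k),\rho)$ is the chain complex $0 \to E_{-2} \xrightarrow{\exd^{(2)}} E_{-1} \xrightarrow{\exd^{(1)}} E_0 \xrightarrow{\rho} T\OO^2$, which the preceding lemma identifies as a geometric resolution of $\cF_{OH}$ that is minimal at the origin (both $\exd^{(1)}$ and $\exd^{(2)}$ manifestly vanish at $x=y=0$). Hence by Definition \ref{lieinfsf} it is a universal Lie $3$-algebroid of $\cF_{OH}$, minimal at the origin, which is exactly the assertion of the proposition.

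I expect the main obstacle to be the sheer volume of non-associative algebra in checking \eqref{ddistr} and \eqref{jacobi}: every term of the $2$-bracket in degrees $(0,-1)$ has three pieces with nested octonion products like $x\cdot(\bar u \cdot a)$ and $(a\cdot v)\cdot \bar y$, and verifying cancellations requires careful and repeated use of the linearization identity \eqref{semiasseqn2}, the conjugation formula \eqref{conj}, and the Moufang identities \eqref{moufang1}--\eqref{moufang3}, with no shortcut from associativity. A secondary subtlety is bookkeeping the Koszul signs and the placement of the anchor-derivative correction $(\rho(x)\cdot f)y$ in the Leibniz extension of $l_2$, to be sure the identities \eqref{ddistr} and \eqref{jacobi} are being tested in exactly the form the definition demands. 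I would organize the computation degree-sector by degree-sector to keep it tractable, and where helpful cross-check individual polynomial identities with Macaulay2 as in Appendix \ref{appa}.
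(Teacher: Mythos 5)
Your proposal takes essentially the same route as the paper: since $l_k=0$ for $k\ge 3$, reduce to the dg-Lie algebroid relations of Example \ref{exdg}, note that $\rho\circ\exd=0$ and $\exd\circ\exd=0$ already follow from the geometric-resolution lemma, verify \eqref{ddistr} and \eqref{jacobi} degree sector by degree sector on constant sections using \eqref{semiasseqn2}, \eqref{conj}, \eqref{switch1}--\eqref{switch2} and the Moufang identities, and conclude via the preceding lemma and Definition \ref{lieinfsf}; this is exactly the paper's Step 1 and Step 2. Two caveats: the real substance of the paper's proof is the explicit non-associative computations in each sector, which your plan only promises rather than performs, and your bookkeeping has two slips --- in the $(0,-2)$ sector the identity to establish is $\exd^{(2)}[X,t]=[X,\exd^{(2)}t]$ for $X\in\Gamma(E_0)$ with \emph{no} minus sign, since the prefactor in \eqref{ddistr} is $(-1)^{|X|}$ with $|X|=0$ (the degree of $\exd^{(2)}(t)$ is irrelevant, and with your sign the identity would fail), and the compatibility of $\exd^{(2)}$ with the pairing $4\la a,a'\ra-2\mu\nu'-2\mu'\nu$ belongs to the $(-1,-1)$ sector, not the $(-1,-2)$ or $(-2,-2)$ sectors, which vanish for degree reasons.
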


\begin{proof}
    \noindent Since $l_k=0$ for $k\geq 3$, we have to show that $(E, (l_k)_{k \geq 1}, \rho)$ is a dg-Lie algebroid. Note that as $(E, \exd, \rho)$ is a geometric resolution, we have $\rho \circ \exd=0$ and $\exd \circ \exd=0$. It suffices to verify Equations \eqref{ddistr} and $\eqref{jacobi}$ in Example $\eqref{exdg}$. 
    
    {\bf Step $1$. Verifying Equation \eqref{ddistr}:} For constant sections $\renewcommand{\arraystretch}{0.7}\begin{pmatrix}u\\v\end{pmatrix}\in \Gamma(E_0)$ and $\renewcommand{\arraystretch}{0.7}\begin{pmatrix}\mu\\a\\\nu\end{pmatrix}\in \Gamma(E_{-1})$, we have

    \begin{align*}
        &\exd^{(1)}([\begin{pmatrix}u\\v\end{pmatrix},\begin{pmatrix}\mu\\a\\\nu\end{pmatrix}])=\exd^{(1)}\begin{pmatrix}-2\la y,\overline{a}\!\cdot\!u\ra+2\la y,v\ra\mu\\x\!\cdot\!(\overline{u}\!\cdot\!a)+(a\!\cdot\!v)\!\cdot\!\overline{y}-\mu (x\!\cdot\!\overline{v})-\nu (u\!\cdot\!\overline{y})\\-2\la x,a\!\cdot\!v\ra+2\la x,u\ra\nu\end{pmatrix}\\
        &=\begin{pmatrix} -2\la y,\overline{a}\!\cdot\!u\ra x+2\la y,v\ra\mu x+(x\!\cdot\!(\overline{u}\!\cdot\!a))\!\cdot\!y+\|y\|^2a\!\cdot\!v-\mu (x\!\cdot\!\overline{v})\!\cdot\!y-\nu\|y\|^2 u\\-2\la x,a\!\cdot\!v\ra y+2\la x,u\ra\nu y+(y\!\cdot\!(\overline{v}\!\cdot\!\overline{a}))\!\cdot\!x+\|x\|^2\overline{a}\!\cdot\!u-\nu (y\!\cdot\!\overline{u})\!\cdot\! x-\mu\|x\|^2 v\end{pmatrix}\\
        &=\begin{pmatrix} -(x\!\cdot\!\overline{y})\!\cdot\!(\overline{a}\!\cdot\!u)+\|y\|^2a\!\cdot\!v+\mu (x\!\cdot\!\overline{y})\!\cdot\!v-\nu\|y\|^2 u\\-(y\!\cdot\!\overline{x})\!\cdot\!(a\!\cdot\!v)+\|x\|^2\overline{a}\!\cdot\!u+\nu (y\!\cdot\!\overline{x})\!\cdot\! u-\mu\|x\|^2 v\end{pmatrix}\,,
    \end{align*}
    where we used Equation \eqref{semiasseqn2} to obtain the second equality, and

    \begin{align*}
        &[\begin{pmatrix}u\\v\end{pmatrix},\exd^{(1)}\begin{pmatrix}\mu\\a\\\nu\end{pmatrix}]=[\begin{pmatrix}u\\v\end{pmatrix},\begin{pmatrix} \mu x+a\!\cdot\!y\\\nu y+\overline{a}\!\cdot\!x\end{pmatrix}]\\
        &=\begin{pmatrix}\mu(\|x\|^2u+(x\!\cdot\!\overline{y})\!\cdot\!v-(\la x,u\ra+\la y,v\ra)x)+a\!\cdot\!(\|y\|^2v+(y\!\cdot\!\overline{x})\!\cdot\!u-(\la x,u\ra+\la y,v\ra)y)\\\nu(\|y\|^2v+(y\!\cdot\!\overline{x})\!\cdot\!u-(\la x,u\ra+\la y,v\ra)y)+\overline{a}\!\cdot\!(\|x\|^2u+(x\!\cdot\!\overline{y})\!\cdot\!v-(\la x,u\ra+\la y,v\ra)x)\end{pmatrix}\\
        &+(\la x,u\ra+\la y,v\ra)\begin{pmatrix}\mu x+a\!\cdot\!y\\\nu y+\overline{a}\!\cdot\!x\end{pmatrix}-(\la x, \mu x+a\!\cdot\!y\ra+\la y,\nu y+\overline{a}\!\cdot\!x\ra)\begin{pmatrix}u\\v\end{pmatrix}\,.
    \end{align*}
    After cancellations and using Equations \eqref{switch1} and \eqref{switch2} in the last term, it becomes equal to
    \begin{align*}
        =\begin{pmatrix}a\!\cdot\!((y\!\cdot\!\overline{x})\!\cdot\!u)+\|y\|^2a\!\cdot\!v+\mu(x\!\cdot\!\overline{y})\!\cdot\!v-\nu\|y\|^2u\\-\overline{a}\!\cdot\!((x\!\cdot\!\overline{y})\!\cdot\!v)+\|x\|^2\overline{a}\!\cdot\!u+\nu (y\!\cdot\!\overline{x})\!\cdot\! u-\mu\|x\|^2 v\end{pmatrix}-2\la a,x\!\cdot\!\overline{y}\ra\begin{pmatrix}u\\v\end{pmatrix}\,,
    \end{align*}
    which together with Equation \eqref{semiasseqn2} gives \[\exd^{(1)}([\begin{pmatrix}u\\v\end{pmatrix},\begin{pmatrix}\mu\\a\\\nu\end{pmatrix}])=[\begin{pmatrix}u\\v\end{pmatrix},\exd^{(1)}\begin{pmatrix}\mu\\a\\\nu\end{pmatrix}]\,.\]
    
\vskip 2mm \noindent For global constant sections $\renewcommand{\arraystretch}{0.7}\begin{pmatrix}u\\v\end{pmatrix}\in \Gamma(E_0)$ and $t\in \Gamma(E_{-2})$ one has
\begin{align*}
    \exd^{(2)}([\begin{pmatrix}u\\v\end{pmatrix},t])=2(\la x,u\ra+\la y,v\ra)\exd^{(t)}(t)=2(\la x,u\ra+\la y,v\ra)\begin{pmatrix}-\|y\|^2t\\(x\!\cdot\!\overline{y})t\\-\|x\|^2t\end{pmatrix}\,.
\end{align*}
On the other hand, since by Lemma \ref{sameslop} and Corollary \ref{corxy} the functions $\|x\|^2,\|y\|^2$ and $x\!\cdot\!\overline{y}$ are constant along orbits of $\cG\Rightarrow\OO^2$, we have
\begin{equation*}
    \rho\begin{pmatrix}u\\v\end{pmatrix}\cdot \begin{pmatrix}-\|y\|^2t\\(x\!\cdot\!\overline{y})t\\-\|x\|^2t\end{pmatrix}=\begin{pmatrix}0\\0\\0\end{pmatrix}\,,
\end{equation*}
which gives
\begin{align*}
    [\begin{pmatrix}u\\v\end{pmatrix},\exd^{(2)}t]&=[\begin{pmatrix}u\\v\end{pmatrix},\begin{pmatrix}-\|y\|^2t\\(x\!\cdot\!\overline{y})t\\-\|x\|^2t\end{pmatrix}]\\
    &=\begin{pmatrix}-2\la y,(y\!\cdot\!\overline{x})\!\cdot\!u\ra t-2\|y\|^2\la y,v\ra t\\x\!\cdot\!(\overline{u}\!\cdot\!(x\!\cdot\!\overline{y}))t+((x\!\cdot\!\overline{y})\!\cdot\!v)\!\cdot\!\overline{y}t+\|y\|^2x\!\cdot\!\overline{v}t+\|x\|^2u\!\cdot\!\overline{y} t\\-2\la x,(x\!\cdot\!\overline{y})\!\cdot\!v\ra t-2\|x\|^2\la x,u\ra t\end{pmatrix}
\end{align*}
Using Equation \eqref{switch1}, the first and the third components become $2(\la x,u\ra+\la y,v\ra)(-\|y\|^2t)$ and $2(\la x,u\ra+\la y,v\ra)(-\|x\|^2t)$, respectively. After Moufang identities \eqref{moufang2} and \eqref{moufang3}, and using Equation \eqref{semiasseqn2}, the second component equals $2(\la x,u\ra+\la y,v\ra)((x\!\cdot\!\overline{y})t)$. These together give
\[\exd^{(2)}([\begin{pmatrix}u\\v\end{pmatrix},t])=[\begin{pmatrix}u\\v\end{pmatrix},\exd^{(2)}t]\,.\]

\vskip 2mm \noindent Finally, for constant sections $\renewcommand{\arraystretch}{0.7}\begin{pmatrix}\mu\\a\\\nu\end{pmatrix},\renewcommand{\arraystretch}{0.7}\begin{pmatrix}\mu'\\a'\\\nu'\end{pmatrix}\in \Gamma(E_{-1})$ one has
\begin{align*}
    \exd^{(2)}([\begin{pmatrix}\mu\\a\\\nu\end{pmatrix},\begin{pmatrix}\mu'\\a'\\\nu'\end{pmatrix}])=\exd^{(2)}(4\la a,a'\ra-\mu\nu'-\mu'\nu)=\begin{pmatrix}-2\|y\|^2(2\la a,a'\ra-\mu\nu'-\mu'\nu)\\2(x\!\cdot\!\overline{y})(2\la a,a'\ra-\mu\nu'-\mu'\nu)\\-2\|x\|^2(2\la a,a'\ra-\mu\nu'-\mu'\nu)\end{pmatrix}\,.
\end{align*}
On the other hand, one has
\begin{small}
\begin{align*}
    &[\exd^{(1)}\begin{pmatrix}\mu\\a\\\nu\end{pmatrix},\begin{pmatrix}\mu'\\a'\\\nu'\end{pmatrix}]-[\begin{pmatrix}\mu\\a\\\nu\end{pmatrix},\exd^{(1)}\begin{pmatrix}\mu'\\a'\\\nu'\end{pmatrix}]\\
    &=[\exd^{(1)}\begin{pmatrix}\mu\\a\\\nu\end{pmatrix},\begin{pmatrix}\mu'\\a'\\\nu'\end{pmatrix}]+\{\begin{pmatrix}\mu\\a\\\nu\end{pmatrix}\xleftrightarrow{} \begin{pmatrix}\mu'\\a'\\\nu'\end{pmatrix}\}=[\begin{pmatrix} \mu x+a\!\cdot\!y\\\nu y+\overline{a}\!\cdot\!x\end{pmatrix},\begin{pmatrix}\mu'\\a'\\\nu'\end{pmatrix}]+\{\begin{pmatrix}\mu\\a\\\nu\end{pmatrix}\xleftrightarrow{} \begin{pmatrix}\mu'\\a'\\\nu'\end{pmatrix}\}\\
    &=\begin{pmatrix}-2\la y,\overline{a'}\!\cdot\!(\mu x+a\!\cdot\!y)\ra+2\la y,\nu y+\overline{a}\!\cdot\!x\ra\mu'\\x\!\cdot\!((\mu \overline{x}+\overline{y}\!\cdot\!\overline{a})\!\cdot\!a')+(a'\!\cdot\!(\nu y+\overline{a}\!\cdot\!x))\!\cdot\!\overline{y}-\mu' (x\!\cdot\!(\nu \overline{y}+\overline{x}\!\cdot\!a))-\nu' ((\mu x+a\!\cdot\!y)\!\cdot\!\overline{y})\\-2\la x,a'\!\cdot\!(\nu y+\overline{a}\!\cdot\!x)\ra+2\la x,\mu x+a\!\cdot\!y\ra\nu'\end{pmatrix}\\
    &+\{\begin{pmatrix}\mu\\a\\\nu\end{pmatrix}\xleftrightarrow{} \begin{pmatrix}\mu'\\a'\\\nu'\end{pmatrix}\}\\
    &=\begin{pmatrix}-2\|y\|^2(\la a,a'\ra-\mu'\nu)+2\la a,x\!\cdot\!\overline{y}\ra\mu'-2\la a',x\!\cdot\!\overline{y}\ra\mu\\(\mu\|x\|^2+\nu\|y\|^2)a'-(\mu'\|x\|^2+\nu'\|y\|^2)a+x\!\cdot\!((\overline{y}\!\cdot\!\overline{a})\!\cdot\!a')+(a'\!\cdot\!(\overline{a}\!\cdot\!x))\!\cdot\!\overline{y}-(x\!\cdot\!\overline{y})(\mu\nu'+\mu'\nu)\\-2\|x\|^2(\la a,a'\ra-\mu\nu')+2\la a,x\!\cdot\!\overline{y}\ra\nu'-2\la a',x\!\cdot\!\overline{y}\ra\mu'\end{pmatrix}\\
    &+\{\begin{pmatrix}\mu\\a\\\nu\end{pmatrix}\xleftrightarrow{} \begin{pmatrix}\mu'\\a'\\\nu'\end{pmatrix}\}\\
    &=\begin{pmatrix}-2\|y\|^2(2\la a,a'\ra-\mu\nu'-\mu'\nu)\\x\!\cdot\!((\overline{y}\!\cdot\!\overline{a})\!\cdot\!a'+(\overline{y}\!\cdot\!\overline{a'})\!\cdot\!a)+(a\!\cdot\!(\overline{a'}\!\cdot\!x)+a'\!\cdot\!(\overline{a}\!\cdot\!x))\!\cdot\!\overline{y}-(x\!\cdot\!\overline{y})(\mu\nu'+\mu'\nu)\\-2\|x\|^2(\la a,a'\ra-\mu\nu'-\mu'\nu)\end{pmatrix}\,,
    \end{align*}
    \end{small}
    \noindent where $\renewcommand{\arraystretch}{0.7}\{\begin{pmatrix}\mu\\a\\\nu\end{pmatrix}\xleftrightarrow{} \renewcommand{\arraystretch}{0.7}\begin{pmatrix}\mu'\\a'\\\nu'\end{pmatrix}\}$ stands for the terms obtained by switching of the sections. Equation \eqref{semiasseqn2} then implies
    \[\exd^{(2)}([\begin{pmatrix}\mu\\a\\\nu\end{pmatrix},\begin{pmatrix}\mu'\\a'\\\nu'\end{pmatrix}])=[\exd^{(1)}\begin{pmatrix}\mu\\a\\\nu\end{pmatrix},\begin{pmatrix}\mu'\\a'\\\nu'\end{pmatrix}]-[\begin{pmatrix}\mu\\a\\\nu\end{pmatrix},\exd^{(1)}\begin{pmatrix}\mu'\\a'\\\nu'\end{pmatrix}]\,.\]
    The other cases are obvious for degree reasons.

 \vskip 2mm \noindent
    {\bf Step $2$. Verifying Equation \eqref{jacobi}:} Similar to approach of the first step, we proceed by verifying Equation \eqref{jacobi} for some particular choices of constant global sections. For all the other choices, the Jacobiator vanishes for degree reasons.\\
    
    \vskip 2mm \noindent Since restriction to $E_0$ coincides with the Lie algebroid of $\cG\Rightarrow \OO^2$, for three sections of degree $0$, the Jacobi identity is already satisfied.

\vskip 2mm \noindent For constant sections $\renewcommand{\arraystretch}{0.7}\begin{pmatrix}u\\v\end{pmatrix},\renewcommand{\arraystretch}{0.7}\begin{pmatrix}u'\\v'\end{pmatrix}\in \Gamma(E_0)$ and $\renewcommand{\arraystretch}{0.7}\begin{pmatrix}\mu\\a\\\nu\end{pmatrix}\in \Gamma(E_{-1})$, we claim that
\begin{align}\label{jac001}
    [\begin{pmatrix}u\\v\end{pmatrix},[\begin{pmatrix}u'\\v'\end{pmatrix},\begin{pmatrix}\mu\\a\\\nu\end{pmatrix}]]-[\begin{pmatrix}u'\\v'\end{pmatrix},[\begin{pmatrix}u\\v\end{pmatrix},\begin{pmatrix}\mu\\a\\\nu\end{pmatrix}]]-[[\begin{pmatrix}u\\v\end{pmatrix},\begin{pmatrix}u'\\v'\end{pmatrix}],\begin{pmatrix}\mu\\a\\\nu\end{pmatrix}]=\begin{pmatrix}0\\0\\0\end{pmatrix}\,.
\end{align}
We proceed by showing that each component of the result vanishes. Note that
\begin{align*}
    [\begin{pmatrix}u\\v\end{pmatrix},[\begin{pmatrix}u'\\v'\end{pmatrix},\begin{pmatrix}\mu\\a\\\nu\end{pmatrix}]]&=[\begin{pmatrix}u\\v\end{pmatrix},\begin{pmatrix}-2\la y,\overline{a}\!\cdot\!u'\ra+2\la y,v'\ra\mu\\x\!\cdot\!(\overline{u'}\!\cdot\!a)+(a\!\cdot\!v')\!\cdot\!\overline{y}-\mu (x\!\cdot\!\overline{v'})-\nu (u'\!\cdot\!\overline{y})\\-2\la x,a\!\cdot\!v'\ra+2\la x,u'\ra\nu\end{pmatrix}]\,,
\end{align*}

and 

\begin{align*}
    [[\begin{pmatrix}u\\v\end{pmatrix},\begin{pmatrix}u'\\v'\end{pmatrix}],\begin{pmatrix}\mu\\a\\\nu\end{pmatrix}]&=[(\la x,u\ra+\la y,v\ra)\begin{pmatrix}u'\\v'\end{pmatrix}-(\la x,u'\ra+\la y,v'\ra)\begin{pmatrix}u\\v\end{pmatrix},\begin{pmatrix}\mu\\a\\\nu\end{pmatrix}]\\
    &=(\la x,u\ra+\la y,v\ra)\begin{pmatrix}-2\la y,\overline{a}\!\cdot\!u'\ra+2\la y,v'\ra\mu\\x\!\cdot\!(\overline{u'}\!\cdot\!a)+(a\!\cdot\!v')\!\cdot\!\overline{y}-\mu (x\!\cdot\!\overline{v'})-\nu (u'\!\cdot\!\overline{y})\\-2\la x,a\!\cdot\!v'\ra+2\la x,u'\ra\nu\end{pmatrix}\\
    &-\{\begin{pmatrix}u\\v\end{pmatrix}\xleftrightarrow{} \begin{pmatrix}u'\\v'\end{pmatrix}\}\,.
\end{align*}
\vskip 2mm \noindent The first component of the left-hand side of Equation \eqref{jac001} then decomposes into
\begin{align*}
    =&-2\la \|y\|^2v+(y\!\cdot\!\overline{x})\!\cdot\!u-(\la x,u\ra+\la y,v\ra)y,\overline{a}\!\cdot\!u'\ra+2\la \|y\|^2v+(y\!\cdot\!\overline{x})\!\cdot\!u-(\la x,u\ra+\la y,v\ra)y,v'\ra\mu\\
    &-2\la y,((\overline{a}\!\cdot\!u')\!\cdot\!\overline{x}+y\!\cdot\!(\overline{v'}\!\cdot\!\overline{a})-\mu (v'\!\cdot\!\overline{x})-\nu (y\!\cdot\!\overline{u'}))\!\cdot\!u\ra+2\la y,v\ra(-2\la y,\overline{a}\!\cdot\!u'\ra+2\la y,v'\ra\mu)\\
    &-(\la x,u\ra+\la y,v\ra)(-2\la y,\overline{a}\!\cdot\!u'\ra+2\la y,v'\ra\mu)\\
    &-\{\begin{pmatrix}u\\v\end{pmatrix}\xleftrightarrow{} \begin{pmatrix}u'\\v'\end{pmatrix}\}\,.
\end{align*}

\vskip 2mm \noindent Straightforward cancellations then turn it into 
\begin{align*}
    =&-2\la \|y\|^2v+(y\!\cdot\!\overline{x})\!\cdot\!u,\overline{a}\!\cdot\!u'\ra+2\la (y\!\cdot\!\overline{x})\!\cdot\!u,v'\ra\mu\\
    &-2\la y,((\overline{a}\!\cdot\!u')\!\cdot\!\overline{x}+y\!\cdot\!(\overline{v'}\!\cdot\!\overline{a})-\mu (v'\!\cdot\!\overline{x})-\nu (y\!\cdot\!\overline{u'}))\!\cdot\!u\ra\\
    &+4\la x,u\ra(\la y,\overline{a}\!\cdot\!u'\ra-\la y,v'\ra\mu)\\
    &-\{\begin{pmatrix}u\\v\end{pmatrix}\xleftrightarrow{} \begin{pmatrix}u'\\v'\end{pmatrix}\}\,,
\end{align*}
\vskip 2mm \noindent which vanishes after the following consequences of Equations \eqref{semiasseqn2}, \eqref{switch1} and \eqref{switch2}:
\begin{align*}
    \la (y\!\cdot\!\overline{x})\!\cdot\!u,v'\ra&=\la -(y\!\cdot\!\overline{u})\!\cdot\!x+2\la x,u\ra y,v'\ra=-\la y, (v'\!\cdot\!x)\!\cdot\!u\ra+2\la x,u\ra\la y,v'\ra\,,\\
    \la y,((\overline{a}\!\cdot\!u')\!\cdot\!\overline{x})\!\cdot\!u\ra&=\la (y\!\cdot\!\overline{u})\!\cdot\!x,\overline{a}\!\cdot\!u'\ra=\la -(y\!\cdot\!\overline{x})\!\cdot\!u+2\la x,u\ra y,\overline{a}\!\cdot\!u'\ra\,,\\
    \la y, (y\!\cdot\!(\overline{v'}\!\cdot\!\overline{a}))\!\cdot\!u\ra&=\la \|y\|^2v',\overline{a}\!\cdot\!u\ra\,,\\
    \la y,(y\!\cdot\!\overline{u})\!\cdot\!u'\ra&=\|y\|^2\la u,u'\ra=\la y,(y\!\cdot\!\overline{u'})\!\cdot\!u\ra\,.
\end{align*}

\vskip 2mm \noindent The third component vanishes in a similar way. Finally, the second component equals to

\begin{align*}
&=(\|x\|^2u+(x\!\cdot\!\overline{y})\!\cdot\!v-(\la x,u\ra+\la y,v\ra)x)\!\cdot\!(\overline{u'}\!\cdot\!a)+(a\!\cdot\!v')\!\cdot\!(\|y\|^2\overline{v}+\overline{u}\!\cdot\!(x\!\cdot\!\overline{y})-(\la x,u\ra+\la y,v\ra)\overline{y})\\
&-\mu ((\|x\|^2u+(x\!\cdot\!\overline{y})\!\cdot\!v-(\la x,u\ra+\la y,v\ra)x))\!\cdot\!\overline{v'})-\nu (u'\!\cdot\!(\|y\|^2\overline{v}+\overline{u}\!\cdot\!(x\!\cdot\!\overline{y})-(\la x,u\ra+\la y,v\ra)\overline{y}))\\
&+x\!\cdot\!(\overline{u}\!\cdot\!(x\!\cdot\!(\overline{u'}\!\cdot\!a)+(a\!\cdot\!v')\!\cdot\!\overline{y}-\mu (x\!\cdot\!\overline{v'})-\nu (u'\!\cdot\!\overline{y})))+((x\!\cdot\!(\overline{u'}\!\cdot\!a)+(a\!\cdot\!v')\!\cdot\!\overline{y}-\mu (x\!\cdot\!\overline{v'})-\nu (u'\!\cdot\!\overline{y})))\!\cdot\!v)\!\cdot\!\overline{y}\\
&-(-2\la y,\overline{a}\!\cdot\!u'\ra+2\la y,v'\ra\mu)\!\cdot\!(x\!\cdot\!\overline{v})-(-2\la x,a\!\cdot\!v'\ra+2\la x,u'\ra\nu)\!\cdot\! (u\!\cdot\!\overline{y})\\
&-(\la x,u\ra+\la y,v\ra)\!\cdot\!(x\!\cdot\!(\overline{u'}\!\cdot\!a)+(a\!\cdot\!v')\!\cdot\!\overline{y}-\mu (x\!\cdot\!\overline{v'})-\nu (u'\!\cdot\!\overline{y}))\\
&-\{\begin{pmatrix}u\\v\end{pmatrix}\xleftrightarrow{} \begin{pmatrix}u'\\v'\end{pmatrix}\}\,,
\end{align*}
\vskip 2mm \noindent which can be rewritten as
\begin{align*}
&=\mu (-\|x\|^2u\!\cdot\!\overline{v'}-((x\!\cdot\!\overline{y})\!\cdot\!v)\!\cdot\!\overline{v'}-(x\!\cdot\!\overline{u}\!\cdot\!x)\!\cdot\!\overline{v'}-((x\!\cdot\!\overline{v'})\!\cdot\!v)\!\cdot\!\overline{y}-2\la y,v'\ra x\!\cdot\!\overline{v}+2\la x,u\ra x\!\cdot\!v'+2\la y,v\ra x\!\cdot\!\overline{v'})\\
&+\nu(-\|y\|u'\!\cdot\!\overline{v}-u'\!\cdot\!(\overline{u}\!\cdot\!(x\!\cdot\!\overline{y}))-x\!\cdot\!(\overline{u}\!\cdot\!(u'\!\cdot\!\overline{y}))-u'\!\cdot\!(\overline{y}\!\cdot\!v\!\cdot\!\overline{y})-2\la x,u'\ra u\!\cdot\!\overline{y}+2\la x,u\ra u'\!\cdot\!\overline{y}+2\la y,v\ra u'\!\cdot\!\overline{y})\\
&+(\|x\|^2u+x\!\cdot\!\overline{u}\!\cdot\!x)\!\cdot\!(u'\!\cdot\!a)+(a\!\cdot\!v')\!\cdot\!(\|y\|^2\overline{v}+\overline{y}\!\cdot\!v\!\cdot\!\overline{y})+((x\!\cdot\!\overline{y})\!\cdot\!v)\!\cdot\!(\overline{u'}\!\cdot\!a)+(a\!\cdot\!v')\!\cdot\!(\overline{u}\!\cdot\!(x\!\cdot\!\overline{y}))\\
&+x\!\cdot\!(\overline{u}\!\cdot\!((a\!\cdot\!v')\!\cdot\!\overline{y}))+((x\!\cdot\!(\overline{u'}\!\cdot\!a))\!\cdot\!v)\!\cdot\!\overline{y}+2\la y,\overline{a}\!\cdot\!u'\ra)x\!\cdot\!\overline{v}+2\la x,a\!\cdot\!v'\ra u\!\cdot\!\overline{y}\\
&-2(\la x,u\ra+\la y,v\ra)x\!\cdot\!(\overline{u'}\!\cdot\!a)-2(\la x,u\ra+\la y,v\ra)(a\!\cdot\!v')\!\cdot\!\overline{y}\\
&-\{\begin{pmatrix}u\\v\end{pmatrix}\xleftrightarrow{} \begin{pmatrix}u'\\v'\end{pmatrix}\}\,.
\end{align*}

\vskip 2mm \noindent Here, the coefficient of $\mu$ vanishes as a result of the following identities, concluded from Equation \eqref{semiasseqn2}:

\begin{align*}
    \|x\|^2u\!\cdot\!\overline{v'}+(x\!\cdot\!\overline{u}\!\cdot\!x)\!\cdot\!\overline{v'}&=2\la x,u\ra x\!\cdot\!\overline{v'}\,,\\
((x\!\cdot\!\overline{y})\!\cdot\!v)\!\cdot\!\overline{v'}&=2\la y,v\ra x\!\cdot\!\overline{v'}-2\la y,v'\ra x\!\cdot\!\overline{v}+((x\!\cdot\!\overline{v})\!\cdot\!v')\!\cdot\!\overline{y}\,.
\end{align*}

\vskip 2mm \noindent The coefficient of $\nu$ vanishes in a similar way. Using the identities $\|x\|^2u+x\!\cdot\!\overline{u}\!\cdot\!x=2\la x,u\ra x$ and $\|y\|^2\overline{v}+\overline{y}\!\cdot\!v\!\cdot\!\overline{y}=2\la y,v\ra \overline{y}$, the remaining terms simplify into

\begin{align*}
   &=((x\!\cdot\!\overline{y})\!\cdot\!v)\!\cdot\!(\overline{u'}\!\cdot\!a)+(a\!\cdot\!v')\!\cdot\!(\overline{u}\!\cdot\!(x\!\cdot\!\overline{y}))\\
&+x\!\cdot\!(\overline{u}\!\cdot\!((a\!\cdot\!v')\!\cdot\!\overline{y}))+((x\!\cdot\!(\overline{u'}\!\cdot\!a))\!\cdot\!v)\!\cdot\!\overline{y}+2\la y,\overline{a}\!\cdot\!u'\ra)x\!\cdot\!\overline{v}+2\la x,a\!\cdot\!v'\ra u\!\cdot\!\overline{y}\\
&-2\la y,v\ra x\!\cdot\!(\overline{u'}\!\cdot\!a)-2\la x,u\ra(a\!\cdot\!v')\!\cdot\!\overline{y}\\
&-\{\begin{pmatrix}u\\v\end{pmatrix}\xleftrightarrow{} \begin{pmatrix}u'\\v'\end{pmatrix}\}\,. 
\end{align*}

\vskip 2mm \noindent Again, iterative use of Equation \eqref{semiasseqn2} gives

\begin{align*}
    ((x\!\cdot\!\overline{y})\!\cdot\!v)\!\cdot\!(\overline{u'}\!\cdot\!a)&=2\la y,v\ra x\!\cdot\!(\overline{u'}\!\cdot\!a)-2\la y,\overline{a}\!\cdot\!u'\ra x\!\cdot\!\overline{v}+2\la u',a\!\cdot\!v\ra x\!\cdot\!\overline{y}-((x\!\cdot\!(\overline{u'}\!\cdot\!a))\!\cdot\!v)\!\cdot\!\overline{y}\,,\\
    (a\!\cdot\!v')\!\cdot\!(\overline{u}\!\cdot\!(x\!\cdot\!\overline{y}))&=2\la x,u\ra (a\!\cdot\!v')\!\cdot\!\overline{y}-2\la x,a\!\cdot\!v'\ra u\!\cdot\!\overline{y}+2\la u,a\!\cdot\!v'\ra x\!\cdot\!\overline{y}-x\!\cdot\!(\overline{u}\!\cdot\!((a\!\cdot\!v')\!\cdot\!\overline{y}))\,,
\end{align*}
\vskip 2mm \noindent which implies that the second component vanishes as well, and Equation \eqref{jac001} is verified.

\vskip 2mm \noindent For global constant sections $\renewcommand{\arraystretch}{0.7}\begin{pmatrix}u\\v\end{pmatrix}, \renewcommand{\arraystretch}{0.7}\begin{pmatrix}u'\\v'\end{pmatrix}\in \Gamma(E_0)$ and $t\in \Gamma(E_{-2})$, we have

\begin{align*}
    &[\begin{pmatrix}u\\v\end{pmatrix},[\begin{pmatrix}u'\\v'\end{pmatrix},t]]-[\begin{pmatrix}u'\\v'\end{pmatrix},[\begin{pmatrix}u\\v\end{pmatrix},t]]-[[\begin{pmatrix}u\\v\end{pmatrix},\begin{pmatrix}u'\\v'\end{pmatrix}],t]\\
    &=[\begin{pmatrix}u\\v\end{pmatrix},2(\la x,u'\ra+\la y,v'\ra)t]-[(\la x,u\ra+\la y,v\ra)\begin{pmatrix}u'\\v'\end{pmatrix},t]-\{\begin{pmatrix}u\\v\end{pmatrix}\xleftrightarrow{} \begin{pmatrix}u'\\v'\end{pmatrix}\}\\
    &=2(\la \|x\|^2u+(x\!\cdot\!\overline{y})\!\cdot\!v-(\la x,u\ra+\la y,v\ra)x,u'\ra+\la \|y\|^2v+(y\!\cdot\!\overline{x})\!\cdot\!u-(\la x,u\ra+\la y,v\ra)y,v'\ra)t\\
    &+4(\la x,u\ra+\la y,v\ra)(\la x,u'\ra+\la y,v'\ra)t-2(\la x,u\ra+\la y,v\ra)(\la x,u'\ra+\la y,v'\ra)t\\
    &-\{\begin{pmatrix}u\\v\end{pmatrix}\xleftrightarrow{} \begin{pmatrix}u'\\v'\end{pmatrix}\}=0
\end{align*}

\vskip 2mm \noindent which vanishes due to the following consequences of Equation \eqref{switch1}:

\begin{align*}
    &\la (x\!\cdot\!\overline{y})\!\cdot\!v,u'\ra=\la (y\!\cdot\!\overline{x})\!\cdot\!u',v\ra\,,\\
    &\la (x\!\cdot\!\overline{y})\!\cdot\!v',u\ra=\la(y\!\cdot\!\overline{x})\!\cdot\!u,v'\ra\,.
\end{align*}

\vskip 2mm \noindent Finally, for the constant sections $\renewcommand{\arraystretch}{0.7}\begin{pmatrix}u\\v\end{pmatrix}\in \Gamma(E_0)$ and $\renewcommand{\arraystretch}{0.7}\begin{pmatrix}\mu\\a\\\nu\end{pmatrix}, \renewcommand{\arraystretch}{0.7}\begin{pmatrix}\mu'\\a'\\\nu'\end{pmatrix}\in \Gamma(E_{-1})$ we have

\begin{align*}
    &[\begin{pmatrix}u\\v\end{pmatrix},[\begin{pmatrix}\mu\\a\\\nu\end{pmatrix},\begin{pmatrix}\mu'\\a'\\\nu'\end{pmatrix}]]-[\begin{pmatrix}\mu\\a\\\nu\end{pmatrix},[\begin{pmatrix}u\\v\end{pmatrix},\begin{pmatrix}\mu'\\a'\\\nu'\end{pmatrix}]]-[\begin{pmatrix}\mu'\\a'\\\nu'\end{pmatrix}[\begin{pmatrix}u\\v\end{pmatrix},\begin{pmatrix}\mu\\a\\\nu\end{pmatrix}]]\\
    &=[\begin{pmatrix}u\\v\end{pmatrix},\mu\nu'-\la a,a'\ra]-[\begin{pmatrix}\mu\\a\\\nu\end{pmatrix},\begin{pmatrix}-2\la y,\overline{a'}\!\cdot\!u\ra+2\la y,v\ra\mu'\\x\!\cdot\!(\overline{u}\!\cdot\!a')+(a'\!\cdot\!v)\!\cdot\!\overline{y}-\mu (x\!\cdot\!\overline{v})-\nu' (u\!\cdot\!\overline{y})\\-2\la x,a'\!\cdot\!v\ra+2\la x,u\ra\nu'\end{pmatrix}]\\
    &+\{\begin{pmatrix}\mu\\a\\\nu\end{pmatrix}\xleftrightarrow{} \begin{pmatrix}\mu'\\a'\\\nu'\end{pmatrix}\}\\
    &=2(\la x,u\ra+\la y,v\ra)(\mu\nu'-\la a,a'\ra)-2\mu(-\la x,a'\!\cdot\!v\ra+\la x,u\ra\nu')\\
    &-2\nu(-\la y,\overline{a'}\!\cdot\!u\ra+\la y,v\ra\mu')+2\la a,x\!\cdot\!(\overline{u}\!\cdot\!a')+(a'\!\cdot\!v)\!\cdot\!\overline{y}-\mu' (x\!\cdot\!\overline{v})-\nu' (u\!\cdot\!\overline{y})\ra\\
    &+\{\begin{pmatrix}\mu\\a\\\nu\end{pmatrix}\xleftrightarrow{} \begin{pmatrix}\mu'\\a'\\\nu'\end{pmatrix}\}\,,
\end{align*}

which simplifies into

\begin{align*}
    &=-2(\la x,u\ra+\la y,v\ra)\la a,a'\ra+2\la y,v\ra(\mu\nu'-\mu'\nu)+2\la a,x\!\cdot\!(\overline{u}\!\cdot\!a')+(a'\!\cdot\!v)\!\cdot\!\overline{y}\ra\\
    &+2(\mu\la x,a'\!\cdot\!v\ra+\nu\la y,\overline{a'}\!\cdot\!u\ra-\mu'\la x, a\!\cdot\!v\ra-\nu'\la y,\overline{a}\!\cdot\! u\ra)\\
    &+\{\begin{pmatrix}\mu\\a\\\nu\end{pmatrix}\xleftrightarrow{} \begin{pmatrix}\mu'\\a'\\\nu'\end{pmatrix}\}\,.
\end{align*}

By anti-symmetry, the second and the fourth terms of the expression cancel with the corresponding terms in $\{\renewcommand{\arraystretch}{0.7}\begin{pmatrix}\mu\\a\\\nu\end{pmatrix}\xleftrightarrow{} \renewcommand{\arraystretch}{0.7}\begin{pmatrix}\mu'\\a'\\\nu'\end{pmatrix}\}$. The remaining terms can be written as

\begin{align*}
    =2\la a,x\!\cdot\!(\overline{u}\!\cdot\!a')+u\!\cdot\!(\overline{x}\!\cdot\!a')-2\la x,u\ra a'\ra+2\la a,(a'\!\cdot\!y)\!\cdot\!\overline{v}+(a'\!\cdot\!v)\!\cdot\!\overline{y}-2\la y,v\ra a'\ra=0
\end{align*}
which vanishes as a result of Equation \eqref{semiasseqn2}.

\hfill $\blacksquare$
\end{proof}

\begin{rem}
In the study of octonionic particles \cite{C92}, a $Q$-manifold had been introduced which encodes the data of the Lie $3$-algebroid found independently here. 
\end{rem}

\begin{theorem}\label{mindim}
    The Lie algebroid $(E_0, [\cdot, \cdot], \rho)$ and correspondingly the Lie groupoid $\mathcal{G} \Rightarrow \mathcal{O}^2$ have the minimal dimension among Lie algebroids and Lie groupoids over $\mathcal{O}^2$, with $\mathcal{F}_{OH}$ as their orbits.
\end{theorem}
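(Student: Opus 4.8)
The plan is to derive the statement directly from the universal Lie $3$-algebroid constructed in Proposition~\ref{thmlie3} together with the minimal-rank criterion of Proposition~\ref{minrank}; no new hard computation is needed. First I would fix conventions and record the two dimensions at stake. By ``a Lie algebroid (resp.\ Lie groupoid) inducing $\cF_{OH}$'' we mean that the singular foliation generated by the image of the anchor of the algebroid (resp.\ of the Lie algebroid of the groupoid) equals $\cF_{OH}$, and by the ``dimension'' of a Lie algebroid we mean the dimension of its total space, which is $\dim M + \mathrm{rank}$. Now $\OO^2\cong\R^{16}$, the bundle $E_0=\underline{\OO^2}$ has rank $16$ and total space $\OO^2\times\OO^2$, so $\dim E_0 = 32$; likewise $\cG$ is an open subset of $\OO^2\times\OO^2$, so $\dim\cG = 32$. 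It therefore suffices to prove that $16$ is a lower bound for the rank of any Lie algebroid over $\OO^2$ inducing $\cF_{OH}$, and correspondingly that $32$ is a lower bound for the dimension of any such Lie groupoid.

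For the Lie algebroid statement I would invoke Proposition~\ref{thmlie3}: it exhibits $(E,(l_k)_{k\ge 1},\rho)$ as a universal Lie $3$-algebroid of $\cF_{OH}$ whose linear part is a geometric resolution that is \emph{minimal at the origin}, with $\mathrm{rank}(E_0)=16$. Applying Proposition~\ref{minrank} at $q=0$ then yields simultaneously that $\dim\big((\cF_{OH})_0\big)=\mathrm{rank}(E_0)=16$ and that every Lie algebroid inducing $\cF_{OH}$ on some neighbourhood of the origin has rank at least $16$. Hence, given any Lie algebroid $A\to\OO^2$ with $\cF_A=\cF_{OH}$, restricting $A$ to a neighbourhood of $0$ shows $\mathrm{rank}(A)\ge 16$; since the rank of a vector bundle is locally constant and $\OO^2$ is connected, this bound holds globally, so $\dim A = 16 + \mathrm{rank}(A) \ge 32 = \dim E_0$, with equality attained by $(E_0,[\cdot,\cdot],\rho)$.

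For the Lie groupoid statement I would pass through the Lie functor: if $\cH\Rightarrow\OO^2$ is a Lie groupoid inducing $\cF_{OH}$, then $\mathrm{Lie}(\cH)$ is a Lie algebroid over $\OO^2$ inducing $\cF_{OH}$, so by the previous paragraph $\mathrm{rank}(\mathrm{Lie}(\cH))\ge 16$, and therefore $\dim\cH = \dim\OO^2 + \mathrm{rank}(\mathrm{Lie}(\cH)) = 16 + \mathrm{rank}(\mathrm{Lie}(\cH)) \ge 32 = \dim\cG$, which is realised by $\cG$. This finishes the argument. I do not anticipate a genuine obstacle here: the substantive content has already been absorbed into Proposition~\ref{thmlie3}, namely the verification that the explicit brackets define a dg-Lie algebroid and, crucially, that the resolution obtained via Macaulay2 is minimal at the origin. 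The only points requiring a little care are the bookkeeping conventions (dimension of a Lie algebroid as $\dim M + \mathrm{rank}$), the elementary local-to-global step for the rank, and the remark that the induced singular foliation of $\mathrm{Lie}(\cH)$ coincides with that of $\cH$; everything else is the dimension count above.
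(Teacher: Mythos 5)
Your proposal is correct and takes essentially the same route as the paper: the minimality at the origin of the universal Lie $3$-algebroid from Proposition~\ref{thmlie3}, together with $\mathrm{rank}(E_0)=16$ and Proposition~\ref{minrank}, gives the lower bound $16$ on the rank of any Lie algebroid inducing $\cF_{OH}$, and the groupoid bound $32$ then follows by differentiating to the Lie algebroid. Your additional bookkeeping (dimension conventions, local-to-global constancy of the rank, and that $\mathrm{Lie}(\cH)$ induces the same singular foliation) only makes explicit steps the paper leaves implicit.
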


\begin{proof}
Since the universal Lie $3$-algebroid of $\mathcal{F}_{OH}$ introduced in Proposition \ref{thmlie3} is minimal at the origin, and $\mathrm{rank}(E_0) = 16$, Proposition \ref{minrank} implies that every such Lie algebroid is of dimension at least $16$. As a consequence, since $\mathcal{O}^2$ is a $16$-dimensional manifold, every Lie groupoid inducing $\mathcal{F}_{OH}$ has a manifold of arrows with dimension at least $32$.
    \hfill $\blacksquare$
\end{proof}

\appendix
\newpage\section{Computations with Macaulay2}\label{appa}

\vskip 2mm \noindent In this appendix, we use Eisenbud's Macaulay2 to construct a short exact sequence of \(\cO\)-modules, where \(\cO\) stands for the sheaf of smooth, polynomial, or real analytic functions on \(\OO^2 \cong \mathbb{R}^{16}\). This exact sequence will be used in Section \ref{secnonhom} to prove the maximality of \(\cF_{OH}\), and in Section \ref{lie3} to construct a universal Lie \(3\)-algebroid for \(\cF_{OH}\). This can be done as follows: In \href{https://www.unimelb-macaulay2.cloud.edu.au/}{Macaulay2Web}, first specify the ring of polynomial functions on \(\mathbb{R}^{16}\) by entering the following code in the first line:

\begin{verbatim}
    R = QQ[x_0, x_1, x_2, x_3, x_4, x_5, x_6, x_7, 
           y_0, y_1, y_2, y_3, y_4, y_5, y_6, y_7]
\end{verbatim}\,

Then, considering the characterizing equations of Lemma \ref{lemchar} for vector fields tangent to the leaves in \(\cL_{OH}\), we construct the morphism of \(\cO\)-modules $\mathrm{J}\colon \Gamma(\underline{\OO^2})\to \Gamma(\underline{\R\oplus\OO\oplus\R})$ given by
    \begin{equation*}
        \mathrm{J}\begin{pmatrix}u\\v\end{pmatrix}=\begin{pmatrix}\la x,u\ra\\u\!\cdot\!\overline{y}+x\!\cdot\!\overline{v}\\\la y,v\ra\end{pmatrix}
    \end{equation*}
    whose kernel gives the desired $\cO$-module of vector fields tangent to $\cL_{OH}$. In Macaulay2 one can use the following code to specify this morphism as a matrix, which is a result of considering the octonion multiplication as a product on $\R^8$:

\begin{verbatim}
    F = image matrix {{ x_0,  x_1,  x_2,  x_3,  x_4,  x_5,  x_6,  x_7, 
                        0  ,  0  ,  0  ,  0  ,  0  ,  0  ,  0  ,  0  },
                      { y_0,  y_1,  y_2,  y_3,  y_4,  y_5,  y_6,  y_7,  
                        x_0,  x_1,  x_2,  x_3,  x_4,  x_5,  x_6,  x_7},
                      {-y_1,  y_0, -y_3,  y_2, -y_5,  y_4,  y_7, -y_6, 
                        x_1, -x_0,  x_3, -x_2,  x_5, -x_4, -x_7,  x_6},
                      {-y_2,  y_3,  y_0, -y_1, -y_6, -y_7,  y_4,  y_5, 
                        x_2, -x_3, -x_0,  x_1,  x_6,  x_7, -x_4, -x_5},
                      {-y_3, -y_2,  y_1,  y_0, -y_7,  y_6, -y_5,  y_4, 
                        x_3,  x_2, -x_1, -x_0,  x_7, -x_6,  x_5, -x_4},
                      {-y_4,  y_5,  y_6,  y_7,  y_0, -y_1, -y_2, -y_3,
                        x_4, -x_5, -x_6, -x_7, -x_0,  x_1,  x_2,  x_3},
                      {-y_5, -y_4,  y_7, -y_6,  y_1,  y_0,  y_3, -y_2, 
                        x_5,  x_4, -x_7,  x_6, -x_1, -x_0, -x_3,  x_2},
                      {-y_6, -y_7, -y_4,  y_5,  y_2, -y_3,  y_0,  y_1, 
                        x_6,  x_7,  x_4, -x_5, -x_2,  x_3, -x_0, -x_1},
                      {-y_7,  y_6, -y_5, -y_4,  y_3,  y_2, -y_1,  y_0, 
                        x_7, -x_6,  x_5,  x_4, -x_3, -x_2,  x_1, -x_0},
                      { 0  ,  0  ,  0  ,  0  ,  0  ,  0  ,  0  ,  0  , 
                        y_0,  y_1,  y_2,  y_3,  y_4,  y_5,  y_6,  y_7}}

\end{verbatim}

\vskip 2mm \noindent This map can be completed to an exact sequence, using the code  

\begin{verbatim}
    resolution minimalPresentation F
\end{verbatim}

\vskip 2mm \noindent Finally, to view the matrices associated with the differential, use the codes

\begin{verbatim}
    o3.dd_1
\end{verbatim}
,
\begin{verbatim}
    o3.dd_2
\end{verbatim}
and 
\begin{verbatim}
    o3.dd_2
\end{verbatim}

This exact sequence can be translated back to the framework of octonions as follows

\begin{equation}
    \begin{tikzcd}
0\arrow[r] & \Gamma(E_{-2}) \arrow[r, "\exd^{(2)}"] & \Gamma(E_{-1}) \arrow[r, "\exd^{(1)}"] & \Gamma(E_{0}) \arrow[r, "\rho"] & \Gamma(\underline{\OO^2})\arrow[r, "\mathrm{J}"]&\Gamma(\underline{\R\oplus\OO\oplus\R})
\end{tikzcd}
\end{equation}

where

\begin{align*}
    E_{0}:=\underline{\OO^2},\qquad
    E_{-1}:=\underline{\R\oplus\OO\oplus\R},\qquad
    E_{-2}:=\underline{\R}\,.
\end{align*}

and for sections $\begin{pmatrix}
    u \\
    v
\end{pmatrix}\in \Gamma(E_{0})$, $\begin{pmatrix}
    \mu \\
    a   \\
    \nu
\end{pmatrix}\in \Gamma(E_{-1})$ and $t\in \Gamma(E_{-2})$:
\begin{align}\nonumber
\rho\begin{pmatrix}
u \\
v
\end{pmatrix}&:=\begin{pmatrix}
\|x\|^2u+(x\!\cdot\!\overline{y})\!\cdot\!v-(\la x,u\ra+\la y,v\ra)x \\
\|y\|^2v+(y\!\cdot\!\overline{x})\!\cdot\!u-(\la x,u\ra+\la y,v\ra)y
\end{pmatrix}\,,\\
    \exd^{(1)}\begin{pmatrix}
    \mu \\
    a   \\
    \nu 
\end{pmatrix}&:=\begin{pmatrix}
    \mu x+a\!\cdot\!y \\
    \nu y+\overline{a}\!\cdot\!x
\end{pmatrix}\,,\\
    \exd^{(2)}(t)&:=\begin{pmatrix}
    -\|y\|^2t \\
    (x\!\cdot\!\overline{y})t   \\
    -\|x\|^2t
\end{pmatrix}\,.\nonumber
\end{align}

Finally, using Malgrange's flatness theorem, this remains exact as a sequence of modules over real analytic or smooth functions.

\end{document}